\documentclass[12pt,twoside]{amsart}

\usepackage{geometry}
\usepackage{amssymb, amsmath, amsthm, amscd}
\usepackage{enumerate}
\usepackage{mathrsfs}
\usepackage{bm}

\usepackage{graphicx}
\usepackage{hhline}
\usepackage[all]{xy}

\usepackage{xcolor}

\usepackage{url}

\usepackage{iftex}

\ifPDFTeX

  \usepackage[pdfencoding=auto, unicode]{hyperref}
\else
  \usepackage[dvipdfmx, unicode]{hyperref}
\fi

\hypersetup{
    colorlinks=false,
    linkcolor=blue,
    citecolor=blue,
    urlcolor=blue,
    bookmarksnumbered=true,
    pdfstartview=FitH
}

\title{Notes on acceptable bundles II} 
\author{Osamu Fujino, Taro Fujisawa, and Takashi Ono}
\date{2026/4/1, version 0.07}
\subjclass[2020]{Primary 32L10; Secondary 30J99}
\keywords{acceptable bundles, 
plurisubharmonic functions, parabolic structures, 
punctured disks, partially punctured polydisks, filtered bundles}
\address{Department of 
Mathematics, Graduate School of Science, 
Kyoto University, Kyoto 606-8502, Japan}
\email{fujino@math.kyoto-u.ac.jp}
\address{Department of Mathematics and Data Science,
Center for Liberal Arts and Sciences,
Tokyo Denki University, Tokyo, Japan}
\email{fujisawa@mail.dendai.ac.jp}
\address{Research Institute for Mathematical Sciences, Kyoto University, Kyoto 606-8502, Japan}
\email{takashio@kurims.kyoto-u.ac.jp}
\DeclareMathOperator{\Id}{Id}
\DeclareMathOperator{\ord}{ord}
\DeclareMathOperator{\rank}{rank}
\DeclareMathOperator{\Nak}{Nak}
\DeclareMathOperator{\dvol}{dvol}
\DeclareMathOperator{\End}{End}
\DeclareMathOperator{\Hom}{Hom}
\DeclareMathOperator{\Gr}{Gr}
\DeclareMathOperator{\re}{Re}
\DeclareMathOperator{\im}{Im}
\DeclareMathOperator{\Ker}{Ker}
\DeclareMathOperator{\Par}{\mathcal{P}\!\it{ar}}

\newtheorem{thm}{Theorem}[section]
\newtheorem{lem}[thm]{Lemma}
\newtheorem{cor}[thm]{Corollary}
\newtheorem{prop}[thm]{Proposition}

\newtheorem{claim}[thm]{Claim}

\theoremstyle{definition}
\newtheorem{defn}[thm]{Definition}
\newtheorem{rem}[thm]{Remark}

\newtheorem*{ack}{Acknowledgments}  
\newtheorem{step}{Step}
\newtheorem{say}[thm]{}

\makeatletter

\@addtoreset{equation}{section}
\makeatother

\begin{document}

\begin{abstract} 
The notion of acceptable bundles plays a fundamental role in the
Simpson--Mochizuki theory. We study acceptable bundles on a partially punctured polydisk in detail. While this article is primarily expository, it also presents new arguments that differ from those of Mochizuki. 
\end{abstract}

\maketitle 

\tableofcontents 

\section{Introduction}\label{x-sec1}

This paper is a continuation of \cite{fujino-fujisawa-ono}, where we
gave a detailed study of acceptable bundles on a punctured disk.
Here we extend the theory to higher-dimensional settings.
More precisely, we investigate acceptable bundles on a partially
punctured polydisk.

The notion of acceptable bundles plays a fundamental role in the
Simpson--Mochizuki theory; see, for example, 
\cite{simpson1}, \cite{simpson2}, \cite{mochizuki1}, 
\cite{mochizuki2}, \cite{mochizuki3}, \cite{mochizuki4}, 
and \cite{mochizuki5}.
It has also found important applications in the study of
higher-dimensional complex varieties.
For related developments, we refer the reader to, for example,
\cite{deng}, \cite{deng-cadorel}, \cite{deng-hao}, and \cite{kim}.

Throughout this paper, we freely use the results established in
\cite{fujino-fujisawa-ono}.
In particular, the theory of acceptable bundles over a punctured disk
developed there plays a decisive role in the present work.
As in \cite{fujino-fujisawa-ono}, one of the main purposes of this paper
is to make Mochizuki's theory of acceptable bundles
\cite[Chapter~21, Acceptable Bundles]{mochizuki4} more accessible to a
broader audience.

Although we use an $L^2$ extension theorem of Ohsawa--Takegoshi type as
a black box, we aim to present the theory of acceptable bundles in a
form that is as self-contained as possible and accessible at the level
of \cite{demailly1}.

\medskip

Let $E$ be a holomorphic vector bundle on the partially punctured
polydisk $(\Delta^{*})^{l} \times \Delta^{\,n-l}$, and let $h$ be a smooth
Hermitian metric on $E$.
We denote by $\omega_{P}$ the Poincar\'e metric on
$(\Delta^{*})^{l} \times \Delta^{\,n-l}$, defined by
\[
  \omega_{P}
  := \sum_{j=1}^{l}
       \frac{\sqrt{-1}\, dz_{j} \wedge d\overline{z}_{j}}
            {|z_{j}|^{2}\bigl(-\log |z_{j}|^{2}\bigr)^{2}}
     + \sum_{k=l+1}^{n}
       \frac{\sqrt{-1}\, dz_{k} \wedge d\overline{z}_{k}}
            {(1 - |z_{k}|^{2})^{2}} .
\]

We say that $(E,h)$ is \emph{acceptable} if the curvature
$\sqrt{-1}\Theta_{h}(E)$, which is a smooth $\Hom(E,E)$-valued $(1,1)$-form,
is bounded with respect to the metric induced by $h$ and $\omega_{P}$.

Let $\bm a = (a_{1},\dots,a_{l}) \in \mathbb{R}^{l}$.
We define an $\mathcal O_{\Delta^n}$-module ${}_{\bm a}E$ as follows.
For any open set $U \subset \Delta^{n}$, set
\[
  \Gamma(U, {}_{\bm a}E)
  :=
  \Bigl\{
     s \in \Gamma\!\bigl(U \cap ((\Delta^{*})^{l}\times \Delta^{\,n-l}), E\bigr)
     \,\Bigm|\,
     |s|_{h}
       = O\!\left(
           \frac{1}{\prod_{j=1}^{l} |z_{j}|^{\,a_{j}+\varepsilon}}
         \right)
     \text{ for every }\varepsilon>0
  \Bigr\}.
\]
We note that we sometimes write $\mathcal P_{\bm a} E$ instead of
${}_{\bm a} E$. Moreover, ${}_{\bm 0} E$ is usually denoted by 
${}^\diamond\!E$, where $\bm 0=(0, \ldots, 0)\in \mathbb R^l$.  

\medskip

One of the main results of this paper is the following.

\begin{thm}[{Prolongation by increasing orders, 
cf.~\cite[Theorem~21.3.1]{mochizuki4}}]
\label{x-thm1.1}
Let $(E, h)$ be an acceptable vector bundle on a partially punctured
polydisk $(\Delta^*)^l\times \Delta^{n-l}$.
Then ${}_{\bm a} E$ is a locally free sheaf on $\Delta^n$ for any
$\bm a \in \mathbb R^l$.
Moreover, the family $\left( {}_{\bm a} E \mid \bm a\in \mathbb R^l 
\right)$
naturally forms a filtered bundle.
\end{thm}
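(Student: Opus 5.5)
We first reduce to the case $\bm a=\bm 0$. Multiplying by $\prod_j |z_j|^{2a_j}$ changes $h$ to a metric $h_{\bm a}:=h\prod_{j=1}^l|z_j|^{2a_j}$. Its curvature differs from that of $h$ by $\partial\overline\partial\log|z_j|^2=0$ on the punctured polydisk, so $(E,h_{\bm a})$ is again acceptable. Since $|s|_{h_{\bm a}}=|s|_h\prod|z_j|^{a_j}$, we have ${}_{\bm a}(E,h)={}^\diamond(E,h_{\bm a})$. Hence it suffices to show that ${}^\diamond\!E$ is locally free for every acceptable $(E,h)$.

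The key input is a pointwise estimate. Following \cite{fujino-fujisawa-ono}, on each punctured disk slice (fixing all coordinates except $z_j$) acceptability gives a rank-$r$ frame adapted to the parabolic filtration and an explicit comparison of $h$ with a model metric $\bigoplus |z_j|^{2b}(-\log|z_j|^2)^{k}$. To pass to the polydisk we use the standard device: since $(E,h)$ is acceptable, for a suitable $C>0$ the metric $\tilde h=h\prod_j(-\log|z_j|^2)^{C}$ (or with negative exponent) makes $\sqrt{-1}\Theta_{\tilde h}$ Griffiths semipositive, respectively seminegative, relative to $\omega_P$. This gives that $\log|s|_{\tilde h}^2$ is plurisubharmonic for holomorphic sections $s$ of $E$ (respectively of $E^\vee$). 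With this, the Hartogs-type extension of bounded psh functions and the growth condition show that ${}^\diamond\!E$ is coherent and torsion free. Moreover, sections with moderate growth on slices extend in the remaining variables. The extension step uses the psh function to control the growth uniformly in the other variables.

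To get local freeness, we construct a frame near a point $P$ of $\Delta^n\setminus((\Delta^*)^l\times\Delta^{n-l})$. We restrict to a transversal slice: a curve, or inductively the subvariety $\{z_l=c\}$, where by induction on $l$ the bundle ${}_{\bm a}E$ is locally free and forms a filtered bundle. We then use the $L^2$ extension theorem of Ohsawa--Takegoshi type, with weight built from $\log|s|_{\tilde h}$ and $\prod(-\log|z_j|^2)$, to lift a frame of the restricted prolongation to sections of ${}^\diamond\!E$ near $P$ satisfying the needed growth bounds. We then check that these lifted sections generate. Here we compare $\det$: the section $\det$ of the lifted sections lies in ${}^\diamond\det E$, and $\det E$ is a line bundle whose prolongation is computed directly, since acceptability of a line bundle makes $\log|e|_h$ differ from $\sum a_j\log|z_j|$ by something $o(\log|z|)$. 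Nonvanishing of $\det$ along the divisor, together with torsion freeness and normality (Hartogs across codimension two), yields a frame. The filtered bundle properties then follow. These are ${}_{\bm a}E\subset{}_{\bm b}E$ for $\bm a\le\bm b$, ${}_{\bm a+\bm\delta}E={}_{\bm a}E$ for small $\bm\delta$ (from the $\varepsilon$ in the definition), ${}_{\bm a+\bm e_j}E={}_{\bm a}E\otimes\mathcal O(D_j)$, and compatibility of the induced filtrations on each $D_j$ being locally free. All of these are checked on the frame using the slice description from \cite{fujino-fujisawa-ono}.

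The main obstacle is the lifting/generation step: proving that the extended sections have the right growth $O(\prod|z_j|^{-\varepsilon})$ uniformly rather than only in $L^2$. I would first try to obtain this by combining the $L^2$ bound with the sub-mean value property of the psh function $\log|s|^2_{\tilde h}$ on Poincaré-sized polydisks. The bounded geometry of $\omega_P$ on such polydisks converts $L^2$ bounds into pointwise bounds up to factors of $(-\log|z_j|)^{N}$, which are absorbed by $|z_j|^{-\varepsilon}$.
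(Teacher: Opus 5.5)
There is a genuine gap in your lifting/generation step, and it is not the one you flag. Converting the $L^2$ bound of an Ohsawa--Takegoshi extension into a pointwise bound is routine (Lemma~\ref{x-lem7.2}, Corollary~\ref{x-cor7.5}). The real problem is that for $l\ge 2$, a frame of the prolongation on a slice through a point of $D_1\cap\cdots\cap D_l$ generally cannot be lifted into ${}^\diamond\!E$ at all. Such a curve meets all $D_i$ at once, so the weights add up. On the diagonal $Y=\{z_1=\cdots=z_l\}$, a frame element of ${}^\diamond\!E$ with weights $a_i(v_j)\in(-1,0]$ restricts to a section of weight $\sum_i a_i(v_j)\in(-l,0]$. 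Hence ${}^\diamond\!E|_Y\subsetneq{}^\diamond(E|_Y)$ as soon as some such sum is $\le -1$. For instance, take the trivial line bundle on $(\Delta^*)^2$ with $|1|_h=|z_1z_2|^{9/10}$, which is flat and hence acceptable. Then ${}^\diamond\!L=\mathcal O_{\Delta^2}$ but ${}^\diamond(L|_Y)=z^{-1}\mathcal O_Y$, so the generator of the slice prolongation has no lift.

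Correspondingly, the extension from $Y$ (Proposition~\ref{x-prop8.5}) loses $1-\frac1l$ in every exponent, not just an $\varepsilon$. It only gives sections of ${}_{(1-\frac1l+\varepsilon)\bm 1}E$, and no sub-mean-value argument recovers that loss. Slicing by $\{z_l=c\}$ does not help either. For $c\neq0$ the slice misses $D_l$ (any lift may be multiplied by $(z_l/c)^k$), and a hyperplane through $P$ mixes weights in the same way.

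Your determinant test is also miscalibrated. For a genuine frame of ${}^\diamond\!E$, $v_1\wedge\cdots\wedge v_r$ vanishes along $D_i$ as a section of ${}^\diamond\!\det E$ whenever $\sum_k a_i(v_k)\le -1$. Moreover, the prolongation of $\det E$ is not ``computed directly'' from acceptability. That is the whole line bundle case, which the paper proves in Section~\ref{x-sec11} by OT extension plus Lelong numbers; a priori you do not even have a frame of $\det E$ on $X^*$. Likewise, coherence of ${}^\diamond\!E$ does not follow from Hartogs-type arguments; in the paper it is a consequence of producing a frame.

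Two ingredients are missing. First, the parabolic weights of $E|_{\pi_i^{-1}(P)}$ must be shown to be independent of $P\in D_i^\circ$ (Theorem~\ref{x-thm12.1}, Lemma~\ref{x-lem13.1}). The paper does this via Lelong numbers, Siu's theorem and Lemma~\ref{x-lem5.7}. Without it, Proposition~\ref{x-prop7.4}, which needs one exponent valid on all slices, cannot convert slice-wise bounds into membership in ${}^\diamond\!E$. This is needed even for $l=1$ when $\alpha$ is itself a weight, since there OT only gives sections of ${}_{\alpha+\delta}E$.

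Second, when all weights along each $D_i$ lie in $(-\frac1l,0]$, the $1-\frac1l$ loss is harmless and lifting from the diagonal works (Proposition~\ref{x-prop13.2}). There, generation is proved through lifted dual frames and the holomorphic pairing matrix $(V_i\cdot W_j)$ rather than a determinant. The general case is reduced to this one by the cyclic cover $\psi_c$ with Diophantine approximation (Lemma~\ref{x-lem13.4}) and $\mu_c^l$-equivariant descent (Theorem~\ref{x-thm13.5}). The descent also supplies what your last step takes for granted: a single frame compatible with all ${}^i\!F$ simultaneously. This frame comes from the eigenspace decomposition of the abelian group $\mu_c^l$, and it is exactly what the filtered bundle condition of Theorem~\ref{x-thm1.3} needs.
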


We make a brief remark on the assumptions in Theorem~\ref{x-thm1.1}.

\begin{rem}\label{x-rem1.2}
In \cite[Theorem~21.3.1]{mochizuki4}, Mochizuki assumes for simplicity
that $(\det E, \det h)$ is flat.
\end{rem}
More precisely, we prove the following statement.

\begin{thm}\label{x-thm1.3}
Let $(E, h)$ be an acceptable vector bundle on a partially punctured
polydisk $(\Delta^*)^l \times \Delta^{n-l}$ with $\rank E = r$.
Fix $\bm a \in \mathbb R^l$.
Then there exists a sufficiently small open neighborhood
$U$ of the origin $(0,\ldots,0)$ in $\Delta^n$ such that
there are a local frame $\bm v = \{v_1,\ldots,v_r\}$ of ${}_{\bm a} E$
and vectors $\bm a(v_j) \in \mathbb R^l$ $(j=1,\ldots,r)$
with the following property:
for any $\bm b \in \mathbb R^l$, we have
\[
{}_{\bm b} E
=
\bigoplus_{j=1}^r
\mathcal O_U\!\left(
  \sum_{i=1}^l \lfloor b_i - a_i(v_j) \rfloor D_i
\right)\!\cdot v_j,
\]
where $D_i := \{z_i = 0\} \subset \Delta^n$ for each $i$.
\end{thm}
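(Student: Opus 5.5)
We will prove Theorem~\ref{x-thm1.3} by induction on $l$, the number of punctured directions, taking the one-variable theory of \cite{fujino-fujisawa-ono} as the base case. For each fixed $z'=(z_2,\dots,z_n)$ the restriction $(E,h)|_{\Delta^*\times\{z'\}}$ is acceptable on a punctured disk, and the constant in the curvature bound is independent of $z'$. By the results of \cite{fujino-fujisawa-ono}, the prolongation ${}_{a_1}(E|_{\Delta^*\times\{z'\}})$ is locally free and admits a frame compatible with the parabolic filtration. Moreover, the norm estimates there are uniform in terms of the acceptability constant.

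\textbf{Step 1: a uniform Cauchy/Hartogs-type estimate.} The first step is to establish the basic estimate for acceptable bundles. If $s$ is a holomorphic section of $E$ with
\[
|s|_h=O\Bigl(\prod_{j}|z_j|^{-a_j-\varepsilon}\Bigr),
\]
then $\log|s|_h^2+C\sum_j\log(-\log|z_j|^2)$ is plurisubharmonic for suitable $C$. This follows because $\sqrt{-1}\partial\overline\partial\log|s|_h^2\ge -\langle\sqrt{-1}\Theta_h s,s\rangle/|s|^2\ge -C\omega_P$, and $\omega_P=\sqrt{-1}\partial\overline\partial\sum -\log(-\log|z_j|^2)$ up to sign. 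Consequently, sub-mean-value estimates on polyannuli show that the growth orders behave semicontinuously in $z'$. We also obtain the following sharpening: the growth condition ``$O(|z|^{-a-\varepsilon})$ for all $\varepsilon$'' can be upgraded to $O(|z|^{-a}(-\log|z|)^N)$.

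\textbf{Step 2: constructing the frame.} By the inductive hypothesis applied to $(\Delta^*)^{l-1}\times\Delta^{n-l+1}$, after restricting to a slice $\{z_1=c\}$, and by the one-variable theory, we obtain sections on slices. The key step is to extend these to sections on $U\setminus D$ with the correct growth. For this we use the Ohsawa--Takegoshi $L^2$ extension theorem, applied to $E$ twisted by the weight
\[
\prod|z_j|^{2b_j}\Bigl(\prod(-\log|z_j|^2)\Bigr)^{M}.
\]
Acceptability makes this weight positive enough after adding $C\sum-\log(-\log|z_j|^2)$. We extend a frame of ${}_{\bm a}E$ along $D_1\cap\cdots\cap D_l$ from a generic point, or more concretely extend the one-dimensional frames from the curve $\Delta\times\{0\}$ to the polydisk. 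The $L^2$ condition, together with Step 1, gives pointwise bounds $|v_j|_h\lesssim\prod|z_i|^{-a_i(v_j)-\varepsilon}$.

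\textbf{Step 3: checking the frame.} To verify that the $v_j$ form a frame and that the decomposition holds for every $\bm b$, we compare $\det(v_1,\dots,v_r)$ with $\det h$. The bundle $(\det E,\det h)$ is acceptable of rank one, so $|\,\cdot\,|$ of a holomorphic frame of $\det E$ is $\prod|z_j|^{-c_j}$ up to factors of polynomial growth in $\log$. We then show that
\[
|v_1\wedge\cdots\wedge v_r|\ge c\prod|z_i|^{-\sum_j a_i(v_j)+\varepsilon}
\]
along generic slices, using the one-variable result that the restricted frame is \emph{adapted} (norm-splitting up to constants). Hartogs extension across codimension-two loci, together with this determinant comparison, shows that $v_1\wedge\cdots\wedge v_r$ is nowhere vanishing on $U$. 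The adaptedness also gives the norm comparison
\[
\Bigl|\sum f_jv_j\Bigr|\sim\sum|f_j||v_j|.
\]
From this the identity ${}_{\bm b}E=\bigoplus\mathcal O_U\bigl(\sum\lfloor b_i-a_i(v_j)\rfloor D_i\bigr)v_j$ follows by Laurent expansion in each $z_i$.

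\textbf{Main obstacle.} We expect the main obstacle to be proving adaptedness, i.e.\ the lower bound in the norm comparison uniformly in all $l$ directions simultaneously. The one-variable result only yields adaptedness slice by slice, with constants that may degenerate near deeper strata. The first approach we would try is to use the plurisubharmonicity from Step 1 applied to $\log$ of the norm of dual sections $v_j^\vee$ in $(E^\vee,h^\vee)$, which is also acceptable. The upper bounds on $|v_j^\vee|$, obtained by the same extension argument, then give the lower bounds for $v_j$, via the pairing $\langle v_j^\vee,v_k\rangle=\delta_{jk}$.
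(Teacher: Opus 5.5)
Your proposal has a genuine gap at the end of Step~2. There you assert that the $L^2$ extension together with Step~1 gives $|v_j|_h\lesssim\prod_i|z_i|^{-a_i(v_j)-\varepsilon}$; nothing in your argument delivers this.

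\textbf{Where Step~2 fails.} Ohsawa--Takegoshi controls the extension near the origin only if you extend from a curve in $X^*$ whose closure contains $0$.
\begin{itemize}
\item For $l\ge2$ the curve $\Delta\times\{0\}=\{z_2=\cdots=z_n=0\}$ lies in $D_2$, so $E$ is not defined on it.
\item A frame of ${}_{\bm a}E$ at a generic point of $D_1\cap\cdots\cap D_l$ presupposes what is to be proved.
\item The slices $\{z_1=c\}$, $c\neq0$, of your induction stay away from $D_1$. Extensions from them can be given any vanishing order in $z_1$, so they say nothing about a frame at $z_1=0$.
\end{itemize}
The curve that works is the diagonal $Y^*=\{z_1=\cdots=z_l,\ z_{l+1}=\cdots=z_n=0\}$. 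There, however, the weight $e^{-\psi-\phi_{\bm b}}$ of Proposition~\ref{x-prop8.5} restricts to $|z|^{2\sum_i b_i-2}$, so it only sees $\sum_i b_i$. Converting the ambient $L^2$ bound into growth via Corollary~\ref{x-cor7.5} then costs $1-\frac1l$ in every index: you only get $V_j\in{}_{\bm b+(1-\frac1l)\bm 1}E$. Even for $l=1$, extending a generator at a jump $\alpha$ a priori lands in ${}_{\alpha+\delta}E$, not in ${}_\alpha E$. You also never say where the multi-weights $\bm a(v_j)$ come from, i.e.\ a frame simultaneously split for all $l$ parabolic filtrations at $D_1\cap\cdots\cap D_l$.

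\textbf{Step~3 does not repair this.} Your claim that $\det E$ has a holomorphic frame of norm $\prod_j|z_j|^{-c_j}$ up to logarithmic factors is the theorem itself for line bundles with $l\ge2$. The codimension-two Hartogs argument would also need the $v_j$ to restrict to frames on \emph{every} slice and to be pointwise independent on $X^*$. An extension pinned down on a single curve gives neither.

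\textbf{What closes the gap in the paper.}
\begin{enumerate}
\item The parabolic weights of $E|_{\pi_i^{-1}(P)}$ are shown to be \emph{constant} in $P\in D_i^\circ$; the semicontinuity your Step~1 offers is not enough. Lelong numbers of the psh weights of $\det({}_\alpha E)$ and its dual, together with Siu's theorem and Lemma~\ref{x-lem5.7}, give constancy of $\gamma$ (Lemma~\ref{x-lem11.3}). The density argument of Lemma~\ref{x-lem9.2} upgrades this to the weights. Only then can Proposition~\ref{x-prop7.4} bring the extension from ${}_{\alpha+\delta}E$ back into ${}_\alpha E$ when $l=1$.
\item For $l\ge2$ the loss $1-\frac1l$ is harmless when $\Par({}^\diamond\!E,i)\subset(-\frac1l,0]$ for all $i$ (Proposition~\ref{x-prop13.2}). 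The general case is reduced to this by the cyclic cover $\psi_c$, with $c$ chosen by Diophantine approximation (Lemma~\ref{x-lem13.4}).
\item The abelian $\mu_c^l$-action on ${}_{\bm\eta}(\psi_c^{-1}E)|_{D_I}$ produces the simultaneous splitting. An equivariant frame gives the multi-weights $a_i(v_j)=\chi_i(p_i(v_j))$, and the frame descends via $\overline v_j=\prod_i z_i^{p_i(v_j)}v_j$.
\item The frame property comes from the pairing matrix $(V_i\cdot W_j)$ with extended dual sections, not from a determinant comparison. This matrix is holomorphic on $X$ because its entries lie in ${}_{1-\varepsilon}\mathcal O_{X^*}=\mathcal O_X$, and it equals the identity along the extension curve.
\end{enumerate}

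What you call the main obstacle, the lower bound via dual sections, is the easy part once such a frame exists; it is exactly the argument of Section~\ref{x-sec14}. Also, the psh function in Step~1 is $\log|s|_h^2-C\sum_j\log(-\log|z_j|^2)$, with a minus sign.
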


Theorem~\ref{x-thm1.1} implies that, for each $i=1,\ldots,l$ and
$b\in(a_i-1,a_i]$, the image
\[
{}^i\!F_b\bigl({}_{\bm a} E|_{D_i}\bigr)
\subset {}_{\bm a} E|_{D_i}
\]
of the natural morphism
${}_{\bm a(i,b)}E|_{D_i} \to {}_{\bm a}E|_{D_i}$
is a subbundle.
Here $\bm a(i,b)$ is defined by replacing the $i$-th component of
$\bm a$ by $b$.
The induced filtrations ${}^i\!F$ $(i=1,\ldots,l)$ on
${}_{\bm a} E|_{D_i}$ are mutually compatible. 
These filtrations are referred 
to as the \emph{parabolic filtrations 
associated with ${}_{\bm a}E$}. 

Let $\mathbf F = ({}^iF \mid i=1,\ldots,l)$ denote the resulting tuple of
filtrations.
Let $\bm v=\{v_1,\ldots,v_r\}$ be a local 
frame of ${}_{\bm a}E$ compatible with
$\mathbf F$ near the origin, and let 
\[ 
a_i(v_k)={}^i\!\deg^{\mathbf F}(v_k)
:=\deg ^{{}^i\!F}(v_k)
\] 
be the
corresponding weights.
We define
\[
v'_k := v_k \cdot \prod_{i=1}^l |z_i|^{a_i(v_k)} .
\]
Let $H(h,\bm v')$ be the Hermitian matrix-valued function whose $(p,q)$-entry
is given by $h(v'_p,v'_q)$.
The following weak norm estimate is a fundamental tool in the study of
acceptable bundles.

\begin{thm}[{Weak norm estimate, cf.~\cite[Theorem~21.3.2]{mochizuki4}}]
\label{x-thm1.4}
There exist positive constants $C$ and $N$ such that, in a neighborhood
of the origin,
\[
C^{-1}
\left(-\sum_{i=1}^l \log |z_i|\right)^{-N} I_r
\le
H(h,\bm v')
\le
C
\left(-\sum_{i=1}^l \log |z_i|\right)^{N} I_r .
\]
Here $I_r$ denotes the identity matrix of size $r$, and 
for Hermitian matrix-valued functions $A$ and $B$,
the notation $A\le B$ means that $B-A$ is positive semidefinite. 
\end{thm}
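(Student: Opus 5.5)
We prove the two inequalities separately. The upper bound comes from plurisubharmonicity and sub-mean-value arguments on each $v'_k$. The lower bound then follows by applying the upper bound to the dual bundle. Throughout we use Theorem~\ref{x-thm1.3}, so that ${}_{\bm b}E$ is described by the frame $\bm v$ and the weights $\bm a(v_k)$. We also use the one-dimensional results of \cite{fujino-fujisawa-ono}: the weak norm estimate and the description of parabolic weights for acceptable bundles on a punctured disk.

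\emph{Upper bound.} It suffices to bound each $|v'_k|_h^2$ by $C(-\sum_i\log|z_i|)^N$, because $H(h,\bm v')\le r\,\mathrm{diag}(|v'_k|_h^2)$. By acceptability, $\sqrt{-1}\partial\overline\partial\log|s|_h^2\ge -C\omega_P$ for any holomorphic section $s$. Hence
\[
\log|v_k|_h^2+\sum_i a_i(v_k)\log|z_i|^2 + C'\sum_{i\le l}\log(-\log|z_i|^2)
\]
is plurisubharmonic. By the definition of $a_i(v_k)$ as $\deg^{{}^iF}(v_k)$, and since $v_k\in{}_{\bm a(v_k)}E$, we have $|v'_k|_h=O(\prod_i|z_i|^{-\varepsilon})$ for every $\varepsilon>0$.

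We upgrade this growth estimate to a polylogarithmic one one variable at a time. Fix the variables other than $z_i$ and restrict to the punctured disk in $z_i$. The restricted bundle is still acceptable with a uniform curvature bound. The restricted section has weight $\le a_i(v_k)$ in the $z_i$-direction, and this holds uniformly in the other variables, by the compatibility of $\mathbf F$ and Theorem~\ref{x-thm1.3}. The one-variable weak norm estimate from \cite{fujino-fujisawa-ono} then gives $|z_i|^{a_i(v_k)}|v_k|_h\le C(-\log|z_i|)^{N}$, with constants depending continuously on the frozen variables. To make the constants uniform, we apply the sub-mean-value inequality to the plurisubharmonic function above on polydisks of Poincaré-bounded size. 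Alternatively, we apply a maximum-principle argument to $\log|v'_k|^2_h - N\sum_i\log(-\log|z_i|^2)$ on $\{|z_i|\ge\delta\}$, using the $O(|z|^{-\varepsilon})$ growth to rule out a maximum at the punctures; this is Phragmén–Lindelöf on each punctured disk. Iterating over $i=1,\dots,l$ produces the product bound, and $\prod_i(-\log|z_i|)^N\le(-\sum_i\log|z_i|)^{lN}$.

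\emph{Lower bound.} The dual $(E^\vee,h^\vee)$ is acceptable. The dual frame $\bm v^\vee$ is a frame of ${}_{<-\bm a+\bm 1}E^\vee$ compatible with the dual filtrations, with weights $-\bm a(v_k)$. This uses the standard duality of parabolic structures, which follows from Theorem~\ref{x-thm1.3} together with the fact that the pairing $\langle s,t\rangle$ of sections of ${}_{\bm b}E$ and ${}_{\bm c}E^\vee$ is meromorphic with the prescribed growth; the one-variable version is in \cite{fujino-fujisawa-ono}. Applying the upper bound to $E^\vee$ gives $H(h^\vee,\bm v'^\vee)\le C(\cdots)^N I_r$. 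Since $H(h^\vee,\bm v'^\vee)={}^tH(h,\bm v')^{-1}$, this yields the lower bound for $H(h,\bm v')$.

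\emph{Main obstacle.} The key difficulty is the uniformity in the transverse variables of the one-variable estimate near the intersections $D_i\cap D_j$. We must show that the restriction of $v_k$ to each slice has weight exactly $\le a_i(v_k)$, with constants that do not blow up as the other $z_j\to0$. The plan is to obtain this from the plurisubharmonic function with the $\log(-\log)$ correction: the sub-mean-value inequality over Poincaré-unit polydisks converts an $L^2$-type bound into a uniform pointwise bound. The $L^2$ bound itself comes from Theorem~\ref{x-thm1.3} via integrability of $|v'_k|^2\prod_i|z_i|^{2\varepsilon}\dvol$.
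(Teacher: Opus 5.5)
Your proposal follows the paper's proof. The upper bound is the slice-by-slice maximum principle: your ``alternatively'' route is exactly Lemma~\ref{x-lem7.3} iterated over $z_1,\dots,z_l$, i.e.\ Proposition~\ref{x-prop7.4}. The lower bound comes from running the same estimate on the dual frame, whose weights are $-a_i(v_j)$ by the curve case, and then inverting the Gram matrix.

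Two corrections are needed.

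\textbf{The uniformity obstacle.} The ``main obstacle'' you single out does not arise on the maximum-principle route, and the sub-mean-value plan you propose for it would not work.
\begin{itemize}
\item In Lemma~\ref{x-lem7.3}, the growth bound $O(|z_i|^{-a_i(v_k)-\varepsilon})$ on a slice $\pi_i^{-1}(P)$, $P\in D_i^\circ$, is used only to let $\log\bigl(H\,|z_i|^{2\varepsilon}\bigr)$ extend subharmonically across the puncture. So its constant may depend on $P$ and on $\varepsilon$.
\item The conclusion bounds interior values by values on $\{|z_i|=R'\}$. After iterating over $i$, everything is controlled by $\max|v_k|_h^2$ on $\{|z_1|=\cdots=|z_l|=R',\ |z_k|\le R'\ (k>l)\}$, which is a compact subset of $X^*$.
\item Hence neither the one-variable weak norm estimate nor any uniformity in the transverse variables is needed.
\item By contrast, averaging over Poincar\'e-unit polydisks against the finite integral of $|v'_k|_h^2\prod_i|z_i|^{2\varepsilon}$ only returns a pointwise bound that still contains the factor $\prod_i|z_i|^{-2\varepsilon}$. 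Its constant is not controlled as $\varepsilon\to 0$, so it cannot remove the $\varepsilon$-loss. You should drop that plan and commit to the maximum-principle argument.
\end{itemize}

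\textbf{A sign slip.} The plurisubharmonic function should be $\log|v_k|_h^2+\sum_i a_i(v_k)\log|z_i|^2-C'\sum_{i\le l}\log(-\log|z_i|^2)$. If you want it on the whole polydisk, also add $-C'\sum_{k>l}\log(1-|z_k|^2)$. The correcting term is $-\log(-\log|z_i|^2)$, which is plurisubharmonic (Lemma~\ref{x-lem5.6}), not $+\log(-\log|z_i|^2)$. Your later maximum-principle expression already has the correct sign.
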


We can translate various results on acceptable bundles over $\Delta^*$
to the setting of partially punctured polydisks
$(\Delta^*)^l \times \Delta^{n-l}$. 
Below we briefly explain some of these results for the reader's convenience.

\begin{thm}[{Dual 
bundles, see \cite[Theorem 1.12]{fujino-fujisawa-ono}}]\label{x-thm1.5}
Let $(E, h)$ be an acceptable vector bundle on a partially punctured
polydisk $(\Delta^*)^l \times \Delta^{n-l}$.
Then, for any $\bm a \in \mathbb R^l$, we have
\[
\bigl({}_{\bm a} E\bigr)^\vee
=
{}_{-\bm a + \bm 1 - \bm \varepsilon}\bigl(E^\vee\bigr),
\]
where $\bm 1 = (1,\ldots,1) \in \mathbb R^l$ and
$\bm \varepsilon = (\varepsilon,\ldots,\varepsilon) \in \mathbb R^l$ 
with $0<\varepsilon \ll 1$.
\end{thm}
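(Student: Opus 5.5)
The plan is to reduce everything to the frame provided by Theorem~\ref{x-thm1.3} and the weak norm estimate, Theorem~\ref{x-thm1.4}. Fix $\bm a$ and take a local frame $\bm v=\{v_1,\ldots,v_r\}$ of ${}_{\bm a}E$ near the origin, compatible with the parabolic filtrations $\mathbf F$, with weights $\bm a(v_k)$. Let $\bm v^\vee=\{v_1^\vee,\ldots,v_r^\vee\}$ be the dual frame of $E^\vee$ on $(\Delta^*)^l\times\Delta^{n-l}$. Then $(\bm v')^\vee$, the dual frame of $\bm v'$, is given by
\[
(v'_k)^\vee=v_k^\vee\cdot\prod_{i=1}^l|z_i|^{-a_i(v_k)}.
\]
The Gram matrix of $(\bm v')^\vee$ with respect to the dual metric $h^\vee$ is the transpose inverse $\overline{H(h,\bm v')}^{\,-1}$. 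Hence Theorem~\ref{x-thm1.4} gives the same two-sided bound, with $(-\sum\log|z_i|)^{\pm N}$, for $H(h^\vee,(\bm v')^\vee)$.

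In particular, $|v_k^\vee|_{h^\vee}$ is comparable to $\prod_i|z_i|^{a_i(v_k)}$ up to a factor $(-\sum_i\log|z_i|)^{\pm N}$. For $g=\sum_k g_k v_k^\vee$, the matrix bounds give
\[
|g|_{h^\vee}^2\asymp\sum_k|g_k|^2\prod_i|z_i|^{2a_i(v_k)},
\]
again up to such logarithmic factors. These factors are absorbed by the ``for every $\varepsilon>0$'' in the definition of the prolongation. We also note that $(E^\vee,h^\vee)$ is acceptable, since its curvature is $-{}^t\Theta_h$.

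Now we describe ${}_{\bm c}(E^\vee)$ with $\bm c=-\bm a+\bm 1-\bm\varepsilon$. A section $g$ lies in ${}_{\bm c}(E^\vee)$ iff each $g_k$ satisfies
\[
|g_k|=O\Bigl(\prod_i|z_i|^{-c_i-a_i(v_k)-\delta}\Bigr)\quad\text{for all }\delta>0.
\]
Since $g_k$ is holomorphic on the partially punctured polydisk, we use the standard one-variable (Riemann extension) argument in each $z_i$. It shows that this condition is equivalent to $g_k\in\mathcal O\bigl(\sum_i\lfloor c_i+a_i(v_k)\rfloor D_i\bigr)$, in the same way as in Theorem~\ref{x-thm1.3}. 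The weights satisfy $a_i(v_k)\in(a_i-1,a_i]$, so $c_i+a_i(v_k)=1-\varepsilon+a_i(v_k)-a_i\in(-\varepsilon,1-\varepsilon]$. Choosing $\varepsilon$ smaller than every positive gap $a_i-a_i(v_k)$, the floor equals $0$ exactly when $a_i(v_k)=a_i$. When $a_i(v_k)<a_i$, the number $1-\varepsilon+a_i(v_k)-a_i$ lies in $(-\varepsilon,1-\varepsilon)$, and I must check its floor carefully. It is $-1$ if $a_i-a_i(v_k)>1-\varepsilon$. That cannot happen for small $\varepsilon$, because $a_i-a_i(v_k)<1$ and there are finitely many weights. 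So it is $0$ there as well. Thus ${}_{\bm c}(E^\vee)=\bigoplus_k\mathcal O_U\,v_k^\vee$.

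Finally, since $\bm v$ is a frame of ${}_{\bm a}E$, the dual $({}_{\bm a}E)^\vee$ is the free module on $\bm v^\vee$ over $U$, viewed inside $E^\vee$ on the complement of the divisor. This gives the equality. Both sides are locally free and agree off $\bigcup D_i$, so it suffices to compare them near the origin as above. By translation, the same argument works at every point of $\Delta^n$ by recentring. The main obstacle is the bookkeeping in the floor computation above: the role of $\bm\varepsilon$ is exactly to turn the half-open interval $(a_i-1,a_i]$ into weights in $[0,1)$ for the dual. I would also cite \cite[Theorem~1.12]{fujino-fujisawa-ono} as the model for this computation.
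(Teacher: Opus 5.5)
Your proof is correct, but it is organized differently from the paper's.

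The paper proves the two inclusions separately. The inclusion ${}_{-\bm a+\bm 1-\bm\varepsilon}(E^\vee)\subset({}_{\bm a}E)^\vee$ comes straight from $|g(s)|\le|g|_{h^\vee}|s|_h$ and ${}_{\bm 1-\bm\varepsilon}(\mathcal O_{X^*})=\mathcal O_X$, with no estimate on frames at all. For the reverse inclusion, the paper restricts the dual frame $\bm v^\vee$ to the fibers $\pi_i^{-1}(P)$, $P\in D_i^\circ$, and uses the curve-case duality and weak norm estimate of \cite{fujino-fujisawa-ono} to get the fiberwise growth of $v_j^\vee$. It then spreads this bound over the polydisk with Proposition~\ref{x-prop7.4}.

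You instead invert the higher-dimensional weak norm estimate, Theorem~\ref{x-thm1.4}, to control $|g|_{h^\vee}$ from both sides by the coefficients of $g$ in $\bm v^\vee$. You then read off ${}_{\bm c}(E^\vee)$ by Riemann extension. This is legitimate in the paper's ordering, since Section~\ref{x-sec14} precedes Section~\ref{x-sec15}. Note, however, that the lower bound in Theorem~\ref{x-thm1.4} is proved in the paper by exactly the fiberwise dual-frame argument above. So you are using the same ingredients repackaged, and you need both sides of the weak norm estimate where the paper needs only upper bounds plus the maximum principle.

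In exchange, your computation gives more. It works for every $\bm c\in\mathbb R^l$ and shows
\[
{}_{\bm c}(E^\vee)=\bigoplus_k\mathcal O_U\bigl(\textstyle\sum_i\lfloor c_i+a_i(v_k)\rfloor D_i\bigr)\,v_k^\vee ,
\]
so $\bm v^\vee$ is an adapted frame of the filtered bundle ${}_\ast(E^\vee)$ with weights $-\bm a(v_k)$, and both inclusions come out at once.

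One slip in the floor computation. The smallness condition you actually need is $\varepsilon\le 1-(a_i-a_i(v_k))$ for all $i,k$, not that $\varepsilon$ lies below the positive gaps $a_i-a_i(v_k)$. With it, $c_i+a_i(v_k)\in[0,1-\varepsilon]$ and the floor is $0$ in every case, so the phrase ``exactly when $a_i(v_k)=a_i$'' should be dropped. Since the parabolic weights along $D_i$ do not depend on the point (Lemma~\ref{x-lem13.1}), this $\varepsilon$ can be chosen uniformly when you recenter at other points of $\Delta^n\setminus\bigl((\Delta^*)^l\times\Delta^{n-l}\bigr)$.
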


\begin{thm}[{Tensor 
products, see \cite[Theorem 1.14]{fujino-fujisawa-ono}}]\label{x-thm1.6}
Let $(E_1, h_1)$ and $(E_2, h_2)$ be acceptable vector bundles on a
partially punctured polydisk $(\Delta^*)^l \times \Delta^{n-l}$.
Then, for any $\bm b \in \mathbb R^l$, we have
\[
{}_{\bm b}(E_1 \otimes E_2)
=
\sum_{\bm a_1 + \bm a_2 \leq \bm b}
{}_{\bm a_1} E_1 \otimes {}_{\bm a_2} E_2.
\]
\end{thm}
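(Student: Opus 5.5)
The plan is to reduce the tensor product formula to the frame description of Theorem~\ref{x-thm1.3} together with the weak norm estimate, Theorem~\ref{x-thm1.4}. First note that $(E_1\otimes E_2, h_1\otimes h_2)$ is acceptable, because $\Theta_{h_1\otimes h_2}=\Theta_{h_1}\otimes\Id+\Id\otimes\Theta_{h_2}$. Hence every sheaf appearing in the statement is a locally free sheaf, by Theorem~\ref{x-thm1.1}. The problem is local, so we work near a point of $D_1\cap\cdots\cap D_l$; at other points fewer coordinates are punctured and the same argument applies with smaller $l$.

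\textbf{Easy inclusion $\supseteq$.} Take $s_1\in {}_{\bm a_1}E_1$ and $s_2\in{}_{\bm a_2}E_2$ with $\bm a_1+\bm a_2\le\bm b$. Then
\[
|s_1\otimes s_2|_{h_1\otimes h_2}=|s_1|_{h_1}|s_2|_{h_2}=O\Bigl(\prod_i|z_i|^{-(a_{1,i}+a_{2,i})-2\varepsilon}\Bigr)
\]
for every $\varepsilon>0$, so $s_1\otimes s_2\in{}_{\bm b}(E_1\otimes E_2)$. Since ${}_{\bm b}(E_1\otimes E_2)$ is an $\mathcal O$-module, the whole sum is contained in it.

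\textbf{Reverse inclusion $\subseteq$.} Fix $\bm b$ and apply Theorem~\ref{x-thm1.3} to $E_1$ and $E_2$. This gives frames $\bm v=\{v_j\}$ of ${}_{\bm b}E_1$ and $\bm w=\{w_k\}$ of ${}_{\bm b}E_2$, compatible with the parabolic filtrations, with weights $\bm a(v_j)$ and $\bm a(w_k)$. Theorem~\ref{x-thm1.4} then says that the rescaled frames $v'_j$ and $w'_k$ are comparable to orthonormal frames up to factors $(-\sum\log|z_i|)^{\pm N}$. Consequently $\{v'_j\otimes w'_k\}$ has Gram matrix
\[
H(h_1,\bm v')\otimes H(h_2,\bm w'),
\]
which is bounded above and below by polylog factors. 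It follows that $f=\sum f_{jk}\,v_j\otimes w_k$ lies in ${}_{\bm b}(E_1\otimes E_2)$ if and only if, for each pair $(j,k)$,
\[
|f_{jk}|\prod_i|z_i|^{-a_i(v_j)-a_i(w_k)}=O\Bigl(\prod_i|z_i|^{-b_i-\varepsilon}\Bigr)\quad\text{for all }\varepsilon>0.
\]
The polylog factors are absorbed by the $\varepsilon$. The same polylog control of the Gram matrix also handles the lower bound: the norm of $f$ dominates each coefficient term up to polylog factors, which is what gives the "only if" direction.

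Each $f_{jk}$ is holomorphic on the punctured region and has at most polynomial growth, so it is meromorphic along the $D_i$. The growth condition above is therefore equivalent to
\[
f_{jk}\in\mathcal O\Bigl(\sum_i\bigl\lfloor b_i-a_i(v_j)-a_i(w_k)\bigr\rfloor D_i\Bigr).
\]
To pass from this to the sum, write $m_i=\lfloor b_i-a_i(v_j)-a_i(w_k)\rfloor$ and set
\[
\bm a_1:=\bm a(v_j),\qquad \bm a_2:=\bigl(b_i-a_i(v_j)-m_i\bigr)_i .
\]
Then $\bm a_1+\bm a_2\le\bm b$. Moreover $\bm a_2-\bm a(w_k)$ has $i$-th component $b_i-a_i(v_j)-a_i(w_k)-m_i\ge 0$, so the multiplicity of $w_k$ in ${}_{\bm a_2}E_2$ is at least $0$, that is, $w_k\in{}_{\bm a_2}E_2$. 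Hence $f_{jk}\,v_j\otimes w_k = v_j\otimes (f_{jk}w_k)$, and using Theorem~\ref{x-thm1.3} for $E_2$, $f_{jk}w_k\in {}_{\bm a_2+\bm m'}E_2$ with the integer shifts arranged so that the sum of the indices is still $\le\bm b$. This proves the reverse inclusion.

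\textbf{Main obstacle.} The key step, and the one most likely to cause difficulty, is the norm equivalence for tensor frames. It needs a frame of ${}_{\bm b}E$ that is simultaneously compatible with all $l$ parabolic filtrations, which is exactly what Theorem~\ref{x-thm1.4} supplies. Once that is in place, the remaining bookkeeping with the floor functions is elementary.
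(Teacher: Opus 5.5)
Your argument is correct in substance, but it takes a different route from the paper's. The paper proves the reverse inclusion by restricting to the diagonal curve $Y=\{z_1=\cdots=z_l,\ z_{l+1}=\cdots=z_n=0\}$. It applies the one-variable tensor theorem \cite[Theorem~1.14]{fujino-fujisawa-ono} there to get equality near the origin, and then moves the center to other boundary points.

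You instead compute ${}_{\bm b}(E_1\otimes E_2)$ directly. Theorem~\ref{x-thm1.3} gives adapted frames $\bm v,\bm w$. Its weights automatically satisfy $a_i(v_j)\in(b_i-1,b_i]$ and equal ${}^i\!\deg^{\mathbf F}(v_j)$, so Theorem~\ref{x-thm1.4} applies to these very frames. The Kronecker product of the two-sided weak norm bounds then controls the Gram matrix of $\{v'_j\otimes w'_k\}$. A Laurent/Riemann-extension argument turns the growth condition into $f_{jk}\in\mathcal O(\sum_i m_iD_i)$ with $m_i=\lfloor b_i-a_i(v_j)-a_i(w_k)\rfloor$.

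This gives more than the statement. It shows ${}_{\bm b}(E_1\otimes E_2)=\bigoplus_{j,k}\mathcal O(\sum_i m_iD_i)\,v_j\otimes w_k$, so the tensor frame is adapted with weights $\bm a(v_j)+\bm a(w_k)$. It also avoids both the curve-case tensor theorem and the passage from the restriction to $Y$ back to the polydisk, which the paper handles in a single sentence. The price is that you rely on the lower bound in Theorem~\ref{x-thm1.4}, which in the paper rests on the duality argument of Section~\ref{x-sec14}. The paper's route is shorter because it recycles the curve case.

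The final bookkeeping needs fixing. With your choice $\bm a_2=(b_i-a_i(v_j)-m_i)_i$ you get $\bm a_1+\bm a_2=\bm b-\bm m$. This is \emph{not} $\le\bm b$ in any coordinate with $m_i<0$, i.e.\ whenever $a_i(v_j)+a_i(w_k)>b_i$, which always happens if $b_i\ge 2$. Your sentence about ``integer shifts'' ends up in the right place, but the clean version is as follows.

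Take $\bm a_1=\bm a(v_j)$ and $\bm a_2=\bm b-\bm a(v_j)$. By Theorem~\ref{x-thm1.3} for $E_2$, which holds for every index, the coefficient sheaf of $w_k$ in ${}_{\bm b-\bm a(v_j)}E_2$ is exactly $\mathcal O_U(\sum_i m_iD_i)$, and this contains $f_{jk}$. Hence $f_{jk}\,v_j\otimes w_k=v_j\otimes(f_{jk}w_k)\in{}_{\bm a(v_j)}E_1\otimes{}_{\bm b-\bm a(v_j)}E_2$, with $\bm a_1+\bm a_2=\bm b$ exactly.
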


\begin{thm}[{Hom bundles, 
see \cite[Proposition 17.1]{fujino-fujisawa-ono}}]\label{x-thm1.7}
Let $(E_1, h_1)$ and $(E_2, h_2)$ be acceptable vector bundles on a
partially punctured polydisk $(\Delta^*)^l \times \Delta^{n-l}$.
Then, for any $\bm a \in \mathbb R^l$, we have
\[
{}_{\bm a}\!\Hom(E_1, E_2)
=
\bigl\{
f \in \Hom_{\mathcal O_{(\Delta^*)^l \times \Delta^{n-l}}}(E_1, E_2)
\ \bigm|\
f({}_{\bm k} E_1) \subset {}_{\bm a + \bm k} E_2
\text{ for all } \bm k \in \mathbb R^l
\bigr\}.
\]
\end{thm}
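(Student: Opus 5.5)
We first identify the sheaf ${}_{\bm a}\Hom(E_1,E_2)$ with $\mathcal P_{\bm a}$ of the Hom bundle equipped with the induced metric $h_1^\vee\otimes h_2$. This bundle is acceptable, because its curvature is $\Theta_{h_2}\otimes \Id-\Id\otimes{}^t\Theta_{h_1}$, which is bounded with respect to $\omega_P$. Hence Theorems~\ref{x-thm1.1} and \ref{x-thm1.3} apply to $\Hom(E_1,E_2)$, as well as to $E_1$ and $E_2$. We prove the two inclusions separately, working near the origin and shrinking $U$ freely. The statement is local on $\Delta^n$, and both sides are sheaves that are determined outside a codimension-two set once we know they are reflexive. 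So we may work on a neighborhood of a general point of some $D_i$, or more conveniently we may argue directly with norm estimates.

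\emph{Inclusion $\subset$.} Let $f$ be a section of ${}_{\bm a}\Hom(E_1,E_2)$, so that $|f|_{h_1^\vee\otimes h_2}=O(\prod_i|z_i|^{-a_i-\varepsilon})$ for every $\varepsilon>0$. Let $s$ be a section of ${}_{\bm k}E_1$. Then
\[
|f(s)|_{h_2}\le |f|\,|s|_{h_1}=O\Bigl(\prod_i|z_i|^{-a_i-k_i-2\varepsilon}\Bigr)
\]
for every $\varepsilon>0$. Since $f(s)$ is holomorphic on the punctured domain, this means $f(s)\in{}_{\bm a+\bm k}E_2$. This direction is immediate.

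\emph{Inclusion $\supset$.} Suppose $f$ satisfies $f({}_{\bm k}E_1)\subset{}_{\bm a+\bm k}E_2$ for all $\bm k$. Fix $\bm k=\bm 0$, and take a frame $\bm v$ of ${}^\diamond\!E_1$ adapted as in Theorem~\ref{x-thm1.3}, with weights $\bm a(v_j)$. The vectors $\bm a(v_j)$ lie in $\prod(-1,0]$. Using Theorem~\ref{x-thm1.3} with $\bm b=\bm a(v_j)$, we see that $v_j\in{}_{\bm a(v_j)}E_1$, so that $f(v_j)\in{}_{\bm a+\bm a(v_j)}E_2$. That is, $|f(v_j)|_{h_2}=O(\prod|z_i|^{-a_i-a_i(v_j)-\varepsilon})$. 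Equivalently, $|f(v'_j)|_{h_2}=O(\prod|z_i|^{-a_i-\varepsilon})$ for the rescaled frame $v'_j=v_j\prod|z_i|^{a_i(v_j)}$.

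Now we invoke the weak norm estimate, Theorem~\ref{x-thm1.4}. By that estimate, the frame $\bm v'$ is orthonormal up to a factor $C(-\sum\log|z_i|)^{\pm N}$. Hence the operator norm of $f$ satisfies
\[
|f|\le C'\Bigl(-\sum_i\log|z_i|\Bigr)^{N'}\max_j|f(v'_j)|_{h_2},
\]
and the logarithmic factor is absorbed by $\prod|z_i|^{-\varepsilon}$. Therefore $|f|=O(\prod|z_i|^{-a_i-\varepsilon})$ for every $\varepsilon$, which shows $f\in{}_{\bm a}\Hom(E_1,E_2)$.

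The main point requiring care is this last step: we must pass from the growth estimates on the images of a specific frame to a bound on $|f|$, and the weak norm estimate is exactly what makes this possible. One must check that a single frame compatible with the filtrations $\mathbf F$ exists, and that the weights $a_i(v_j)$ appearing in Theorem~\ref{x-thm1.4} are the same ones that place $v_j$ in ${}_{\bm a(v_j)}E_1$. Both follow from Theorem~\ref{x-thm1.3} and the definition of ${}^i\!\deg^{\mathbf F}$.

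If one prefers to avoid compatibility with all $l$ filtrations simultaneously, there is an alternative. One restricts to the complement of the pairwise intersections $D_i\cap D_j$, where only one divisor is present, and applies \cite[Proposition~17.1]{fujino-fujisawa-ono} fiberwise. One then extends across codimension two by reflexivity (local freeness from Theorem~\ref{x-thm1.1}) together with Hartogs' theorem.
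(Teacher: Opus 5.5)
Your argument is correct, but it takes a different route from the paper's.

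\textbf{The paper's route.} The paper proves the nontrivial inclusion fiber by fiber. It restricts $f$ to the fibers $\pi_i^{-1}(P)$ with $P\in D_i^\circ$ and applies the curve-case result \cite[Proposition~17.1]{fujino-fujisawa-ono} there, which gives $\bigl|f|_{\pi_i^{-1}(P)}\bigr|=O(|z_i|^{-a_i-\varepsilon})$. It then uses Proposition~\ref{x-prop7.4} to turn these fiberwise bounds into a bound on a neighborhood, with the factors $(-\log|z_j|)^M$ absorbed by $\varepsilon$.

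\textbf{Your route.} You stay in several variables throughout. You take the adapted frame of Theorem~\ref{x-thm1.3}; its weights automatically lie in $\prod(-1,0]$ and agree with ${}^i\!\deg^{\mathbf F}$, as you note. You use the hypothesis only for the finitely many indices $\bm k=\bm a(v_j)$. You then convert the bounds on $f(v'_j)$ into a bound on $|f|$ using only the lower half of the weak norm estimate, Theorem~\ref{x-thm1.4}, and absorb the logarithmic factor into $\prod_i|z_i|^{-\varepsilon}$. This is not circular, since Section~\ref{x-sec14} does not use Theorem~\ref{x-thm1.7}.

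\textbf{What each route buys.} Your route does not need the one-dimensional Hom result. It also avoids transferring the hypothesis $f({}_{\bm k}E_1)\subset{}_{\bm a+\bm k}E_2$ to each fiber. In the paper's route that transfer tacitly uses the restriction compatibility $({}_{\bm k}E_1)|_{\pi_i^{-1}(P)}={}_{k_i}(E_1|_{\pi_i^{-1}(P)})$ from Theorem~\ref{x-thm12.1}. The cost is that you depend on the heavier multivariable Theorems~\ref{x-thm1.3} and~\ref{x-thm1.4}. The paper needs only the curve case plus the maximum-principle estimate of Proposition~\ref{x-prop7.4}.

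\textbf{Your closing alternative.} This is essentially the paper's route. One point needs filling in there: applying \cite[Proposition~17.1]{fujino-fujisawa-ono} fiberwise does not by itself give membership in ${}_{\bm a}\Hom(E_1,E_2)$ near points of $D_i^\circ$. Passing from fiberwise bounds to that membership needs Proposition~\ref{x-prop7.4}, or the frames of Theorem~\ref{x-thm12.1} together with a Laurent-expansion argument, before reflexivity and Hartogs can handle codimension two.
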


As a special case of Theorem~\ref{x-thm1.7}, we obtain the following
statement.

\begin{cor}[{see \cite[Proposition~21.3.3]{mochizuki4}}]\label{x-cor1.8}
Let $(E, h)$ be an acceptable vector bundle on a partially punctured
polydisk $(\Delta^*)^l \times \Delta^{n-l}$.
Then ${}^\diamond\!\End(E)$ is canonically isomorphic to the sheaf of
endomorphisms $f$ of ${}_{\bm a} E$, for any $\bm a \in \mathbb R^l$,
such that $f|_{D_i}$ preserves the filtration ${}^i\!F$
for each $i = 1, \ldots, l$.
\end{cor}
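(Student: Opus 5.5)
We prove Corollary~\ref{x-cor1.8} by specializing Theorem~\ref{x-thm1.7} to $E_1=E_2=E$, $h_1=h_2=h$ and $\bm a=\bm 0$. By definition ${}^\diamond\!\End(E)={}_{\bm 0}\End(E)={}_{\bm 0}\Hom(E,E)$, where $\End(E)$ carries the metric induced by $h$; it is acceptable because its curvature is $[\Theta_h(E),\cdot\,]$. Theorem~\ref{x-thm1.7} then identifies ${}^\diamond\!\End(E)$ with the sheaf of holomorphic endomorphisms $f$ of $E$ on the punctured part satisfying $f({}_{\bm k}E)\subset {}_{\bm k}E$ for all $\bm k\in\mathbb R^l$. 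It remains to show that, for a fixed $\bm a$, this condition is equivalent to: $f$ extends to an endomorphism of ${}_{\bm a}E$ and $f|_{D_i}$ preserves ${}^i\!F$ for each $i$. The identification is canonical because both sheaves are subsheaves of $j_*\End(E)$, where $j$ is the inclusion of the punctured polydisk.

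\emph{From $\bm k$-preservation to filtration preservation.} Taking $\bm k=\bm a$ shows that $f$ is an endomorphism of ${}_{\bm a}E$. Taking $\bm k=\bm a(i,b)$ with $b\in(a_i-1,a_i]$, we get $f({}_{\bm a(i,b)}E)\subset{}_{\bm a(i,b)}E$. Restricting to $D_i$ and taking images in ${}_{\bm a}E|_{D_i}$ gives $f|_{D_i}({}^i\!F_b)\subset{}^i\!F_b$.

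\emph{From filtration preservation to $\bm k$-preservation.} This is the main step, and we do it with the frame of Theorem~\ref{x-thm1.3}. Work near the origin (near other points of $\bigcup D_i$ the same argument applies after reordering coordinates). Take a frame $\bm v$ of ${}_{\bm a}E$ compatible with $\mathbf F$, with weights $a_i(v_j)\in(a_i-1,a_i]$, such that
\[
{}_{\bm b}E=\bigoplus_j\mathcal O_U\Bigl(\sum_i\lfloor b_i-a_i(v_j)\rfloor D_i\Bigr)v_j \quad\text{for all } \bm b.
\]
Write $f(v_q)=\sum_p f_{pq}v_p$ with $f_{pq}\in\mathcal O_U$. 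Preservation of ${}^i\!F$ on $D_i$ means that $f_{pq}|_{D_i}=0$ whenever $a_i(v_p)>a_i(v_q)$, i.e.\ $z_i\mid f_{pq}$. Hence
\[
f_{pq}\in\mathcal O\Bigl(-\sum_i\varepsilon_i^{pq}D_i\Bigr),\qquad \varepsilon_i^{pq}=\begin{cases}1 & \text{if } a_i(v_p)>a_i(v_q),\\ 0 & \text{otherwise.}\end{cases}
\]
Since $z_1,\dots,z_l$ are coprime, divisibility by each of them gives divisibility by their product. We must show
\[
\lfloor k_i-a_i(v_q)\rfloor-\varepsilon_i^{pq}\le\lfloor k_i-a_i(v_p)\rfloor \quad\text{for every } k_i.
\]
If $a_i(v_p)\le a_i(v_q)$, this holds with $\varepsilon_i^{pq}=0$. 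If $a_i(v_p)>a_i(v_q)$, then $0<a_i(v_p)-a_i(v_q)<1$, because both weights lie in $(a_i-1,a_i]$; so the two floors differ by at most one, which is absorbed by $\varepsilon_i^{pq}=1$. Therefore $f({}_{\bm k}E)\subset{}_{\bm k}E$ for every $\bm k$.

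\emph{Main obstacle.} The delicate point is that a single frame of ${}_{\bm a}E$ must be simultaneously compatible with all $l$ filtrations ${}^i\!F$ and describe every ${}_{\bm b}E$. Theorem~\ref{x-thm1.3}, together with the compatibility of the ${}^i\!F$ asserted after it, supplies exactly this, so no further analysis is needed beyond Theorem~\ref{x-thm1.7}.
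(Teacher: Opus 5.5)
Your proposal is correct and follows the paper's route. The paper also specializes Theorem~\ref{x-thm1.7} to $E_1=E_2=E$ with ${}^\diamond\!\End(E)={}_{\bm 0}\Hom(E,E)$ and calls the rest immediate; your frame computation via Theorem~\ref{x-thm1.3}, showing that preserving every ${}_{\bm k}E$ is equivalent to preserving ${}_{\bm a}E$ together with each ${}^i\!F$, is exactly the verification the paper leaves implicit.
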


As in \cite{fujino-fujisawa-ono}, we adopt the following convention
throughout this paper.

\begin{say}[Convention]\label{x-say1.9}
Let $\mathcal{F}$ be a sheaf on a topological space $X$.  
Unless explicitly stated otherwise, we write  
$f \in \mathcal{F}$ to indicate that $f$ is a local section  
$f \in \mathcal{F}(U)$ over some open subset $U \subset X$.  

In this paper, we do not distinguish between holomorphic vector bundles  
on a complex manifold $X$ and the corresponding locally free  
$\mathcal{O}_X$-modules. These are treated as equivalent  
unless stated otherwise.
\end{say}

This paper is organized as follows.
In Section~\ref{x-sec2}, we introduce the notion of acceptable bundles on
complex manifolds. 
In Section~\ref{x-sec3}, we 
recall basic notions concerning increasing 
$\mathbb R$-indexed filtrations on vector spaces and vector bundles. 
Section~\ref{x-sec4} is devoted to a brief review of filtered bundles in
the sense of Mochizuki.
In Section~\ref{x-sec5}, we recall basic definitions and properties of
plurisubharmonic functions for the sake of completeness.
Section~\ref{x-sec6} discusses fundamental properties of acceptable
bundles on partially punctured polydisks, and Section~\ref{x-sec7} is
devoted to several preliminary estimates in this setting.
In Section~\ref{x-sec8}, we explain a special case of the
Ohsawa--Takegoshi $L^2$ extension theorem.
Section~\ref{x-sec9} reviews results on acceptable bundles over a
punctured disk, following \cite{fujino-fujisawa-ono}.
In Section~\ref{x-sec10}, we study the behavior of acceptable vector
bundles over a punctured disk under pull-back by cyclic coverings.

Sections~\ref{x-sec11} and~\ref{x-sec12} form the technical core of this
paper.
In these sections, we develop the theory of prolongations of acceptable
line bundles and vector bundles on
$\Delta^*\times \Delta^{n-1}$, which provides the essential ingredients
for the proofs of the main results given in the subsequent sections.
Finally, in Section~\ref{x-sec13}, we prove one of the main results of
this paper, namely Theorem~\ref{x-thm1.1}, and in
Section~\ref{x-sec14} we establish the weak norm estimate stated in
Theorem~\ref{x-thm1.4}.
In the final section, Section~\ref{x-sec15}, 
we establish
Theorems~\ref{x-thm1.5}, \ref{x-thm1.6}, and~\ref{x-thm1.7}, together with
Corollary~\ref{x-cor1.8}, completing the proofs of the remaining results
via a systematic reduction to the curve case.

\begin{ack}\label{x-ack}
The first author was partially 
supported by JSPS KAKENHI Grant Numbers 
JP20H00111, JP21H00974, JP21H04994, JP23K20787. 
The third author was supported by 
JSPS KAKENHI Grant Number JP24KJ1611. 
\end{ack}

\section{Acceptable bundles on a complex manifold}\label{x-sec2}

Although our main interest lies in acceptable bundles on a partially  
punctured polydisk, we begin by recalling the general framework.

\medskip

Let $X$ be a complex manifold with $\dim_{\mathbb{C}} X = n$, and let  
$D = \sum_{i \in I} D_i$ be a simple normal crossing divisor on $X$.

\begin{defn}[Admissible coordinates, {\cite[Definition~4.1]{mochizuki1}}]\label{x-def2.1}
Let $P \in X$, and let $D_{i_j}$ ($j=1,\dots,l$) be the components of $D$ passing through $P$.  
An \emph{admissible coordinate system} around $P$ is a pair $(\mathcal{U},\varphi)$ satisfying:
\begin{itemize}
  \item $\mathcal{U}$ is an open neighborhood of $P$ in $X$;
  \item $\varphi$ is a holomorphic isomorphism  
  \[
     \varphi : \mathcal{U} \xrightarrow{\ \sim\ } \Delta^n:=\{(z_1,\dots,z_n)\mid |z_i|<1\},
  \]
  such that $\varphi(P) = (0,\dots,0)$ and  
  $\varphi(D_{i_j})=\{z_j=0\}$ for each $j=1,\dots,l$.
\end{itemize}
\end{defn}

Let $(E,h)$ be a holomorphic vector bundle on $X\setminus D$ equipped with a smooth Hermitian metric $h$.  
Given a collection of real numbers $\bm{\alpha} = (\alpha_i)_{i\in I}\in \mathbb{R}^I$, we recall the notion of prolongation.

\begin{defn}[Prolongation 
by increasing orders, 
{\cite[Definition~4.2]{mochizuki2}}]\label{x-def2.2}
Let $U \subset X$ be open, and let $s \in \Gamma(U\setminus D, E)$ be a section.  
We say that the increasing order of $s$ is at most $\bm{\alpha}$ if the following holds:
\begin{itemize}
  \item For every $P\in U$, choose an admissible coordinate system $(\mathcal{U},\varphi)$ around $P$.  
  Then for every $\varepsilon>0$ there exists a constant $C>0$ such that on $\mathcal{U}$,
  \[
    |s|_h \le \frac{C}{\prod_{j=1}^l |z_j|^{\alpha_{i_j}+\varepsilon}}.
  \]
\end{itemize}
In this case we write $-\ord(s)\le \bm{\alpha}$.
\end{defn}

For $\bm{\alpha}\in\mathbb{R}^I$, we define an $\mathcal{O}_X$-module ${}_{\bm{\alpha}}E$ by setting
\[
  \Gamma(U, {}_{\bm{\alpha}}E) := 
  \{\, s \in \Gamma(U\setminus (U\cap D), E)\mid -\ord(s)\le \bm{\alpha}\,\}
\] 
for any open subset $U\subset X$. 
The sheaf ${}_{\bm{\alpha}}E$ is called the \emph{prolongation} of $E$ of increasing order $\bm{\alpha}$.

\begin{defn}[Poincar\'e metric]\label{x-def2.3}
On
\[
(\Delta^*)^l \times \Delta^{n-l}
= \{ (z_1,\dots,z_n)\in\mathbb{C}^n \mid |z_i|<1\ \text{for all }i,\ z_j\neq 0\ \text{for }j\le l\},
\]
the \emph{Poincar\'e metric} is defined by
\[
  \omega_P := \sum_{j=1}^l 
  \frac{\sqrt{-1}\,dz_j\wedge d\overline{z}_j}{|z_j|^2(-\log|z_j|^2)^2}
  +\sum_{k=l+1}^n \frac{\sqrt{-1}\,dz_k\wedge d\overline{z}_k}{(1-|z_k|^2)^2}.
\]
Equivalently,
\[
\omega_P = -\sqrt{-1}\,\partial\overline{\partial}
 \log\!\left(
    \prod_{j=1}^l (-\log|z_j|^2)\,
    \prod_{k=l+1}^n (1-|z_k|^2)
 \right).
\]
\end{defn}

Let $P\in X$, and choose an admissible coordinate system $(\mathcal{U},\varphi)$ around $P$.  
Via the isomorphism
\[
\varphi \colon \mathcal{U}\setminus D \xrightarrow{\sim} (\Delta^*)^l \times \Delta^{n-l},
\]
we pull back the Poincar\'e metric to obtain a Hermitian metric $g_{\mathbf{P}}$ on $\mathcal{U}\setminus D$.

Given the Hermitian metric $h$ on $E$ and the Poincaré metric $g_{\mathbf{P}}$ on $T_{\mathcal{U}\setminus D}$,  
we equip $\Hom(E, E)\otimes \Omega^{p,q}$ with the induced Hermitian metric 
$(\cdot,\cdot)_{h,g_{\mathbf{P}}}$ on $\mathcal{U}\setminus D$.

\begin{defn}[Acceptable bundles, {\cite[Definition~4.3]{mochizuki1}}]\label{x-def2.4}
Let $(E,h)$ be a holomorphic vector bundle on $X \setminus D$ equipped with a 
smooth Hermitian metric $h$.  
Let $D_h = D'_h + \bar\partial$ denote its \emph{Chern connection}. 
The curvature form of $(E,h)$ is defined by
\[
  \sqrt{-1}\,\Theta_h(E) := \sqrt{-1}\,D_h^2,
\]
which is a smooth $\Hom(E,E)$-valued $(1,1)$-form on $X \setminus D$.

We say that $(E,h)$ is \emph{acceptable at $P$} if, for an admissible coordinate system 
$(\mathcal{U},\varphi)$ around $P$, the norm of the curvature 
$\sqrt{-1}\,\Theta_h(E)$ with respect to $(\cdot,\cdot)_{h,g_{\mathbf{P}}}$ is bounded on 
$\mathcal{U} \setminus D$.

If $(E,h)$ is acceptable at every point of $X$, then we simply call it \emph{acceptable}.
\end{defn}

\section{On Filtrations}\label{x-sec3}

In this section, we recall basic notions concerning increasing
$\mathbb R$-indexed filtrations on vector spaces and vector bundles,
following \cite{mochizuki1} and \cite{mochizuki3}.

\begin{defn}[{cf.~\cite[Definition 4.1]{mochizuki3}}]\label{x-def3.1}
Let $V$ be a finite-dimensional vector space.
An ({\em{increasing}}) {\em{filtration}} $F$ of $V$ indexed by $\mathbb R$
is a family of subspaces
\[
\{F_\eta \mid \eta\in\mathbb R\}
\]
satisfying the following conditions:
\begin{itemize}
\item $F_\eta \subset F_{\eta'}$ for $\eta \le \eta'$;
\item $F_\eta = V$ for any sufficiently large $\eta$.
\end{itemize}

When considering a tuple of filtrations, we write
\[
\mathbf F = ({}^i\!F \mid i\in I).
\]
For each $i\in I$, we denote by ${}^i\!\Gr^{\mathbf F}$ the graded space
$\Gr^{{}^i\!F}$.

For a nonzero vector $v\in V$, we define
\[
\deg^F(v) := \min\{\eta\in\mathbb R \mid v\in F_\eta\}.
\]

A basis $\bm v=\{v_1,\ldots,v_r\}$ of $V$ is said to be
\emph{compatible with the filtration $F$} if there exists a decomposition
\[
\bm v = \bigsqcup_{\lambda\in\mathbb R} \bm v_\lambda
\]
such that, for each $\lambda\in\mathbb R$,
the subset $\bm v_\lambda$ consists of vectors contained in $F_\lambda$
and induces a basis of the graded piece $\Gr^F_\lambda V$.
\end{defn}

\begin{defn}[{cf.~\cite[Definition 4.2]{mochizuki3}}]\label{x-def3.2}
Let
\[
\mathbf F = ({}^i\!F \mid i\in I)
\]
be a tuple of $\mathbb R$-indexed filtrations of $V$.
The tuple $\mathbf F$ is said to be \emph{compatible} if there exists
a direct sum decomposition
\[
V = \bigoplus_{\bm\eta\in\mathbb R^I} U_{\bm\eta}
\]
such that
\begin{equation}\label{x-eq3.1}
{}^I\!F_{\bm\rho}
:= \bigcap_{i\in I} {}^i\!F_{\rho_i}
= \bigoplus_{\bm\eta\le \bm\rho} U_{\bm\eta}
\end{equation}
for all $\bm\rho=(\rho_i)_{i\in I}\in\mathbb R^I$.
Here, $\bm\eta\le\bm\rho$ means $\eta_i\le\rho_i$ for all $i\in I$.

Any decomposition satisfying \eqref{x-eq3.1} is called a
\emph{splitting} of the compatible tuple $\mathbf F$.
\end{defn}

From now on, we consider filtrations on vector bundles.

\begin{defn}[{cf.~\cite[Definition 4.8]{mochizuki3}}]\label{x-def3.3}
Let $X$ be a complex manifold and let $V$ be a vector bundle on $X$.
A filtration $F$ of $V$ indexed by $\mathbb R$
is a family of subbundles  
\[
\{F_\eta \subset V \mid \eta\in\mathbb R\}
\]
such that $F_\eta\subset F_{\eta'}$ for $\eta\le\eta'$ and
$F_\eta=V$ for $\eta\gg 0$.

Let
\[
\mathbf F = ({}^i\!F \mid i\in I)
\]
be a tuple of filtrations of $V$.
For a point $P\in X$, the induced tuple of filtrations on the fiber
$V|_P$ is denoted by $\mathbf F|_P$.
\end{defn}

To treat parabolic filtrations, we introduce the following notions.

\begin{defn}[{cf.~\cite[Definition 3.12]{mochizuki2}}]\label{x-def3.4}
Let $X$ be a complex manifold and let $V$ be a vector bundle on $X$.
Let
\[
Y=\sum_{i\in I} Y_i
\]
be a simple normal crossing divisor on $X$.
For each $i\in I$, let ${}^i\!F$ be a filtration of $V|_{Y_i}$
in the sense of Definition~\ref{x-def3.3}.
The tuple of filtrations
\[
\mathbf F=\bigl({}^i\!F \mid i\in I\bigr)
\]
is said to be \emph{compatible} if, for any subset $J\subset I$,
there exists, locally on
\[
Y_J:=\bigcap_{j\in J} Y_j,
\]
a direct sum decomposition
\[
V|_{Y_J}=\bigoplus_{\bm\eta\in\mathbb R^J} U_{\bm\eta}
\]
such that
\[
{}^J\!F_{\bm\rho}
:=\bigcap_{j\in J} {}^j\!F_{\rho_j}\big|_{Y_J}
=
\bigoplus_{\bm\eta\le \bm\rho} U_{\bm\eta}
\]
holds for all $\bm\rho\in\mathbb R^J$.
\end{defn}

\begin{defn}[{cf.~\cite[Definition 2.16]{mochizuki1}}]\label{x-def3.5}
Let $X$ be a complex manifold, $V$ a vector bundle on $X$,
and $Y\subset X$ a complex submanifold.
Let $F$ be a filtration of $V|_Y$ in the sense of
Definition~\ref{x-def3.3}.

A smooth section $f$ of $V$ is said to be
\emph{compatible with the filtration $F$} if the value
\[
\deg^{F|_P}(f(P))
\]
is independent of the point $P\in Y$.
In this case, we define
\[
\deg^F(f) := \deg^{F|_P}(f(P))
\]
for any $P\in Y$.
\end{defn}

\begin{defn}[{cf.~\cite[Definition 2.17]{mochizuki1}}]\label{x-def3.6}
Let $\bm v=\{v_1,\ldots,v_r\}$ be a smooth frame of $V$.
It is said to be \emph{compatible with the filtration $F$ of $V|_Y$} if
the following conditions are satisfied:
\begin{itemize}
\item[(1)] Each $v_i$ is compatible with $F$ in the sense of
Definition~\ref{x-def3.5};
\item[(2)] For any point $P\in Y$, the frame $\bm v|_P$
is compatible with the filtration $F|_P$
in the sense of Definition~\ref{x-def3.1}. 
\end{itemize} 

Let
\[
Y=\sum_{i\in I} Y_i
\]
be a simple normal crossing divisor on $X$.
For each $i\in I$, let ${}^i\!F$ be a filtration of $V|_{Y_i}$.
The smooth frame $\bm v=\{v_1,\ldots,v_r\}$ of $V$ is said to be
\emph{compatible with the tuple of filtrations}
\[
\mathbf F=\bigl({}^i\!F \mid i\in I\bigr)
\]
if $\bm v$ is compatible with ${}^i\!F$ for every $i\in I$. 
\end{defn}

\section{Filtered bundles}\label{x-sec4}

In this section, we briefly review the notion of filtered bundles in the
sense of Mochizuki, following 
\cite[2.3 Filtered bundles]{mochizuki5}. 

\begin{defn}[{Filtered bundles in the local case, cf.~\cite[2.3.1]{mochizuki5}}]\label{x-def4.1}
Let $U$ be an open neighborhood of $(0,\ldots,0)$ in $\mathbb C^n$.
We set
\[
D_{U,i} := U \cap \{ z_i = 0 \}, \qquad
D_U := \bigcup_{i=1}^l D_{U,i},
\]
where $1 \le l \le n$.
Let $\mathcal V$ be a locally free $\mathcal O_U(\ast D_U)$-module.

A \emph{filtered bundle} $\mathcal P_\ast \mathcal V$ of $\mathcal V$
is a family of locally free $\mathcal O_U$-submodules
$\mathcal P_{\bm a} \mathcal V$ indexed by $\bm a \in \mathbb R^l$
satisfying the following conditions:
\begin{itemize}
\item[(1)]
If $\bm a \le \bm b$ (i.e., $a_i \le b_i$ for all $i=1,\ldots,l$), then
\[
\mathcal P_{\bm a} \mathcal V \subset \mathcal P_{\bm b} \mathcal V .
\]

\item[(2)]
There exists a frame
$\bm v = \{v_1,\ldots,v_r\}$ of $\mathcal V$
and vectors $\bm a(v_j) \in \mathbb R^l$ ($j=1,\ldots,r$) such that,
for any $\bm b \in \mathbb R^l$, we have
\begin{equation}\label{x-eq4.1}
\mathcal P_{\bm b} \mathcal V
=
\bigoplus_{j=1}^r
\mathcal O_U \!\left(
\sum_{i=1}^l
\lfloor b_i - a_i(v_j) \rfloor \, D_{U,i}
\right)
\cdot v_j .
\end{equation}
\end{itemize}
\end{defn}

Let $X$ be a complex manifold with a simple normal crossing divisor $D$.
Let
\[
D = \bigcup_{i \in \Lambda} D_i
\]
be the irreducible decomposition of $D$.
For any point $P \in D$, a holomorphic coordinate neighborhood
$(X_P, z_1, \ldots, z_n)$ around $P$ is called \emph{admissible} 
(see Definition \ref{x-def2.1}) if
\[
D_P := D \cap X_P = \bigcup_{i=1}^{l(P)} \{ z_i = 0 \}.
\]

For such an admissible coordinate neighborhood, there exists a uniquely
determined map
\[
\rho_P \colon \{1,\ldots,l(P)\} \longrightarrow \Lambda
\]
such that
\[
D_{\rho_P(i)} \cap X_P = \{ z_i = 0 \}.
\]
We define a map
\[
\kappa_P \colon \mathbb R^{\Lambda} \longrightarrow \mathbb R^{l(P)}
\]
by
\[
\kappa_P(\bm a) :=
\bigl( a_{\rho_P(1)}, \ldots, a_{\rho_P(l(P))} \bigr).
\]

\begin{defn}[{Filtered bundles, cf.~\cite[2.3.3]{mochizuki5}}]
\label{x-def4.2}
Let $\mathcal V$ be a locally free $\mathcal O_X(\ast D)$-module.
A \emph{filtered bundle}
\[
\mathcal P_\ast \mathcal V
=
\left(
\mathcal P_{\bm a} \mathcal V \mid \bm a \in \mathbb R^{\Lambda}
\right)
\]
of $\mathcal V$ is a family of locally free $\mathcal O_X$-submodules
$\mathcal P_{\bm a} \mathcal V \subset \mathcal V$
satisfying the following conditions:
\begin{itemize}
\item[(1)]
For any $P \in D$, take an admissible coordinate neighborhood
$(X_P, z_1, \ldots, z_n)$ around $P$.
Then, for any $\bm a \in \mathbb R^{\Lambda}$,
the restriction $\mathcal P_{\bm a} \mathcal V|_{X_P}$
is determined only by $\kappa_P(\bm a)$.
We denote it by
\[
\mathcal P^{(P)}_{\kappa_P(\bm a)}
\bigl( \mathcal V|_{X_P} \bigr).
\]

\item[(2)]
The family
\[
\left(
\mathcal P^{(P)}_{\bm b}
\bigl( \mathcal V|_{X_P} \bigr)
\ \middle|\ 
\bm b \in \mathbb R^{l(P)}
\right)
\]
is a filtered bundle over $\mathcal V|_{X_P}$ in the sense of
Definition~\ref{x-def4.1}.
\end{itemize}
\end{defn}

For any subset $I \subset \Lambda$, let $\bm\delta_I \in \mathbb R^\Lambda$
be the element whose $j$-th component is $1$ for $j \in I$ and $0$ for
$j \in \Lambda \setminus I$.
We set
\[
D_I := \bigcap_{i \in I} D_i,
\qquad
\partial D_I :=
D_I \cap \left( \bigcup_{j \in \Lambda \setminus I} D_j \right).
\]

Let $\mathcal P_\ast \mathcal V$ be a filtered bundle on $(X,D)$.
Fix $i \in \Lambda$ and $\bm a \in \mathbb R^\Lambda$.
For any $b$ satisfying $a_i-1 \le b \le a_i$, we set
\[
\bm a(b,i) := \bm a + (b-a_i)\bm\delta_i .
\]
We define
\[
{}^i\!F_b
\bigl( \mathcal P_{\bm a} \mathcal V|_{D_i} \bigr)
:=
\mathcal P_{\bm a(b,i)} \mathcal V
\big/
\mathcal P_{\bm a(a_i-1,i)} \mathcal V .
\]
It is naturally a locally free $\mathcal O_{D_i}$-module and can be
regarded as a subbundle of
$\mathcal P_{\bm a} \mathcal V|_{D_i}$.
In this way, we obtain a filtration ${}^i\!F$ of
$\mathcal P_{\bm a} \mathcal V|_{D_i}$ indexed by the interval
$(a_i-1,a_i]$.
If there is no risk of confusion, we simply write $F$.

For $I \subset \Lambda$ and $i \in I$, the filtrations ${}^i\!F$ induce a
filtration on $\mathcal P_{\bm a} \mathcal V|_{D_I}$.
Let $\bm a_I \in \mathbb R^I$ be the image of $\bm a$ under the natural
projection $\mathbb R^\Lambda \to \mathbb R^I$, and set
\[
(\bm a_I - \bm\delta_I, \bm a_I]
:= \prod_{i \in I} (a_i-1,a_i].
\]
For any $\bm b \in (\bm a_I - \bm\delta_I, \bm a_I]$, we define
\[
{}^I\!F_{\bm b}
\bigl( \mathcal P_{\bm a} \mathcal V|_{D_I} \bigr)
:=
\bigcap_{i \in I}
{}^i\!F_{b_i}
\bigl( \mathcal P_{\bm a} \mathcal V|_{D_I} \bigr).
\]

By Definition~\ref{x-def4.1}, the following compatibility holds.
\begin{itemize}
\item
Let $P$ be a point of $D_I$.
There exists an open neighborhood $X_P$ of $P$ in $X$ and a
(non-canonical) decomposition
\[
\mathcal P_{\bm a} \mathcal V|_{D_I \cap X_P}
=
\bigoplus_{\bm b \in (\bm a_I-\bm\delta_I,\bm a_I]}
\mathcal G_{P,\bm b}
\]
such that, for any
$\bm c \in (\bm a_I-\bm\delta_I,\bm a_I]$, we have
\begin{equation}\label{x-eq4.2}
{}^I\!F_{\bm c}
\bigl( \mathcal P_{\bm a} \mathcal V|_{D_I \cap X_P} \bigr)
=
\bigoplus_{\bm b \le \bm c}
\mathcal G_{P,\bm b}.
\end{equation}

Indeed, there exists a frame
$\bm v=\{v_1,\ldots,v_r\}$ of $\mathcal P_{\bm a} \mathcal V$ around $P$
with tuples $\bm a(v_j) \in \mathbb R^{l(P)}$ of real numbers satisfying
\eqref{x-eq4.1}, where $\bm b$ is replaced by $\bm a$.
There exists a bijection
\[
\kappa \colon I \simeq \{1,\ldots,l(P)\}
\]
determined by
$D_i \cap X_P = \{ z_{\kappa(i)} = 0 \}$,
by which we identify $I$ with $\{1,\ldots,l(P)\}$.
Let $\mathcal G_{P,\bm b}$ be the subbundle of
$\mathcal P_{\bm a} \mathcal V|_{D_I \cap X_P}$ generated by
$v_j|_{D_I \cap X_P}$ with $\bm a(v_j)=\bm b$.
Then \eqref{x-eq4.2} follows.
\end{itemize}

For any $\bm c \in (\bm a_I-\bm\delta_I,\bm a_I]$, we define a locally free
$\mathcal O_{D_I}$-module
\[
{}^I\!\Gr^F_{\bm c}
\bigl( \mathcal P_{\bm a} \mathcal V \bigr)
:=
\frac{
{}^I\!F_{\bm c}
\bigl( \mathcal P_{\bm a} \mathcal V|_{D_I} \bigr)
}{
\sum_{\bm b \lneq \bm c}
{}^I\!F_{\bm b}
\bigl( \mathcal P_{\bm a} \mathcal V|_{D_I} \bigr)
},
\]
where $\bm b=(b_i)\lneq \bm c=(c_i)$ means that $b_i\le c_i$ for all $i$
and $\bm b\neq\bm c$.

\section{Plurisubharmonic functions}\label{x-sec5}

For the sake of completeness, we recall the definition of plurisubharmonic functions, which play an important role throughout this paper. 

\begin{defn}[Plurisubharmonic functions]\label{x-def5.1}
Let $\Omega$ be an open subset of $\mathbb C^n$. 
A function $u\colon \Omega \to [-\infty, +\infty)$ is said to be {\em plurisubharmonic} ({\em psh}, for short) if 
\begin{itemize}
\item[(1)] $u$ is upper semicontinuous, and 
\item[(2)] for every complex line $L \subset \mathbb C^n$, the restriction $u|_{\Omega \cap L}$ is subharmonic on $\Omega \cap L$; that is, for all $a \in \Omega$ and $\xi \in \mathbb C^n$ with $|\xi| < d(a, \Omega^c)$, 
\[
u(a) \leq \frac{1}{2\pi} \int_0^{2\pi} u(a+\xi e^{\sqrt{-1}\theta})\, d\theta.
\]
\end{itemize}
\end{defn}

For the basic properties of plurisubharmonic (psh, for short) functions, see, for 
example, \cite[1.B.~Plurisubharmonic Functions]{demailly1} and 
\cite[3.3~Plurisubharmonic Functions]{noguchi-ochiai}. 
In this paper, the notion of the Lelong number plays a crucial role, so we recall it here for the reader's convenience. 
For further details, see, for example, \cite[2.B.~Lelong Numbers]{demailly1}. 

\begin{defn}[Lelong numbers]\label{x-def5.2} 
Let $u$ be a plurisubharmonic (psh) function on an open subset $\Omega \subset \mathbb C^n$. 
Then $\sqrt{-1}\,\partial \overline\partial u$ defines a closed positive $(1,1)$-current on $\Omega$, hence determines a positive Radon measure. 
The {\em Lelong number} $\nu(u, x)$ of $u$ at a point $x \in \Omega$ is defined by
\[
\nu(u, x):= \liminf_{z\to x} \frac{u(z)}{\log |z-x|} \in \mathbb R_{\ge 0}.
\]
It is well known that
\begin{equation}\label{x-eq5.1}
\nu(u, x) = \lim_{r\to +0} \frac{1}{r^{2(n-1)}} 
\int_{B(x, r)} \frac{\sqrt{-1}}{\pi}\, \partial \overline\partial u 
\wedge \left( \frac{\sqrt{-1}}{2\pi} \sum_{i=1}^n dz_i \wedge d\overline z_i \right)^{n-1}
\end{equation} 
holds, 
where $B(x, r) = \{ z \in \mathbb C^n \mid |z-x| < r \}$.  

By Siu's theorem (see, for example, \cite[(13.3) Corollary]{demailly1}), 
for every $c > 0$, the {\em upper level set of the Lelong number}
\[
E_c(u) := \{ z \in \Omega \mid \nu(u, z) \ge c \}
\]
is a closed analytic subset of $\Omega$. 
\end{defn}

In Definition~\ref{x-def5.2}, the right-hand side of the equality~\eqref{x-eq5.1} 
is the original definition of the Lelong number (see, for example, 
\cite[Chapter~III, (5.7)]{demailly}).  
Although the equality in~\eqref{x-eq5.1} is not at all 
obvious, it is a well-known fact 
(see, for example, \cite[Chapter~III, (6.9), Example]{demailly}).  
A relatively accessible proof of Siu's theorem appearing in 
Definition~\ref{x-def5.2} can be found in 
\cite[13.A.~Approximation of Plurisubharmonic Functions 
via Bergman kernels]{demailly1}.  
It is a particularly elegant application of the Ohsawa--Takegoshi 
$L^2$-extension theorem.

\begin{lem}\label{x-lem5.3}
Let $u$ be a plurisubharmonic function on an open subset 
$\Omega \subset \mathbb C^n$, and let $x \in \Omega$ be such that $\overline B(x, R_0)=\{z\in \mathbb C^n\mid 
|z-x|\leq R_0\} \subset \Omega$. 
Then the function
\[
\log r \longmapsto \sup_{|z-x|=r} u(z)
\]
is convex and nondecreasing for $0 \le r \le R_0$. 
\end{lem}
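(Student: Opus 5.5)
We reduce the statement to the one-variable case by restricting $u$ to complex lines through $x$. Set $M(r):=\sup_{|z-x|=r}u(z)$ for $0<r\le R_0$ and $M(0):=u(x)$. The claim concerns $\log r\mapsto M(r)$; at $r=0$ we interpret it as $\log r=-\infty$, so the content there is that $M(r)\to u(x)$ as $r\to+0$ and that $M$ is nondecreasing down to $0$.

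\emph{Monotonicity.} Fix $0<r_1<r_2\le R_0$ and a point $z$ with $|z-x|=r_1$. Write $z=x+r_1\xi$ with $|\xi|=1$, and consider $v(\zeta):=u(x+\zeta\xi)$ on the disk $|\zeta|<R_0+\delta$ for small $\delta>0$. This $v$ is subharmonic by Definition~\ref{x-def5.1}. The maximum principle for subharmonic functions on the closed disk $|\zeta|\le r_2$ gives
\[
v(r_1)\le \max_{|\zeta|=r_2}v(\zeta)\le M(r_2).
\]
Taking the supremum over $z$ yields $M(r_1)\le M(r_2)$. The same argument with the sub-mean value inequality gives $u(x)\le M(r)$. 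By upper semicontinuity, $\limsup_{r\to0}M(r)\le u(x)$, so $M(r)\to u(x)$. Upper semicontinuity also guarantees that the supremum over each compact sphere is attained, so $M(r)<+\infty$.

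\emph{Convexity.} This is the main step. Fix $0<r_1<r<r_2\le R_0$ and write $t=\log r$, $t_i=\log r_i$. Let $\ell(t)$ be the affine function with $\ell(t_i)=M(r_i)$, i.e.\ $\ell(\log|\zeta|)=\alpha+\beta\log|\zeta|$. Note that $\beta\log|\zeta|$ is harmonic on the annulus $A=\{r_1\le|\zeta|\le r_2\}$ in $\mathbb C$. For each unit vector $\xi$, the function $v_\xi(\zeta)-\alpha-\beta\log|\zeta|$ is subharmonic on a neighborhood of $A$. On $\partial A$ it is $\le0$, since $v_\xi\le M(r_i)$ on $|\zeta|=r_i$. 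By the maximum principle on the annulus it is $\le0$ on $A$. Hence $u(x+\zeta\xi)\le\ell(\log|\zeta|)$ for $|\zeta|=r$, and every point of the sphere of radius $r$ has this form. Taking the supremum gives $M(r)\le\ell(\log r)$, which is convexity on $(0,R_0]$.

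If $M(r_1)=-\infty$ (possible when $u\equiv-\infty$ on a line), we replace $M(r_1)$ by a large negative constant $-K$ and let $K\to\infty$. Alternatively, if $u\equiv-\infty$ near $x$, the statement is trivial in the extended sense.

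The only delicate point is justifying the maximum principle for an upper semicontinuous subharmonic function on the annulus, including the case where it takes the value $-\infty$. We would either cite \cite[1.B]{demailly1} or argue directly: the function attains its maximum on the compact annulus by upper semicontinuity. If that maximum occurred at an interior point, the sub-mean value property would force the function to be constant near that point, and a connectedness argument then spreads this to the whole annulus, so the maximum is also attained on the boundary, where it is $\le0$.
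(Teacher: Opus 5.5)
Your proof is correct, but it reaches convexity by a different route from the paper's.

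\textbf{Monotonicity.} This part is the same in substance. The paper just notes that $\sup_{|z-x|=r}u=\max_{\overline B(x,r)}u$ by the maximum principle, which is what your restriction to complex lines through $x$ verifies.

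\textbf{Convexity.} The paper argues in four steps:
\begin{enumerate}
\item compose each line-restriction with the exponential;
\item form the upper envelope $w\mapsto\sup_{|\zeta|=1}u(x+\zeta e^{w})$;
\item invoke the theorem that an upper semicontinuous envelope of subharmonic functions is subharmonic;
\item assert, without proof, that a subharmonic function depending only on $\re w$ is convex in $\re w$.
\end{enumerate}
You instead run Hadamard's three-circles argument directly. You compare each restriction $v_\xi$ with the harmonic function $\alpha+\beta\log|\zeta|$ on the annulus. Since the resulting bound is uniform in $\xi$, you take the supremum over directions only at the end.

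\textbf{What each route buys.} Your route avoids the envelope theorem altogether. It also supplies the step ``subharmonic and depending only on $\re w$ implies convex,'' which is itself proved by exactly your maximum-principle comparison. You also handle two points the paper passes over: the value $-\infty$, and the limit $M(r)\to u(x)$ as $r\to+0$. The paper's version is shorter once the cited facts about envelopes are granted. It packages convexity as a single structural statement, subharmonicity of the envelope in $w$. Yours is more elementary and self-contained.
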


\begin{proof}[Proof of Lemma \ref{x-lem5.3}]
Since 
\[
\sup_{|z-x|=r} u(z) = \max_{z\in \overline B(x, r)} u(z),
\]
it is clear that $\sup_{|z-x|=r} u(z)$ is a nondecreasing function of $r$ 
(see, for example, \cite[(3.3.2) Theorem and (3.3.27) Remark]{noguchi-ochiai}). 

For each $\zeta \in \mathbb C^n$ with $|\zeta| = 1$, the function 
\[
\mathbb C \ni w \longmapsto u(x + \zeta w)
\]
is subharmonic by definition. 
Hence 
\[
\mathbb C \ni w \longmapsto u(x + \zeta e^w)
\]
is also subharmonic (see, for example, \cite[(1.8) Proposition]{demailly1} or 
\cite[(3.3.19) Theorem and (3.3.38) Remark]{noguchi-ochiai}). 
It is easy to check that 
\[
\mathbb C \ni w \longmapsto \sup_{|\zeta|=1} u(x + \zeta e^w)
\]
is upper semicontinuous and locally bounded from above. 
Therefore, by \cite[(3.3.3) Lemma (ii)]{noguchi-ochiai} or 
\cite[Chapter~I, (5.7) Theorem]{demailly}, it is also subharmonic. 
Since this function depends only on $\re w$, it follows that 
$\sup_{|\zeta|=1} u(x + \zeta e^w)$ is convex as a function of $\re w$. 
This proves the lemma. 
\end{proof}

The following lemma is an elementary 
property of convex functions and is included here for completeness.

\begin{lem}\label{x-lem5.4}
Let $f\colon (-\infty, b]\to \mathbb R$ be a convex function such that 
\[
\lim_{x\to -\infty}\frac{f(x)}{x}=\nu \in \mathbb R. 
\] 
Then 
\[
f(x) \le \nu (x-b) + f(b)
\]
for all $x \in (-\infty, b]$. 
\end{lem}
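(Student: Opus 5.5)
The idea is that convexity makes the difference quotient of $f$ monotone, and letting the variable go to $-\infty$ turns the hypothesis into a bound on the slope. The key steps are as follows.

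First, fix $x\in(-\infty,b)$; the case $x=b$ is trivial. For any $t<x$, convexity of $f$ on $(-\infty,b]$ gives the three-slope inequality
\[
\frac{f(x)-f(t)}{x-t}\le \frac{f(b)-f(t)}{b-t}\le\frac{f(b)-f(x)}{b-x}.
\]
This says that the slope of the chord from $t$ is nondecreasing in either endpoint.

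Next, let $t\to-\infty$ in the quotient $\frac{f(b)-f(t)}{b-t}$. Writing it as
\[
\frac{f(t)/t-f(b)/t}{1-b/t},
\]
and using $\lim_{t\to-\infty}f(t)/t=\nu$, it tends to $\nu$.

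Then compare with the chord from $x$. Since the chord slope from a fixed left endpoint to $b$ is nondecreasing in that left endpoint, the second inequality above gives
\[
\frac{f(b)-f(t)}{b-t}\le \frac{f(b)-f(x)}{b-x}
\]
for every $t<x$. Passing to the limit $t\to-\infty$ yields
\[
\nu\le \frac{f(b)-f(x)}{b-x}.
\]

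Finally, multiply by $b-x>0$ to get $f(b)-f(x)\ge \nu(b-x)$. Rearranging gives $f(x)\le \nu(x-b)+f(b)$, as required.

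The only delicate point is getting the direction of the slope monotonicity right. Convexity says the slope of the chord to $b$ increases as the left endpoint moves right, so the limiting slope $\nu$ at $-\infty$ is the infimum of these chord slopes. That infimum is exactly what the inequality requires, and no further obstacle is expected.
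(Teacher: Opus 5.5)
Your proposal is correct and follows the paper's own argument: both use convexity to show that the chord slope $\frac{f(b)-f(x)}{b-x}$ is nondecreasing in $x$, identify its limit as $x\to-\infty$ with $\nu$, and multiply the resulting bound $\nu\le\frac{f(b)-f(x)}{b-x}$ by $b-x>0$. You also spell out why that limit is $\nu$ and treat the trivial case $x=b$ separately, both of which the paper leaves implicit.
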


\begin{proof}[Proof of Lemma \ref{x-lem5.4}]
Since $f$ is convex, we have 
\[
f(\lambda x_1+(1-\lambda)x_2)\le \lambda f(x_1)+(1-\lambda) f(x_2)
\]
for all $x_1, x_2 \in (-\infty, b]$ and $\lambda \in [0,1]$.  
This implies that the difference quotient
\[
\frac{f(b)-f(x)}{b-x}
\]
is nondecreasing in $x$ on $(-\infty, b)$.  
In particular, for any $x_1, x_2 \in (-\infty, b)$ with $x_1 \le x_2$, we have
\[
\frac{f(b)-f(x_1)}{b-x_1} \le \frac{f(b)-f(x_2)}{b-x_2}.
\]

By the assumption 
\[
\lim_{x\to -\infty}\frac{f(x)}{x} = \nu,
\]
we obtain
\[
\lim_{x\to -\infty}\frac{f(b)-f(x)}{b-x} 
= \lim_{x\to -\infty}\frac{f(x)}{x} = \nu.
\]
Hence, for every $x \in (-\infty, b)$,
\[
\frac{f(b)-f(x)}{b-x} \ge \nu.
\]
Multiplying both sides by $b-x > 0$, we get
\[
f(x) \le \nu(x-b) + f(b),
\]
which proves the lemma. 
\end{proof}

By Lemma~\ref{x-lem5.3} and Lemma~\ref{x-lem5.4}, we 
obtain the following corollary.

\begin{cor}\label{x-cor5.5}
Let $u$ be a plurisubharmonic function on an open subset 
$\Omega \subset \mathbb C^n$, and let $x \in \Omega$ satisfy $\overline B(x, R_0) \subset \Omega$. 
Then 
\[
u(z) \le \nu(u, x)\, \log \frac{|z-x|}{R_0} + \max_{w\in \overline B(x, R_0)} u(w)
\]
for all $z \in \overline B(x, R_0)$. 
\end{cor}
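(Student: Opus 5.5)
The plan is to combine Lemma~\ref{x-lem5.3} and Lemma~\ref{x-lem5.4} after the substitution $t=\log r$. Set
\[
f(t):=\sup_{|z-x|=e^{t}} u(z), \qquad t\in(-\infty,\log R_0].
\]
By Lemma~\ref{x-lem5.3}, $f$ is convex and nondecreasing on $(-\infty,\log R_0]$. It takes values in $[-\infty,\infty)$, and it is finite unless $u\equiv-\infty$ near $x$. That degenerate case is trivial, or can be excluded by the usual convention that $u\not\equiv-\infty$ on the relevant component.

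Next I identify the slope of $f$ at $-\infty$ with the Lelong number. Since $f$ is convex, the limit
\[
\lim_{t\to-\infty} \frac{f(t)}{t}
\]
exists in $[0,\infty]$; it is nonnegative because $f$ is nondecreasing and bounded above. The claim is that this limit equals $\nu(u,x)$.

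\begin{itemize}
\item For the inequality $\ge$: for $|z-x|=r<1$ we have $u(z)\le f(\log r)$, and dividing by the negative number $\log r$ gives $u(z)/\log|z-x|\ge f(\log r)/\log r$. Taking the $\liminf$ yields $\nu(u,x)\ge\lim_{t\to-\infty} f(t)/t$.
\item For the inequality $\le$: by upper semicontinuity, the supremum defining $f(\log r)$ is attained at some $z_r$ on the sphere $|z-x|=r$. Then $u(z_r)/\log r=f(\log r)/\log r$, and letting $r\to 0$ gives $\nu(u,x)\le\lim_{t\to-\infty} f(t)/t$.
\end{itemize}

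In particular the limit is finite, since the paper's definition places $\nu(u,x)\in\mathbb R_{\ge0}$. This identification is the only step requiring care, namely the attainment of the supremum and the sign when dividing by $\log r<0$.

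Now apply Lemma~\ref{x-lem5.4} with $b=\log R_0$ and slope $\nu=\nu(u,x)$. For $z\in\overline B(x,R_0)$ with $z\ne x$, put $r=|z-x|$. Then
\[
u(z)\le f(\log r)\le \nu(u,x)\bigl(\log r-\log R_0\bigr)+f(\log R_0).
\]
Moreover,
\[
f(\log R_0)=\max_{w\in\overline B(x,R_0)}u(w)
\]
by the maximum principle, as noted in the proof of Lemma~\ref{x-lem5.3}. This gives the asserted inequality for $z\ne x$.

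At $z=x$ the right-hand side equals $-\infty$ when $\nu(u,x)>0$. In that case $u(x)=-\infty$, because $u(x)\le f(t)\to-\infty$ as $t\to-\infty$ (using submean value and monotonicity). When $\nu(u,x)=0$ the right-hand side is $\nu(u,x)\log 0=0\cdot(-\infty)$, which we interpret as $0$, and the inequality $u(x)\le\max_{\overline B(x,R_0)}u$ holds trivially.
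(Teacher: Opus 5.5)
Your proposal is correct and follows essentially the same route as the paper's proof. You set $f(t)=\sup_{|z-x|=e^{t}}u(z)$, use Lemma~\ref{x-lem5.3} for convexity and monotonicity, identify $\lim_{t\to-\infty}f(t)/t$ with $\nu(u,x)$, and apply Lemma~\ref{x-lem5.4} with $b=\log R_0$. Beyond the paper, you prove the identification of $\nu(u,x)$ with that slope, which the paper only notes, and you treat the point $z=x$ separately; both additions are correct.
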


\begin{proof}[Proof of Corollary \ref{x-cor5.5}]
Note that 
\[
\nu(u, x) = \lim_{r \to +0} \frac{\sup_{|z-x|=r} u(z)}{\log r}.
\]
By Lemma \ref{x-lem5.3}, the function $\sup_{|z-x|=r} u(z)$ is convex and 
nondecreasing in $\log r$. 
The desired inequality follows immediately from this fact by 
Lemma \ref{x-lem5.4}. 
\end{proof}

We will need the following easy lemma in later sections. 

\begin{lem}\label{x-lem5.6} 
On the polydisk $\Delta^n$, consider 
\[
\chi(0, N) = -N\left( \log(-\log |z_1|^2) + \sum_{k=2}^n \log(1 - |z_k|^2) \right),
\]
where $N > 0$. 
Then both $\chi(0, N)$ and $\log |z_1|^2$ are plurisubharmonic functions. 

For any $Q \in \Delta^{n-1}$, let $Q' = (0, Q)$. 
Then 
\[
\nu(\chi(0, N), Q') = 0 \quad \text{and} \quad  
\nu(\log |z_1|^2, Q') = 2.
\]
\end{lem}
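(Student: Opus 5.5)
We split Lemma~\ref{x-lem5.6} into two parts: first plurisubharmonicity, then the two Lelong numbers.

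\emph{Plurisubharmonicity.} Write $\chi(0,N)$ as a sum of functions of a single variable:
\[
\chi(0,N)=-N\log(-\log|z_1|^2)-N\sum_{k=2}^n\log(1-|z_k|^2).
\]
Each summand is smooth on its domain. By Definition~\ref{x-def2.3}, $-\sqrt{-1}\partial\overline\partial\log(-\log|z_1|^2)$ equals the Poincar\'e form $\sqrt{-1}dz_1\wedge d\overline z_1/(|z_1|^2(\log|z_1|^2)^2)\ge 0$. Likewise $-\sqrt{-1}\partial\overline\partial\log(1-|z_k|^2)=\sqrt{-1}dz_k\wedge d\overline z_k/(1-|z_k|^2)^2\ge 0$. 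Hence $\sqrt{-1}\partial\overline\partial\chi(0,N)=N\omega_P\ge 0$ on $\Delta^*\times\Delta^{n-1}$, so $\chi(0,N)$ is psh there.

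The first summand, however, is singular along $\{z_1=0\}$, so we must say how $\chi(0,N)$ is read on all of $\Delta^n$. As $z_1\to0$ we have $-\log|z_1|^2\to+\infty$, so $-N\log(-\log|z_1|^2)\to-\infty$. We therefore set $\chi(0,N)=-\infty$ on $\{z_1=0\}$. With this convention the function $t\mapsto -N\log(-\log t)$ is upper semicontinuous at $t=0$.

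To pass from $\Delta^*\times\Delta^{n-1}$ to $\Delta^n$ we use the standard extension theorem: a psh function on the complement of a proper analytic subset that is locally bounded above extends as a psh function, via its $\limsup$ (see \cite{demailly1}). Here the function is bounded above near every point of $\{z_1=0\}$, since it tends to $-\infty$ there. Its $\limsup$ extension is exactly the $-\infty$ convention above, so $\chi(0,N)$ is psh on $\Delta^n$.

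Alternatively, one can check the sub-mean-value inequality of Definition~\ref{x-def5.1} directly on lines through points of $\{z_1=0\}$: the value there is $-\infty$, so the inequality holds trivially. Finally, $\log|z_1|^2=2\log|z_1|$ is $\log|f|^2$ for the holomorphic function $f=z_1$, hence psh.

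\emph{Lelong numbers.} We use the formula $\nu(u,x)=\liminf_{z\to x}u(z)/\log|z-x|$ from Definition~\ref{x-def5.2}.

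For $u=\log|z_1|^2$ at $Q'=(0,Q)$, note that $|z_1|\le|z-Q'|$. Since $\log|z-Q'|<0$ near $Q'$, dividing gives
\[
\frac{\log|z_1|^2}{\log|z-Q'|}\ge 2 .
\]
Equality holds along the line $z=Q'+(t,0,\ldots,0)$. Hence the $\liminf$ equals $2$.

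For $\chi(0,N)$, take $z$ near $Q'$ with $z_1\neq0$. The terms $-N\log(1-|z_k|^2)$ stay bounded near $Q'$, because $Q\in\Delta^{n-1}$, so they contribute $0$ to the $\liminf$. For the remaining term we use $|z_1|\le|z-Q'|=:r$. This gives $-\log|z_1|^2\ge-\log r^2$, and therefore
\[
0\le -N\log(-\log|z_1|^2)\le -N\log(-\log r^2)
\]
up to sign. Since $\log(-\log r^2)=o(-\log r)$ as $r\to0$, we get $|\chi(0,N)(z)|/|\log r|\to0$ along $z$ with $z_1\neq0$.

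Points with $z_1=0$ give $-\infty$, and the quotient $-\infty/\log r$ is $+\infty$, so they do not affect the $\liminf$. Thus $\nu(\chi(0,N),Q')=0$.

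The only delicate point is this $-\infty$ convention together with the extension across $\{z_1=0\}$. Everything else is a direct computation.
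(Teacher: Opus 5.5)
Your plurisubharmonicity argument is the same as the paper's. Your direct computation of $\nu(\log|z_1|^2,Q')=2$ (via $|z_1|\le|z-Q'|$ and the line $z=Q'+(t,0,\ldots,0)$) is correct, and more elementary than the paper's appeal to $\nu(\log|f|^2,x)=2\ord_x(f)$.

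The gap is in the Lelong number of $\chi(0,N)$. From $|z_1|\le r:=|z-Q'|$ you get $-\log|z_1|^2\ge-\log r^2$, that is, $N\log(-\log|z_1|^2)\ge N\log(-\log r^2)$. This bounds the absolute value $N\log(-\log|z_1|^2)$ of the singular term from \emph{below}, so it cannot give $|\chi(0,N)(z)|/|\log r|\to0$. That uniform claim is in fact false. Write $z=(z_1,z')$ and $\rho:=|z'-Q|$, and let $\rho\to0$ with $|z_1|=e^{-1/\rho}$. Then $\log r=\log\rho+o(1)$ and
\[
\chi(0,N)(z)=-N\log(2/\rho)+O(1)=N\log\rho+O(1),
\]
so $\chi(0,N)(z)/\log r\to N\neq0$. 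With $|z_1|=\exp(-e^{1/\rho})$ the quotient even tends to $+\infty$. The singular term is small relative to $\log r$ only when $z_1$ does not tend to $0$ much faster than $z'\to Q$.

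The conclusion $\nu(\chi(0,N),Q')=0$ is still true. Since $\nu$ is a $\liminf$, you only need two one-sided statements, which is how the paper argues.
\begin{itemize}
\item For $\nu\ge0$: near $Q'$ the term $-N\log(-\log|z_1|^2)$ is $\le0$ and the remaining terms lie in $[0,C]$. Hence $\chi(0,N)(z)/\log r\ge -C/|\log r|\to0$. Alternatively, use that Lelong numbers of psh functions are nonnegative.
\item For $\nu\le0$: restrict to the line $z=Q'+(t,0,\ldots,0)$, where $r=|t|=|z_1|$. There the quotient equals $N\log(-\log|t|^2)/(-\log|t|)+o(1)\to0$. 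This is exactly the paper's bound $\nu(\chi(0,N),Q')\le\liminf_{z_1\to0}\frac{-N\log(-\log|z_1|^2)}{\log|z_1|}=0$.
\end{itemize}

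A minor point: your ``alternative'' sub-mean-value check only handles discs centred on $\{z_1=0\}$. It does not cover discs centred off $\{z_1=0\}$ that meet it, so keep the extension theorem as the actual argument.
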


\begin{proof}[Proof of Lemma \ref{x-lem5.6}]
Note that $\chi(0, N)$ is smooth on $\Delta^* \times \Delta^{n-1}$. 
A direct computation shows that
\[
\sqrt{-1}\,\partial \overline\partial \chi(0, N) = N \omega_P.
\]
Thus $\chi(0, N)$ is plurisubharmonic on $\Delta^* \times \Delta^{n-1}$. 
We define $\chi(0, N) \equiv -\infty$ on $\{0\} \times \Delta^{n-1}$; 
then $\chi(0, N)$ extends to a plurisubharmonic function on $\Delta^n$. 
For details, see \cite[(3.3.41) Theorem]{noguchi-ochiai} and 
\cite[Chapter~I, (5.24) Theorem]{demailly}. 

It is easy to see that
\[
0 \le \nu(\chi(0, N), Q') \le 
\liminf_{z_1 \to 0} 
\frac{-N \log(-\log |z_1|^2)}{\log |z_1|} = 0,
\]
hence $\nu(\chi(0, N), Q') = 0$. 

Since $z_1$ is holomorphic on $\Delta^n$, $\log |z_1|^2 = 2\log |z_1|$ is plurisubharmonic. 
It is well known that 
\[
\nu(\log |z_1|^2, Q') = 2\, \operatorname{ord}_{Q'}(z_1) = 2
\]
(see, for example, \cite[(2.8) Theorem (b)]{demailly1}). 
This completes the proof. 
\end{proof}

To make use of Siu's theorem in Definition~\ref{x-def5.2}, 
we prepare the following elementary lemma. 

\begin{lem}\label{x-lem5.7}
Let $V$ be a connected complex manifold and let $f$ be a real-valued function on $V$. 
Assume that for every $a, b \in \mathbb R$, the sets
\[
V_{\ge a} := \{ x \in V \mid f(x) \ge a \}, 
\qquad
V_{\le b} := \{ x \in V \mid f(x) \le b \}
\]
are closed analytic subsets of $V$. 
Then $f$ is constant on $V$. 
\end{lem}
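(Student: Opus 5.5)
The idea is to show that $f$ can take only one value, using that a proper closed analytic subset of a connected complex manifold is nowhere dense. So the argument reduces to comparing two complementary level sets.

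Fix a point $x_0 \in V$ and set $c := f(x_0)$. The set
\[
V_{\ge c} \cap V_{\le c} = \{ x \in V \mid f(x) = c \}
\]
is a closed analytic subset, being an intersection of two closed analytic subsets. I will show it is all of $V$.

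Suppose some $y \in V$ has $f(y) \neq c$; say $f(y) > c$, the other case being symmetric. For every $a > c$, the set $V_{\ge a}$ is a closed analytic subset not containing $x_0$, so it is a proper analytic subset of $V$. Since $V$ is connected, a proper closed analytic subset has empty interior. Hence
\[
\{ x \in V \mid f(x) > c \} = \bigcup_{n \ge 1} V_{\ge c + 1/n}
\]
is a countable union of nowhere dense closed sets.

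On the other hand, $V_{\le c}$ is a closed analytic subset that does not contain $y$, so it is also proper and nowhere dense. Then
\[
V = V_{\le c} \cup \bigcup_{n \ge 1} V_{\ge c + 1/n}
\]
expresses $V$ as a countable union of nowhere dense closed subsets. This contradicts the Baire category theorem, since a manifold is locally compact Hausdorff. Therefore $f \equiv c$.

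The one nontrivial input is that a closed analytic subset $A \subsetneq V$ of a connected complex manifold has empty interior. This follows from the identity theorem. The interior of $A$ is open. It is also closed: near a boundary point of the interior, $A$ is cut out by holomorphic functions that vanish on a nonempty open set, so they vanish identically on a connected neighborhood. By connectedness the interior is either empty or all of $V$.

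This step is standard and I expect no real obstacle. The only care needed is the Baire step, where the countable union is needed because $f$ is real-valued rather than discretely valued.
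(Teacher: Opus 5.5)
Your argument is correct, but it is organized differently from the paper's. The paper takes $c\in f(V)$ and, for each $\varepsilon>0$, writes $V=V_{\ge c+\varepsilon}\cup V_c^\varepsilon\cup V_{\le c-\varepsilon}$ with $V_c^\varepsilon:=V_{\ge c-\varepsilon}\cap V_{\le c+\varepsilon}$. The two outer sets are proper because they miss a point where $f=c$. Using tacitly that a connected complex manifold is not a finite union of proper closed analytic subsets, it concludes $V_c^\varepsilon=V$, and intersecting over $\varepsilon$ gives $f\equiv c$. You instead cover $V$ by countably many proper analytic sets and invoke Baire's theorem. Both routes rest on the same fact, namely that a proper closed analytic subset of a connected manifold has empty interior. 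You prove this explicitly via the identity theorem, whereas the paper leaves it implicit; that is a real advantage of your write-up. Your closing remark that the countable union is needed because $f$ is real-valued is not accurate, however. If $f(x_0)=c<d=f(y)$, pick any $a\in(c,d)$. Then $V=V_{\ge a}\cup V_{\le a}$, where $V_{\ge a}$ misses $x_0$ and $V_{\le a}$ misses $y$. Since a finite union of closed nowhere dense sets is nowhere dense, this is already a contradiction. The countable union arose only because you used $c=f(x_0)$ itself as the threshold. With an intermediate threshold, or with the paper's $\varepsilon$-windows, only finite unions occur and Baire's theorem is unnecessary.
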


\begin{proof}[Proof of Lemma \ref{x-lem5.7}]
Take $c \in f(V) \subset \mathbb R$. 
For every $\varepsilon > 0$, set
\[
V_c^\varepsilon := V_{\ge c - \varepsilon} \cap V_{\le c + \varepsilon}.
\]
Then 
\[
V = V_{\ge c + \varepsilon} \cup V_c^\varepsilon \cup V_{\le c - \varepsilon}.
\]
Since $c \in f(V)$, both $V_{\ge c + \varepsilon}$ 
and $V_{\le c - \varepsilon}$ are closed analytic subsets 
of $V$ with $V_{\ge c + \varepsilon}\subsetneq V$ 
and $V_{\le c - \varepsilon}\subsetneq V$, hence $V_c^\varepsilon = V$. 
Thus 
\[
V = \bigcap_{\varepsilon > 0} V_c^\varepsilon = \{ x \in V \mid f(x) = c \},
\]
which means that $f$ is constant on $V$. 
\end{proof}

\section{Basic properties of acceptable bundles on 
$(\Delta^*)^l \times \Delta^{n-l}$}\label{x-sec6}

In this section, we discuss basic properties of acceptable bundles on a
polydisk punctured in the first $l$ coordinates. 
A detailed description of acceptable bundles on a partially punctured polydisk is indispensable for the study of acceptable bundles on complex manifolds (see Definition \ref{x-def2.4}). 
We employ the following definition of acceptable vector bundles on a
partially punctured polydisk throughout the present paper.

\begin{defn}[Acceptable bundles on a partially punctured polydisk]
\label{x-def6.1}
Let $E$ be a holomorphic vector bundle on
$(\Delta^*)^l \times \Delta^{n-l}$, equipped with a smooth Hermitian
metric $h$.
We say that $(E,h)$ is \emph{acceptable} if its curvature
$\sqrt{-1}\Theta_h(E)$, viewed as a smooth
$\Hom(E,E)$-valued $(1,1)$-form on
$(\Delta^*)^l \times \Delta^{n-l}$, is bounded with respect to the
Hermitian metric $(\cdot,\cdot)_{h,\omega_P}$, 
which is the natural Hermitian metric on
$\Hom(E,E)\otimes \Omega^{1,1}$ induced by the metric $h$ on $E$
and the Poincar\'e metric $\omega_P$.
In other words, there exists a constant $C>0$ such that
\[
|\sqrt{-1}\Theta_h(E)|_{h,\omega_P} \leq C
\qquad
\text{on } (\Delta^*)^l \times \Delta^{n-l}.
\]
\end{defn}

The following lemma is immediate.

\begin{lem}\label{x-lem6.2}
Let $(E,h)$ be an acceptable vector bundle on
$(\Delta^*)^l \times \Delta^{n-l}$.
Then the dual bundle $(E^\vee,h^\vee)$ and the determinant line bundle
$(\det E, \det h)$ are also acceptable.

Let $(E_1,h_1)$ and $(E_2,h_2)$ be acceptable vector bundles on
$(\Delta^*)^l \times \Delta^{\,n-l}$.
Then the tensor product $(E_1 \otimes E_2, h_1 \otimes h_2)$ and the Hom bundle
$(\Hom(E_1,E_2), h_1^\vee \otimes h_2)$ are acceptable.
\end{lem}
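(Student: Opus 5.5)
The plan is to express each curvature in terms of the curvatures of the factors by standard Chern connection formulas. The boundedness with respect to $(\cdot,\cdot)_{h,\omega_P}$ then follows from pointwise norm inequalities. Every curvature form in question is built from $\sqrt{-1}\Theta_{h}(E)$, $\sqrt{-1}\Theta_{h_1}(E_1)$, $\sqrt{-1}\Theta_{h_2}(E_2)$ by linear-algebraic operations on the endomorphism part. These operations are transpose, trace, and tensoring with the identity. They do not touch the form part, so their effect on the norm is controlled by constants depending only on the ranks.

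For the dual, the Chern connection of $(E^\vee,h^\vee)$ is the dual connection. Hence $\Theta_{h^\vee}(E^\vee)=-{}^t\Theta_h(E)$, where the transpose acts on the $\End$-factor. Taking the transpose with respect to dual bases, orthonormal for $h$ and $h^\vee$, is an isometry. Therefore $|\sqrt{-1}\Theta_{h^\vee}(E^\vee)|_{h^\vee,\omega_P}=|\sqrt{-1}\Theta_h(E)|_{h,\omega_P}\le C$.

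For the determinant, we have $\Theta_{\det h}(\det E)=\operatorname{tr}\Theta_h(E)$. At each point, I choose an $h$-orthonormal frame and an $\omega_P$-orthonormal coframe. Writing $\Theta_h(E)=\sum_{\alpha,\beta}\Theta_{\alpha\beta}\,dz^\alpha\wedge d\bar z^\beta$ in these frames, the Cauchy--Schwarz inequality gives
\[
|\operatorname{tr}\Theta|_{\omega_P}\le \sqrt{r}\,|\Theta|_{h,\omega_P},
\]
with $r=\rank E$.

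For the tensor product, the Chern connection of $h_1\otimes h_2$ is $D_{h_1}\otimes 1+1\otimes D_{h_2}$. Hence
\[
\Theta_{h_1\otimes h_2}(E_1\otimes E_2)=\Theta_{h_1}(E_1)\otimes \Id_{E_2}+\Id_{E_1}\otimes\Theta_{h_2}(E_2).
\]
For an endomorphism $A$, the Hilbert--Schmidt norm satisfies $|A\otimes \Id_{E_2}|=\sqrt{\rank E_2}\,|A|$, and the analogous identity holds for $\Id_{E_1}\otimes B$. Applying this coefficientwise in the form part bounds the curvature by $\sqrt{r_2}\,C_1+\sqrt{r_1}\,C_2$.

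For the Hom bundle, I use $\Hom(E_1,E_2)\cong E_1^\vee\otimes E_2$ isometrically, with metric $h_1^\vee\otimes h_2$. This reduces the Hom case to the dual and tensor product cases already treated.

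No step is a serious obstacle. The only point needing care is the norm comparison, namely that the $\End$-linear operations commute with the form-part norm. This is handled by working pointwise in simultaneous orthonormal frames for $h$ and $\omega_P$, as above.
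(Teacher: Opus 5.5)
Your proposal is correct and follows essentially the same route as the paper. The paper only says that the curve-case argument (Lemma~2.2 of the previous paper in this series) applies verbatim, and that argument is the same pointwise computation you give: curvature $-{}^t\Theta$ for the dual, $\operatorname{tr}\Theta$ for the determinant, $\Theta_1\otimes\Id+\Id\otimes\Theta_2$ for the tensor product, and $\Hom(E_1,E_2)\cong E_1^\vee\otimes E_2$, estimated in frames orthonormal for $h$ and $\omega_P$; this does not depend on the dimension of the base.
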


\begin{proof}[Proof of Lemma \ref{x-lem6.2}]
The same argument as in \cite[Lemma~2.2]{fujino-fujisawa-ono} applies
verbatim in our setting.
\end{proof}

We now recall various notions of positivity for vector bundles on a complex
manifold.
For details, see for example \cite[Chapter~10]{demailly1} and 
\cite[Chapter~VII, \S6]{demailly}.

\begin{defn}\label{x-def6.3}
Let $E$ be a holomorphic vector bundle on a complex manifold $X$, equipped with
a smooth Hermitian metric $h$.
Let $D_h$ be the Chern connection of $(E,h)$, and denote the curvature by
$\Theta_h(E)=D^2_h$.

Fix $x\in X$, and choose a frame $e_1,\dots,e_r$ of $E$ at $x$ with dual frame
$e^1,\dots,e^r$.
Let $(z_1,\dots,z_n)$ be local holomorphic coordinates centered at $x$.
Then we may write
\[
\sqrt{-1}\Theta_h(E)
 = \sum_{1 \le j,k \le n}\sum_{1 \le \alpha,\beta \le r}
   R^\beta_{j\overline{k}\alpha}\,
   dz_j \wedge d\overline{z}_k \otimes e^\alpha \otimes e_\beta.
\]
We put
\[
R_{j\overline{k}\alpha\overline{\beta}}
 := h_{\gamma\overline{\beta}}\, R^\gamma_{j\overline{k}\alpha},
 \qquad h_{\gamma\overline{\beta}} := h(e_\gamma,e_\beta).
\]

We say that $(E,h)$ is \emph{Nakano positive} (resp.\ \emph{Nakano semipositive})
at $x$ if
\[
\sum_{j,k,\alpha,\beta}
  R_{j\overline{k}\alpha\overline{\beta}}\, 
  u^{j\alpha}\,\overline{u^{k\beta}}
>0 \quad (\text{resp.\ $\ge 0$})
\]
for any nonzero vector
\[
u = \sum_{j,\alpha} u^{j\alpha}\,
     \left(\frac{\partial}{\partial z_j}\right)\otimes e_\alpha
   \in (T^{1,0}_X \otimes E)_x.
\]

We say that $(E,h)$ is \emph{Griffiths positive} (resp.\ \emph{Griffiths
semipositive}) at $x$ if
\[
\sum_{j,k,\alpha,\beta}
  R_{j\overline{k}\alpha\overline{\beta}}\,
  \xi^j\zeta^\alpha\,\overline{\xi^k}\overline{\zeta^\beta}
>0 \quad (\text{resp.\ $\ge 0$})
\]
for all nonzero
\[
\xi=\sum_j \xi^j \left(\frac{\partial}{\partial z_j}\right)
   \in T^{1,0}_{X,x},
\qquad
\zeta=\sum_\alpha \zeta^\alpha e_\alpha \in E_x.
\]

If $(E, h)$ is Nakano positive (resp.\ Nakano semipositive,
Griffiths positive, or Griffiths semipositive) at every point $x \in X$,
then we simply say that $(E, h)$ is Nakano positive (resp.\ Nakano semipositive,
Griffiths positive, or Griffiths semipositive). 

The notions of Nakano (semi)negativity and Griffiths (semi)negativity are
defined similarly by reversing the inequalities.
\end{defn}

\begin{rem}\label{x-rem6.4}
By definition, Nakano (semi)positivity (resp.\ Nakano (semi)negativity)
implies Griffiths (semi)positivity (resp.\ Griffiths (semi)negativity).
The converse holds when $\dim X=1$ or $\rank E=1$.
\end{rem}

Lemma \ref{x-lem6.5} is a key estimate.

\begin{lem}\label{x-lem6.5}
Let $(E,h)$ be a holomorphic vector bundle on
$X^*=(\Delta^*)^l\times \Delta^{n-l}$.
If
\[
|\sqrt{-1}\Theta_h(E)|_{h,\omega_P} \le C,
\]
that is, if $(E,h)$ is acceptable on $(\Delta^*)^l\times\Delta^{n-l}$,
then
\[
- C\,\omega_P \otimes \Id_E
   \ \le_{\Nak}\ 
\sqrt{-1}\Theta_h(E)
   \ \le_{\Nak}\ 
C\,\omega_P \otimes \Id_E
\]
on $(\Delta^*)^l\times\Delta^{n-l}$.
Here $A \le_{\Nak} B$ means that $B-A$ defines a Nakano semipositive Hermitian 
form on $T^{1,0}_{X^*}\otimes E$ with respect to $h$.
\end{lem}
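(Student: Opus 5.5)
The lemma is a pointwise statement, so I plan to prove it by linear algebra at a single point $x\in X^*$. Nothing from the earlier sections is needed beyond the definitions of $|\cdot|_{h,\omega_P}$ (Definition~\ref{x-def6.1}) and of Nakano semipositivity (Definition~\ref{x-def6.3}).

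\textbf{Step 1: adapted frames.} Since $\omega_P$ is diagonal in the coordinates $z_1,\dots,z_n$, I replace $\partial/\partial z_j$ at $x$ by rescaled vectors $\xi_j:=\lambda_j\,\partial/\partial z_j$ with $\lambda_j>0$ chosen so that the $\xi_j$ are orthonormal for $\omega_P$ at $x$. I also choose an $h$-orthonormal basis $e_1,\dots,e_r$ of $E_x$. Write
\[
R_{j\overline k\alpha\overline\beta}:=\sqrt{-1}\Theta_h(E)(\xi_j,\overline\xi_k)\ \text{evaluated on } (e_\alpha,e_\beta)
\]
for the curvature coefficients in these frames. Then the induced metric on $\Hom(E,E)\otimes\Omega^{1,1}$ gives
\[
|\sqrt{-1}\Theta_h(E)|_{h,\omega_P}^2=\sum_{j,k,\alpha,\beta}|R_{j\overline k\alpha\overline\beta}|^2 .
\]
This uses the normalization in which $\{dz_j\wedge d\overline z_k\}$ built from an orthonormal coframe is orthonormal. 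With a different normalization convention only a universal constant changes, and that constant can be absorbed into $C$.

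\textbf{Step 2: the Nakano form as a Hermitian form.} For $u=\sum u^{j\alpha}\xi_j\otimes e_\alpha\in (T^{1,0}_{X^*}\otimes E)_x$, set
\[
\theta(u):=\sum_{j,k,\alpha,\beta} R_{j\overline k\alpha\overline\beta}\,u^{j\alpha}\overline{u^{k\beta}} .
\]
Because $\Theta_h(E)$ is the curvature of the Chern connection, $\sqrt{-1}\Theta_h(E)$ is a real $(1,1)$-form with values in $h$-Hermitian endomorphisms. Concretely, $\overline{R_{j\overline k\alpha\overline\beta}}=R_{k\overline j\beta\overline\alpha}$. Hence the $nr\times nr$ matrix $M$ with entries $M_{(j\alpha),(k\beta)}:=R_{j\overline k\alpha\overline\beta}$ is Hermitian, and $\theta(u)=u^{*}Mu$ up to transposition conventions.

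\textbf{Step 3: operator norm versus Hilbert--Schmidt norm.} Let $\mu$ be any eigenvalue of the Hermitian matrix $M$. Then
\[
|\mu|\le\|M\|_{\mathrm{op}}\le\|M\|_{HS}=\Bigl(\sum|R_{j\overline k\alpha\overline\beta}|^2\Bigr)^{1/2}=|\sqrt{-1}\Theta_h(E)|_{h,\omega_P}\le C .
\]
It follows that $-C|u|^2\le\theta(u)\le C|u|^2$ with $|u|^2=\sum|u^{j\alpha}|^2$.

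\textbf{Step 4: conclusion.} In the adapted frames, the Hermitian form on $T^{1,0}_{X^*}\otimes E$ associated with $C\,\omega_P\otimes\Id_E$ is exactly $u\mapsto C\sum|u^{j\alpha}|^2=C|u|^2$. Step 3 therefore says precisely that $C\,\omega_P\otimes\Id_E-\sqrt{-1}\Theta_h(E)$ and $\sqrt{-1}\Theta_h(E)+C\,\omega_P\otimes\Id_E$ are Nakano semipositive at $x$. Since $x$ was arbitrary, the lemma follows.

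The only delicate points are bookkeeping: the Hermitian symmetry of $M$ and the normalization constant relating the $(1,1)$-form norm to the Hilbert--Schmidt norm. Neither is a genuine obstacle.
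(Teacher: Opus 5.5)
Your proposal is correct and is essentially the paper's argument: fix $x$, use $\omega_P$-orthonormal vectors and an $h$-orthonormal basis of $E_x$ so that $|\sqrt{-1}\Theta_h(E)|_{h,\omega_P}^2=\sum|R_{j\overline k\alpha\overline\beta}|^2$, and bound the Nakano form by this Hilbert--Schmidt norm. The only difference is cosmetic. The paper gets $|\theta(u)|\le C|u|^2$ by applying Cauchy--Schwarz twice, which is exactly $|u^*Mu|\le\|M\|_{HS}|u|^2$ and needs no spectral theorem. You instead go through the eigenvalues of the Hermitian matrix $M$, which makes explicit why $\theta(u)$ is real. Your fallback of absorbing a normalization constant into $C$ is unnecessary, and it would change the constant in the statement; with the normalization you adopt, as in the paper, the identity is exact.
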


\begin{proof}[Proof of Lemma \ref{x-lem6.5}]
Fix $x \in (\Delta^*)^l\times\Delta^{n-l}$.
Choose local coordinates $(w_1,\dots,w_n)$ centered at $x$ such that
\[
\omega_P = \sqrt{-1}\sum_{i=1}^n dw_i \wedge d\overline{w}_i.
\]
Choose a holomorphic frame $e_1,\dots,e_r$ of $E$ which is orthonormal at $x$.
Then
\[
\sqrt{-1}\Theta_h(E)
 = \sum_{j,k,\alpha,\beta}
   R^\beta_{j\overline{k}\alpha}\,
   dw_j \wedge d\overline{w}_k \otimes e^\alpha \otimes e_\beta,
\]
and thus $R_{j\overline{k}\alpha\overline{\beta}}(x)
 = R^\beta_{j\overline{k}\alpha}(x)$.

Therefore,
\[
\sum_{j,k,\alpha,\beta}
 |R_{j\overline{k}\alpha\overline{\beta}}(x)|^2
 = |\sqrt{-1}\Theta_h(E)(x)|_{h,g_{\mathbf{P}}}^2
 \le C^2.
\]

For any
\[
u = \sum_{j,\alpha} u^{j\alpha}
     \left(\frac{\partial}{\partial w_j}\right)\otimes e_\alpha
   \in (T^{1,0}_{X^*}\otimes E)_x,
\]
we have 
\[
\begin{split}
\left| \sum _{j, k, \alpha, \beta} R_{j\overline k \alpha \overline \beta} (x) u^{j\alpha} 
\overline {u^{k\beta}}\right|^2
&\leq 
\left(\sum _{k, \beta} \left| \sum _{j, \alpha}R_{j\overline k\alpha\overline \beta} (x) 
u^{j\alpha} \right|^2\right)
\left(\sum _{k, \beta} |\overline {u^{k\beta}}|^2\right)\\ 
&\leq 
\left(\sum _{k, \beta} \left( \sum _{j, \alpha} \left|
R_{j\overline k\alpha\overline \beta} (x)\right|^2\right)
\left(\sum _{j, \alpha} |u^{j\alpha}|^2\right) \right)
\left(\sum _{k, \beta} |\overline {u^{k\beta}}|^2\right)\\
&= |u|^4_{h,\omega_P} \cdot \sum _{j, k, \alpha, \beta} \left|R_{j \overline k 
\alpha\overline \beta}(x)\right|^2
\\ 
&\leq |u|^4_{h, \omega_P} \cdot C^2  
\end{split} 
\] 
by using the Cauchy--Schwarz inequality twice. 
This gives the desired inequality 
\[
-C |u|^2_{h, \omega_P} \leq \sum _{j, k, \alpha, \beta} R_{j\overline k\alpha 
\overline \beta}(x) u^{j\alpha} \overline{u^{k\beta}}\leq C|u|^2_{h, \omega_P}. 
\]
and completes the proof.
\end{proof}

\begin{defn}[Twisted metric]\label{x-def6.6}
Let $E$ be a holomorphic vector bundle on
$(\Delta^*)^l\times \Delta^{n-l}$ with Hermitian metric $h$. 
For $\bm a=(a_1,\dots,a_l)\in\mathbb{R}^l$ and $N\in\mathbb{R}$, set 
\[
\chi(\bm a,N)
 := -\sum_{j=1}^l a_j \log|z_j|^2
    - N\!\left(
         \sum_{j=1}^l \log(-\log|z_j|^2)
       + \sum_{k=l+1}^n \log(1-|z_k|^2)
       \right).
\]

Define the twisted metric
\[
h(\bm a,N)
 := h\, e^{-\chi(\bm a,N)}
 = h \cdot 
   \prod_{j=1}^l |z_j|^{2a_j}(-\log|z_j|^2)^N
   \prod_{k=l+1}^n (1-|z_k|^2)^N.
\]

Then
\[
\sqrt{-1}\Theta_{h(\bm a,N)}(E)
 = \sqrt{-1}\Theta_h(E)
   + N\,\omega_P \otimes \Id_E.
\]
\end{defn}

By Lemma \ref{x-lem6.5} and Definition \ref{x-def6.6}, we have: 

\begin{cor}\label{x-cor6.7}
Let $(E,h)$ be acceptable on $X^*=(\Delta^*)^l\times\Delta^{n-l}$.
Then there exists $N_0$ such that for all $\bm a\in\mathbb{R}^l$ and all
$N\ge N_0$,
\[
(E,h(\bm a,N)) \ \text{is Nakano semipositive},\qquad
(E,h(\bm a,-N)) \ \text{is Nakano seminegative}.
\]
In particular, $h(\bm a,-N)$ is Griffiths seminegative for $N\ge N_0$.
If $N>N_0$ then 
\[
(E,h(\bm a,N)) \ \text{is Nakano positive},\qquad
(E,h(\bm a,-N)) \ \text{is Nakano negative}.
\]
\end{cor}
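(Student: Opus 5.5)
The plan is to combine Lemma~\ref{x-lem6.5} with the curvature formula of Definition~\ref{x-def6.6}. Since adding a multiple of $\omega_P\otimes\Id_E$ changes the Nakano form by exactly that multiple of $|u|^2_{h,\omega_P}$, the whole statement reduces to pointwise linear algebra at each point of $X^*=(\Delta^*)^l\times\Delta^{n-l}$. I take $N_0:=C$, where $C$ is the acceptability constant of Definition~\ref{x-def6.1}.

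\textbf{Step 1 (the twisted curvature).} Fix $\bm a\in\mathbb R^l$ and $N\in\mathbb R$. Note that $h(\bm a,N)=h\,e^{-\chi(\bm a,N)}$ is a conformal rescaling of $h$ by a positive smooth function. Two consequences:
\begin{itemize}
\item The curvature becomes $\sqrt{-1}\Theta_{h(\bm a,\pm N)}(E)=\sqrt{-1}\Theta_h(E)\pm N\,\omega_P\otimes\Id_E$, as recorded in Definition~\ref{x-def6.6}.
\item The terms $-a_j\log|z_j|^2$ are pluriharmonic on $X^*$, so they contribute nothing. This is why $N_0$ can be chosen independently of $\bm a$.
\end{itemize}
One should also check that Nakano (semi)positivity with respect to $h(\bm a,N)$ is the same as with respect to $h$. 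The Hermitian form is $R_{j\overline k\alpha\overline\beta}=h_{\gamma\overline\beta}R^\gamma_{j\overline k\alpha}$, and rescaling $h$ by $e^{-\chi}>0$ multiplies this form by a positive scalar. Hence the sign conditions are unaffected, and we may compute with $h$ throughout.

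\textbf{Step 2 (positive twist).} Take $u\in(T^{1,0}_{X^*}\otimes E)_x$ and write $\Theta(u)$ for the Nakano form of $\sqrt{-1}\Theta_h(E)$ evaluated on $u$, with respect to $h$. Lemma~\ref{x-lem6.5} gives $\Theta(u)\ge -C|u|^2_{h,\omega_P}$. The form of $N\omega_P\otimes\Id_E$ evaluated on $u$ is exactly $N|u|^2_{h,\omega_P}$. Therefore the form of $\sqrt{-1}\Theta_{h(\bm a,N)}(E)$ is
\[
\Theta(u)+N|u|^2_{h,\omega_P}\ \ge\ (N-C)\,|u|^2_{h,\omega_P}.
\]
This is $\ge0$ when $N\ge N_0$, and $>0$ for $u\ne0$ when $N>N_0$.

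\textbf{Step 3 (negative twist).} Symmetrically, the upper bound $\Theta(u)\le C|u|^2_{h,\omega_P}$ from Lemma~\ref{x-lem6.5} gives, for $h(\bm a,-N)$,
\[
\Theta(u)-N|u|^2_{h,\omega_P}\ \le\ -(N-C)\,|u|^2_{h,\omega_P}.
\]
This yields Nakano seminegativity for $N\ge N_0$ and Nakano negativity for $N>N_0$.

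\textbf{Step 4 (Griffiths).} Griffiths seminegativity of $h(\bm a,-N)$ follows from Remark~\ref{x-rem6.4}: restrict the Nakano inequality to decomposable tensors $u=\xi\otimes\zeta$.

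The only point needing care is Step~2's identification of the form of $\omega_P\otimes\Id_E$ with $|u|^2_{h,\omega_P}$. It follows from the orthonormal coordinates at $x$ used in the proof of Lemma~\ref{x-lem6.5}, so I expect no real obstacle.
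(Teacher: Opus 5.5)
Your proposal is correct and follows the paper's argument: combine the twist identity $\sqrt{-1}\Theta_{h(\bm a,\pm N)}(E)=\sqrt{-1}\Theta_h(E)\pm N\,\omega_P\otimes\Id_E$ with the two-sided Nakano bound of Lemma~\ref{x-lem6.5}, taking $N_0=C$. Your additional checks, namely the pluriharmonicity of the $a_j$-terms and the invariance of the sign conditions under conformal rescaling of $h$, are details the paper leaves implicit.
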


\begin{proof}[Proof of Corollary \ref{x-cor6.7}]
Since $h(\bm a,N)=h\,e^{-\chi(\bm a,N)}$, we have
\[
\sqrt{-1}\Theta_{h(\bm a,N)}(E)
 = \sqrt{-1}\Theta_h(E) + N\,\omega_P\otimes\Id_E.
\]
The claim follows immediately from Lemma~\ref{x-lem6.5}.
\end{proof}

\begin{lem}\label{x-lem6.8}
Let $(E,h)$ be acceptable on $(\Delta^*)^l\times\Delta^{n-l}$.
For any $\bm b\in\mathbb{R}^l$, set
\[
(E^\dagger,h^\dagger) := (E, h(\bm b,0)).
\]
Then
\[
\sqrt{-1}\Theta_h(E)=\sqrt{-1}\Theta_{h^\dagger}(E^\dagger),
\qquad
{}_{\bm a}E = {}_{\bm a-\bm b}E^\dagger.
\]
\end{lem}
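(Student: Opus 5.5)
The proof is a direct computation with the two definitions involved: the twisted metric $h(\bm b,0)$ of Definition~\ref{x-def6.6} and the prolongation ${}_{\bm a}E$ from Section~\ref{x-sec1}.

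\emph{Curvature.} Specializing Definition~\ref{x-def6.6} to $N=0$ gives
\[
h^\dagger = h\,e^{-\chi(\bm b,0)} = h\cdot \prod_{j=1}^l |z_j|^{2b_j}.
\]
Hence $h^\dagger$ differs from $h$ by the positive smooth function $\prod_j |z_j|^{2b_j}$. Its logarithm, $\sum_j b_j\log|z_j|^2$, is pluriharmonic on $(\Delta^*)^l\times\Delta^{n-l}$, because locally each $\log|z_j|^2$ is the real part of a holomorphic branch of $2\log z_j$. For a rescaled metric $h e^{-\varphi}$ the curvature is $\sqrt{-1}\Theta_h(E)+\sqrt{-1}\partial\overline\partial\varphi\otimes\Id_E$. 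Here $\varphi=\chi(\bm b,0)$ and $\partial\overline\partial\chi(\bm b,0)=0$, so the two curvatures coincide. This is also the $N=0$ case of the curvature formula already recorded in Definition~\ref{x-def6.6}. In particular $(E^\dagger,h^\dagger)$ is again acceptable.

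\emph{Prolongations.} For a section $s$ of $E$ over $U\cap\bigl((\Delta^*)^l\times\Delta^{n-l}\bigr)$ we have
\[
|s|_{h^\dagger} = |s|_h\cdot\prod_{j=1}^l |z_j|^{b_j}.
\]
Therefore $|s|_h = O\bigl(\prod_j |z_j|^{-(a_j+\varepsilon)}\bigr)$ holds if and only if $|s|_{h^\dagger} = O\bigl(\prod_j |z_j|^{-(a_j-b_j+\varepsilon)}\bigr)$, with the same constant and for each fixed $\varepsilon>0$. Requiring this for every $\varepsilon>0$ gives $\Gamma(U,{}_{\bm a}E)=\Gamma(U,{}_{\bm a-\bm b}E^\dagger)$ for every open $U\subset\Delta^n$. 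Both sheaves are subsheaves of the same pushforward of $E$, so they are equal as $\mathcal O_{\Delta^n}$-modules.

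Neither step is a real obstacle. The only point needing care is the pluriharmonicity of $\log|z_j|^2$ on the punctured factors, and the fact that the curvature of the twisted metric is computed only away from the divisor, where everything is smooth.
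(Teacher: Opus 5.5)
Your proposal is correct and follows essentially the same route as the paper. Both arguments use $\partial\overline\partial\chi(\bm b,0)=0$ on $(\Delta^*)^l\times\Delta^{n-l}$ to get equality of the curvatures. For the prolongations, you simply spell out the step the paper calls ``immediate from the definition,'' namely the identity $|s|_{h^\dagger}=|s|_h\prod_{j=1}^l|z_j|^{b_j}$.
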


\begin{proof}[Proof of Lemma \ref{x-lem6.8}]
Since $\partial\overline{\partial}\chi(\bm b,0)=0$ on
$(\Delta^*)^l\times\Delta^{n-l}$, 
we have 
\[
\begin{split}
\sqrt{-1} \Theta_{h^\dag} (E^\dag) &=\sqrt{-1} \Theta_h (E)+
\sqrt{-1} \partial 
\overline \partial \chi (\bm b, 0)\otimes \Id_E \\ 
&=\sqrt{-1} \Theta_h(E)
\end{split}
\] 
The identity of the prolongations is immediate from the definition.
\end{proof}

The following lemma is very well known and it plays a crucial role 
in the theory of acceptable bundles through Corollary \ref{x-cor6.7}. 

\begin{lem}\label{x-lem6.9}
Let $(E,h)$ be a vector bundle on a complex manifold $X$ with
$\sqrt{-1}\Theta_h(E)$ Griffiths seminegative.
Then for every holomorphic section $s$ of $E$, the function $\log|s|^2_h$ is
plurisubharmonic on $X$.
\end{lem}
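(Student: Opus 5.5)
The plan is to first show that $\log|s|^2_h$ is plurisubharmonic wherever $s$ does not vanish, by a direct curvature computation, and then to handle the zero set of $s$ separately. Upper semicontinuity is clear everywhere, since $|s|_h^2$ is continuous. The convention $\log 0=-\infty$ then makes $\log|s|^2_h$ an upper semicontinuous function $X\to[-\infty,\infty)$.

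\textbf{Computation off the zero set.} On the open set $X_0=\{s\neq 0\}$ the function is smooth. Since $\overline\partial s=0$ and $D_h$ is the Chern connection, we have $\partial|s|^2_h = h(D'_h s, s)$. Applying $\overline\partial$ and using $\overline\partial D'_h s = \Theta_h(E)s$ (up to the sign conventions fixed by $\Theta_h(E)=D_h^2$) gives
\[
\sqrt{-1}\,\partial\overline\partial |s|^2_h
 = \sqrt{-1}\,h(D'_h s, D'_h s) - h(\sqrt{-1}\Theta_h(E)s, s),
\]
where $\sqrt{-1}\,h(D'_hs,D'_hs)$ denotes the semipositive $(1,1)$-form $\xi\mapsto|D'_hs(\xi)|^2_h$. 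Dividing by $|s|^2_h$ and subtracting $\sqrt{-1}\,\partial|s|^2_h\wedge\overline\partial|s|^2_h/|s|_h^4$ yields
\[
\sqrt{-1}\,\partial\overline\partial\log|s|^2_h
 = \frac{\sqrt{-1}\bigl(|s|^2_h\, h(D'_hs,D'_hs) - h(D'_hs,s)\wedge\overline{h(D'_hs,s)}\bigr)}{|s|_h^4}
 - \frac{h(\sqrt{-1}\Theta_h(E)s,s)}{|s|^2_h}.
\]
The first term is semipositive: evaluating it on a tangent vector $\xi$ gives $|s|^2|D'_hs(\xi)|^2-|h(D'_hs(\xi),s)|^2\ge0$ by Cauchy--Schwarz. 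The second term is semipositive because Griffiths seminegativity says exactly that $h(\sqrt{-1}\Theta_h(E)(\xi,\overline\xi)s,s)\le0$ for all $\xi$. Hence $\log|s|^2_h$ is plurisubharmonic on $X_0$.

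\textbf{Extension across the zero set.} We may assume $s\not\equiv0$ on each connected component, since otherwise the function is $\equiv-\infty$ there, which is allowed. Then $Z=\{s=0\}$ is a proper analytic subset. Near any point of $Z$, the function $\log|s|^2_h$ is bounded above, because $|s|_h$ is locally bounded. By the Riemann extension theorem for plurisubharmonic functions (\cite[(3.3.41) Theorem]{noguchi-ochiai}, \cite[Chapter~I, (5.24) Theorem]{demailly}, already used in Lemma~\ref{x-lem5.6}), the psh function on $X\setminus Z$ extends uniquely to a psh function on $X$ via the upper-semicontinuous regularization. This extension coincides with $\log|s|^2_h=-\infty$ on $Z$, since $\log|s|^2_h\to-\infty$ on approaching $Z$.

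An alternative that avoids the extension theorem is to consider $\log(|s|^2_h+\varepsilon)$. The same computation shows it is psh everywhere for each $\varepsilon>0$, and it decreases to $\log|s|^2_h$ as $\varepsilon\downarrow0$; a decreasing limit of psh functions is psh.

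\textbf{Main obstacle.} The only delicate point is keeping the signs straight in the curvature identity $\partial\overline\partial|s|^2_h$, so that Griffiths \emph{semi}negativity enters with the correct sign. Once that is checked, the rest is standard.
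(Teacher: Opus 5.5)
Your proof is correct and follows the paper's argument: the same curvature identity for $\sqrt{-1}\,\partial\overline\partial\log|s|^2_h$ off the zero set, Cauchy--Schwarz together with Griffiths seminegativity, and then the extension theorem for psh functions locally bounded above (with the same references). Your alternative via the smooth functions $\log(|s|^2_h+\varepsilon)$ decreasing to $\log|s|^2_h$ is also valid, and it avoids the extension theorem entirely.
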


\begin{proof}[Proof of Lemma \ref{x-lem6.9}]
Let $\{\bullet, \bullet\}_h$ denote the sesquilinear pairing
\[
C^\infty(X, \wedge^p T^\vee_X \otimes E) \times 
C^\infty(X, \wedge^q T^\vee_X \otimes E) \to 
C^\infty(X, \wedge^{p+q} T^\vee_X \otimes \mathbb{C})
\]
induced by the Hermitian metric $h$.

More precisely, let $\Omega$ be an open subset of $X$, 
and assume that $E|_\Omega$ is trivialized 
as $\Omega\times \mathbb C^r$ by a $C^\infty$ frame $\{e_\lambda\}$. 
Then for any sections
\[
u = \sum_\lambda u_\lambda \otimes e_\lambda,
\quad 
v = \sum_\mu v_\mu \otimes e_\mu,
\]
we have
\[
\{u, v\}_h = \sum_{\lambda, \mu} u_\lambda \wedge 
\overline v_\mu \cdot h(e_\lambda, e_\mu).
\]

Let $D_h = D'_h + \overline{\partial}$ denote the Chern 
connection associated with $(E, h)$. 
We may assume that $s\not\equiv 0$. 
Outside the zero set of $s$, we have 
\[
\begin{split}
\sqrt{-1}\partial \overline\partial \log |s|^2_h&= \sqrt{-1} \frac{\{D'_hs, D'_hs\}_h}{|s|^2_h} 
-\sqrt{-1} \frac{\{D'_hs, s\}_h\wedge \{s, D'_hs\}_h} {|s|^4_h} 
-\frac{\{\sqrt{-1} \Theta_h(E)s, s\}_h}{|s|^2_h} \\ 
&\geq -\frac{\{\sqrt{-1} \Theta_h(E)s, s\}_h}{|s|^2_h}\geq 0. 
\end{split}
\] 
We note that the first inequality is due to Cauchy--Schwarz inequality and 
the second one holds since $\sqrt{-1} \Theta_h (E)$ is Griffiths seminegative. 
Thus we have 
\[
\sqrt{-1} \partial \overline{\partial} \log |s|^2_h \geq 0
\]
outside the zero set of $s$. That is, 
$\log |s|^2_h$ is subharmonic on $X \setminus \{s = 0\}$. 

Moreover, since $\log |s|^2_h$ is 
locally bounded from above, it extends to a subharmonic function on all of $X$ (see, for example, 
\cite[(3.3.41) Theorem]{noguchi-ochiai} or  
\cite[Chapter~I, (5.24) Theorem]{demailly}). 

This completes the proof of Lemma \ref{x-lem6.9}.
\end{proof}

We end this section with a very important remark. 

\begin{rem}\label{x-rem6.10}
In \cite[21.2.~Twist of the metric of an acceptable bundle]{mochizuki4}, Mochizuki sets 
$\tau(\bm a,N)
 :=\chi(\bm a,-N)$ and defines $h_{\bm a,N}
 := h\,e^{-\tau(\bm a,N)}$.
Thus $h(\bm a,N)=h_{\bm a,-N}$ in our notation.
If $N$ is sufficiently large, Corollary~\ref{x-cor6.7} shows that
$(E,h(\bm a,N))$ is Nakano positive and
$(E,h(\bm a,-N))$ is Griffiths negative.
In Mochizuki's notation, the roles of $N$ and $-N$ are reversed.
We find our convention more natural, and therefore adopt $\chi(\bm a,N)$
in this paper.
\end{rem}

\section{Some preliminary estimates}\label{x-sec7}

In this section, we collect several preliminary estimates for acceptable 
bundles on a partially punctured polydisk.  
Although all the results in this section can be found in \cite[21.2]{mochizuki4}, 
we present them here in detail, since we adopt a different convention 
(see Remark~\ref{x-rem6.10}).

\medskip

We set
\[
X := \Delta^n = \{ (z_1,\ldots,z_n) \in \mathbb{C}^n \mid |z_i| < 1 \text{ for all } i \}
\]
and
\[
D := \sum_{i=1}^l D_i,
\]
where $D_i := \{ z_i = 0 \}$ for each $i$.  
We put $X^* := X \setminus D$.  
Then clearly
\[
X^* = (\Delta^*)^l \times \Delta^{\,n-l}.
\]

For $i = 1,\ldots,n$, let 
\[
\pi_i \colon X^* \to D_i
\]
denote the natural projection.  
We set
\[
D_i^\circ := D_i \setminus \bigcup_{\substack{j\neq i \\ j \leq l}} D_j.
\]
For any point $P \in D_i^\circ$, we see that 
\[
\pi_i^{-1}(P) \simeq 
\begin{cases}
\Delta^*, & 1 \leq i \leq l,\\[2pt]
\Delta,   & l+1 \leq i \leq n.
\end{cases}
\]

For $0 < R \leq 1$, we define
\[
X(R) := \{ (z_1,\ldots,z_n) \in X \mid |z_i| < R \text{ for all } i \},
\]
and set $X^*(R) := X(R) \cap X^*$.

\medskip

As a direct consequence of Lemma~\ref{x-lem6.9}, we obtain the following corollary.

\begin{cor}[{\cite[Corollary~21.2.5]{mochizuki4}}]\label{x-cor7.1}
Let $(E,h)$ be an acceptable vector bundle on $X^*$.  
Assume that $(E,h(0,-N_0))$ is Griffiths seminegative.  
Let $F$ be a holomorphic section of $E$ on $X^*(R)$ such that 
\[
\|F|_{X^*(R)}\|_{h(\bm{a},-N)} < \infty
\]
for some $0<R\leq 1$.  
Here
\[
\|F|_{X^*(R)}\|_{h(\bm{a},-N)}^2
:= \int_{X^*(R)} |F|_{h(\bm{a},-N)}^2 \, \dvol_{X-D},
\]
where $\dvol_{X-D}$ is the volume form on 
$X^*=(\Delta^*)^l\times \Delta^{n-l}$ with respect to the Poincaré metric $\omega_P$, 
that is, 
\[
\dvol _{X-D} =\frac{\omega^n_P}{n!}.
\]

Then for every $1 \le j \le l$ and every $P \in D_j^\circ$, we have
\[
\int_{\pi_j^{-1}(P)\cap X^*(R')}
\left| F|_{\pi_j^{-1}(P)\cap X^*(R')} \right|^2_{h(\bm{a},-M)}
\, \dvol_{\pi_j^{-1}(P)} < \infty,
\]
for any $0 < R' < R$ and any $M \ge \max\{N_0, N\}$,  
where $\dvol_{\pi_j^{-1}(P)}$ is the volume form induced by the restriction
$\omega_P|_{\pi_j^{-1}(P)}$.

More precisely, there exists a constant $C>0$ such that 
\[
\int_{\pi_j^{-1}(P)\cap X^*(R')}
\left| F|_{\pi_j^{-1}(P)\cap X^*(R')} \right|^2_{h(\bm{a},-M)}
\, \dvol_{\pi_j^{-1}(P)} < C\| F|_{X^*(R)}\|^2_{h(\bm a, -N)}<\infty. 
\]

\end{cor}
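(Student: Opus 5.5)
The plan is to use Lemma~\ref{x-lem6.9}, which says $\log|F|^2_{h(\bm a,-M)}$ is plurisubharmonic, and then compare a point value with averages. First the setup. Since $(E,h(0,-N_0))$ is Griffiths seminegative and $\sqrt{-1}\Theta_{h(\bm a,-M)}(E)=\sqrt{-1}\Theta_h(E)-M\omega_P\otimes\Id_E$ (Definition~\ref{x-def6.6}), the bundle $(E,h(\bm a,-M))$ is also Griffiths seminegative for every $M\ge N_0$. Hence, by Lemma~\ref{x-lem6.9}, $\log|F|^2_{h(\bm a,-M)}$ is plurisubharmonic on $X^*(R)$. Then $|F|^2_{h(\bm a,-M)}=\exp(\log|F|^2_{h(\bm a,-M)})$ is plurisubharmonic, being a convex increasing function of a psh function. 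Moreover $M\ge N$ gives $|F|^2_{h(\bm a,-M)}\le C_0|F|^2_{h(\bm a,-N)}$ on $X^*(R)$. This holds because the ratio of the two weights is $\prod_j(-\log|z_j|^2)^{-(M-N)}\prod_k(1-|z_k|^2)^{-(M-N)}$, which is bounded on $X^*(R)$ for $R<1$. (If $R=1$, I would replace $R$ by some $R''$ with $R'<R''<R$.) So it suffices to prove the estimate with $M$ in place of $N$.

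Next comes the sub-mean value step in the transverse directions. Fix $j$ and $P\in D_j^\circ$, and write a point of $\pi_j^{-1}(P)\cap X^*(R')$ as $z=(z_j,P)$. The function $u:=|F|^2_{h(\bm a,-M)}$ is psh in the variables $z_k$, $k\neq j$. Apply the sub-mean value inequality on a polydisk $\Delta_{\mathrm{tr}}(z)$ in the $z_k$ directions, $k\ne j$. Choose this polydisk comparable to the Poincaré geometry and of uniform size:
\begin{itemize}
\item for $k\le l$, take an annulus-type region in $\log|z_k|$ of fixed Poincaré size, obtained by pulling back to the upper half plane via $z_k=e^{\sqrt{-1}w}$;
\item for $k>l$, take an ordinary disk of radius $\delta(R-R')$.
\end{itemize}
Since $u$ is psh on the universal cover coordinates as well, sub-mean value holds there, and the Poincaré volume element of such a region is bounded above and below by constants. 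This gives
\[
u(z)\le C_1\int_{\Delta_{\mathrm{tr}}(z)} u\,\dvol_{\mathrm{tr}}.
\]

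Then integrate this in $z_j$ over $\pi_j^{-1}(P)\cap X^*(R')$ against $\dvol_{\pi_j^{-1}(P)}$. Fubini gives a bound by $C_1$ times the integral of $u$ over a region contained in $X^*(R)$, with respect to $\omega_P^n/n!$, because $\omega_P$ is a product metric. This yields the bound $C\|F|_{X^*(R)}\|^2_{h(\bm a,-N)}$.

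The main obstacle is making the transverse averaging uniform near the other punctures $z_k=0$, $k\le l$, $k\ne j$. Note that $P\in D_j^\circ$ has $P_k\ne0$, but the constant should not blow up as $P_k\to0$. For this I rely on the covering trick: the Poincaré metric on $\Delta^*$ is invariant under the covering from the upper half plane, so disks of fixed hyperbolic radius have uniformly comparable Euclidean mean-value properties after lifting. One also needs these disks to stay inside $\{|z_k|<R\}$; this costs only a shrinking $R'<R$ and the fixed hyperbolic radius.
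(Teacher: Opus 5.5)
Your core argument is the paper's proof. Lemma~\ref{x-lem6.9} makes $u=|F|^2_{h(\bm a,-M)}$ plurisubharmonic for $M\ge N_0$. You then apply the sub-mean value inequality in the variables transverse to $\pi_j^{-1}(P)$, integrate along the fiber (Fubini plus the product form of $\omega_P$), and finish with $|F|^2_{h(\bm a,-M)}\le C_0|F|^2_{h(\bm a,-N)}$. The paper does the transverse step with a small Euclidean ball $U\subset D_j^\circ$ centred at $P$. It uses that each one-variable Poincaré density is bounded below by a positive constant on the unit disk, so $\dvol_U\cdot\dvol_{\pi_j^{-1}(P)}\le C_1\dvol_{X-D}$, and it obtains $C=C_1C_2/\mathrm{Vol}(U)$. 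This constant depends on $P$.

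The step you single out as the main obstacle is not needed, and your mechanism for it is wrong.

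\emph{Not needed.} The statement reads ``for every $P\in D_j^\circ$ \dots\ there exists $C$'', so $C$ may depend on $P$. Only independence from $F$ matters, and per-$P$ constants are all that Corollary~\ref{x-cor7.5} uses.

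\emph{Wrong.} A disk of fixed hyperbolic radius in the upper half-plane, centred at height $y_0=-\log|P_k|$, has Euclidean radius comparable to $y_0$. For large $y_0$ the covering map $w\mapsto e^{\sqrt{-1}w}$ is far from injective on it, and its image covers an annulus with multiplicity of order $y_0$. Pushing the sub-mean value inequality down from the cover therefore only gives $u(z)\le C\,y_0\int u\,\dvol$ over a region of $X^*(R)$. It does not give $u(z)\le C_1\int_{\Delta_{\mathrm{tr}}(z)}u\,\dvol_{\mathrm{tr}}$ with $C_1$ uniform.

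In fact no uniform inequality of that form can hold near the cusp. Take $u\equiv 1$, which is plurisubharmonic. The annulus swept out by such a hyperbolic disk around $P_k$ has Poincaré area $\pi(1/s_1-1/s_2)$ with $s_1,s_2$ comparable to $y_0$, which is $O(1/(-\log|P_k|))\to 0$. So the right-hand side tends to $0$ while the left-hand side stays $1$.

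The fix is simply to drop uniformity. For fixed $P$, take the transverse polydisk of Euclidean radius smaller than $\min_{k\le l,\,k\ne j}|P_k|$ and smaller than $R''-R'$ for some $R'<R''<R$. This also keeps the averaged region inside $X^*(R'')$, which your weight comparison needs when $R=1$. With that choice your argument goes through verbatim and coincides with the paper's.
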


\begin{proof}[Proof of Corollary~\ref{x-cor7.1}]
Since $M \ge N_0$, Lemma~\ref{x-lem6.9} implies that
\[
|F|_{h(\bm{a},-M)}^2
= \exp(\log |F|_{h(\bm{a},-M)}^2)
\]
is plurisubharmonic.  
Hence for any complex submanifold $V$ of $X^*(R)$, 
the restriction $|F|_{h(\bm{a},-M)}^2|_V$ is subharmonic.

Let $U$ be a small ball centered at $P$ in $D_j^\circ$.  
Then $\pi_j^{-1}(U) \simeq \pi_j^{-1}(P) \times U$.  
Let $\dvol_U$ be the Euclidean volume form on $U$.  
There exists a constant $C_1>0$ such that
\begin{equation}\label{x-eq7.1}
\dvol_U \cdot \dvol_{\pi_j^{-1}(P)}
\le C_1 \, \dvol_{X-D}
\quad \text{on } \pi_j^{-1}(U) \cap X^*(R').
\end{equation}

For $Q \in \pi_j^{-1}(P) \cap X^*(R')$, the function  
$\left|F|_{\{Q\}\times U}\right|^2_{h(\bm{a},-M)}$ is plurisubharmonic, hence
\begin{equation}\label{x-eq7.2}
|F|_{h(\bm{a},-M)}^2(Q,P)
\le \frac{1}{\mathrm{Vol}(U)}
\int_{\{Q\}\times U}
\left|F|_{\{Q\}\times U}\right|^2_{h(\bm{a},-M)} \, \dvol_U
\end{equation}
by the mean value inequality, where
\[
\mathrm{Vol}(U) := \int_U 1 \, \dvol_U < \infty.
\]

Since $M \ge N$, there exists a constant $C_2>0$ such that
\begin{equation}\label{x-eq7.3}
|F|_{h(\bm{a},-M)}^2 \le C_2 |F|_{h(\bm{a},-N)}^2
\quad \text{on } X^*(R').
\end{equation}

Combining \eqref{x-eq7.1}, \eqref{x-eq7.2}, and \eqref{x-eq7.3}, we obtain
\[
\begin{aligned}
&\int_{\pi_j^{-1}(P)\cap X^*(R')}
\left|F|_{\pi_j^{-1}(P)\cap X^*(R')} \right|_{h(\bm{a},-M)}^2
\, \dvol_{\pi_j^{-1}(P)} \\
&\le \frac{1}{\mathrm{Vol}(U)}
\int_{(\pi_j^{-1}(P)\times U)\cap X^*(R')}
|F|_{h(\bm{a},-M)}^2 \, \dvol_U \, \dvol_{\pi_j^{-1}(P)} \\
&\le \frac{C_1}{\mathrm{Vol}(U)}
\int_{(\pi_j^{-1}(P)\times U)\cap X^*(R')}
|F|_{h(\bm{a},-M)}^2 \, \dvol_{X-D} \\
&\le \frac{C_1 C_2}{\mathrm{Vol}(U)}
\|F|_{X^*(R')}\|_{h(\bm{a},-N)}^2
< \infty.
\end{aligned}
\] 
More precisely, the first inequality follows from the mean value inequality \eqref{x-eq7.2}, 
the second one follows from \eqref{x-eq7.1}, the third one is due to \eqref{x-eq7.3}, 
and the final one follows from the assumption. 
This completes the proof. 
\end{proof}

The following lemma is also a direct consequence of the mean value inequality 
for subharmonic functions.

\begin{lem}[{\cite[Lemma~21.2.6]{mochizuki4}}]\label{x-lem7.2} 
Let $(E,h)$ be an acceptable vector bundle on $X^* = \Delta^*$.  
Let $f$ be a holomorphic section of $E$ on
\[
\Delta^*(R) := \{ z \in \mathbb{C} \mid 0 < |z| < R \}
\]
for some $0 < R \le 1$, and assume that
\[
\| f|_{\Delta^*(R)} \|_{h(b,-M_0)} < \infty
\]
for some $b, M_0 \in \mathbb{R}$.  
Suppose that $(E,h(0,-N_0))$ is Griffiths seminegative.  
Let $M \ge \max\{N_0, M_0 + 2\}$.  
Then
\[
|f(z)|_h^2 
\le 
B \cdot 
\| f|_{\Delta^*(R)} \|_{h(b,-M_0)}^2 
\, |z|^{-2b} \bigl(-\log |z|\bigr)^{M}
\]
holds on $X^*(R/5) = \Delta^*(R/5)$, where $B>0$ is independent of $f$.
\end{lem}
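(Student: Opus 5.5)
The plan is to apply the mean value inequality to the subharmonic function $|f|^2_{h(b,-M)}$ on a disk around $z$ whose radius is comparable to $|z|$. Since $M\ge N_0$, the metric $h(b,-M)=h(0,-N_0)\,e^{-\chi(b,-(M-N_0))}$ differs from the Griffiths seminegative metric $h(0,-N_0)$ by two factors. The first is $e^{-\chi(b,0)}=|z|^{2b}$, which is pluriharmonic. The second is $(-\log|z|^2)^{-(M-N_0)}$ (with no $1-|z_k|^2$ factors, since $n=l=1$), which contributes curvature $-(M-N_0)\omega_P\le 0$. Hence $(E,h(b,-M))$ is Griffiths seminegative. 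By Lemma~\ref{x-lem6.9}, $\log|f|^2_{h(b,-M)}$ is subharmonic on $\Delta^*(R)$, and so is $|f|^2_{h(b,-M)}$.

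Fix $z$ with $0<|z|<R/5$ and consider the disk $B_z:=\{w \mid |w-z|<|z|/2\}$. Every $w\in B_z$ satisfies $|z|/2<|w|<3|z|/2<R$, so $B_z\subset\Delta^*(R)$. The mean value inequality gives
\[
|f(z)|^2_{h(b,-M)}\le \frac{4}{\pi|z|^2}\int_{B_z}|f|^2_{h(b,-M)}\,\frac{\sqrt{-1}}{2}dw\wedge d\overline w .
\]
I then convert the Euclidean measure into the Poincaré volume form, which on $B_z$ equals $\frac{\sqrt{-1}\,dw\wedge d\overline w}{|w|^2(-\log|w|^2)^2}$. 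This gives $dA_{\rm eucl}\le C|z|^2(-\log|z|)^2\,\dvol$ on $B_z$, because $|w|\asymp|z|$ and $-\log|w|\asymp-\log|z|$ uniformly for $|z|<R/5\le 1/5$. The restriction to $R/5$ is exactly what makes $\log(3/2\cdot|z|)$ stay comparable to $\log|z|$. Next I compare $|f|^2_{h(b,-M)}$ with $|f|^2_{h(b,-M_0)}$: their ratio is $(-\log|w|^2)^{-(M-M_0)}$, which is at most $C(-\log|z|)^{-(M-M_0)}$ on $B_z$. Combining these,
\[
|f(z)|^2_{h(b,-M)}\le C'(-\log|z|)^{2-(M-M_0)}\,\|f|_{\Delta^*(R)}\|^2_{h(b,-M_0)} .
\]
Since $M\ge M_0+2$, the exponent is nonpositive, and because $-\log|z|\ge\log 5>1$ the factor is bounded by a constant. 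In fact one does not even need that sign.

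To finish, I unwind the twist: $|f(z)|^2_h=|f(z)|^2_{h(b,-M)}\,|z|^{-2b}(-\log|z|^2)^{M}$. This yields the claimed bound $B\,\|f\|^2|z|^{-2b}(-\log|z|)^M$, with $B$ depending only on $b,M,M_0$ and the constants above, and not on $f$.

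The main point needing care is the uniform comparability of the weights $|w|^{2b}$, $(-\log|w|)$ and the Poincaré density on $B_z$; all of this is elementary once the radius $|z|/2$ and the bound $|z|<R/5$ are fixed. The subharmonicity itself comes directly from Corollary~\ref{x-cor6.7} and Lemma~\ref{x-lem6.9}.
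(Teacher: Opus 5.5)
Your argument is correct and is essentially the paper's proof: apply the mean value inequality for the subharmonic weight $|f|^2_{h(b,-M)}$ on the disk $|w-z|\le |z|/2$, compare Euclidean measure with the Poincar\'e volume, and absorb the factor $|w|^2(-\log|w|^2)^2$ using $M\ge M_0+2$. The paper instead applies the mean value inequality to $\log|f|^2_{h(b,-M)}$ followed by Jensen, and uses the one-sided bounds $|w|\le\frac32|z|$ and $-\log|w|^2>2$ in place of your two-sided comparability; these differences are cosmetic.

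One correction: your aside that the sign of $2-(M-M_0)$ is not needed is false. Without $M\ge M_0+2$ the factor $(-\log|z|)^{2-(M-M_0)}$ is unbounded as $z\to 0$, and you would only get the exponent $M_0+2$ in the final bound, not $M$.
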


\begin{proof}[Proof of Lemma~\ref{x-lem7.2}]
Since $M \ge N_0$, Lemma~\ref{x-lem6.9} implies that 
$\log |f(z)|_{h(b,-M)}$ is subharmonic on $\Delta^*(R)$.  
Let $\dvol$ denote the Euclidean volume form, and 
let $\dvol_{\omega_P}$ be the volume form associated to the 
Poincaré metric.  
For $0 < |z| \le R/5 \le 1/5$, we have
\[
\begin{aligned}
\log |f(z)|_{h(b,-M)}
&\le \frac{4}{\pi |z|^2}
\int_{|w-z|\le |z|/2} \log |f(w)|_{h(b,-M)}^2 \, \dvol  
\\
&\le 
\log \left(
\frac{4}{\pi |z|^2}
\int_{|w-z|\le |z|/2} |f(w)|_{h(b,-M)}^2 \, \dvol
\right)
\\
&\le 
\log \left(
\frac{9}{\pi}
\int_{|w-z|\le |z|/2} 
\frac{|f(w)|_{h(b,-M)}^2}{|w|^2}
\, \dvol
\right)
\\
&\le
\log \left(
\frac{9}{\pi}
\int_{|w-z|\le |z|/2}
|f(w)|_{h(b,-M_0)}^2 
\, \dvol_{\omega_P}
\right)
\\
&\le 
\log \left(
\frac{9}{\pi}
\| f|_{\Delta^*(R)} \|_{h(b,-M_0)}^2
\right).
\end{aligned}
\]
The first inequality is the mean value inequality;  
the second follows from Jensen's inequality;  
the third uses $|w|\le \tfrac{3}{2}|z|$;  
the fourth follows from $M \ge M_0 + 2$ and $\log|w| < -1$ for 
$|w|\le \tfrac{3}{2}|z| \le \tfrac{3}{10} < e^{-1}$.  
This proves the desired estimate.
\end{proof}

Although the following lemma is elementary, 
it plays a crucial role in this paper.

\begin{lem}[{\cite[Lemma~21.2.7]{mochizuki4}}]\label{x-lem7.3}
Let $(E,h)$ be an acceptable vector bundle on $X^* = \Delta^*$.  
Let $f$ be a holomorphic section of $E$ such that
\[
|f|_h = O\!\left( \frac{1}{|z|^{a+\varepsilon}} \right)
\]
for every $\varepsilon > 0$ on $\Delta^*(R)$ for some $0 < R \le 1$.  
Let $N_0$ be such that $(E,h(0,-N_0))$ is Griffiths seminegative, 
and let $M \ge N_0$.  
Define
\[
H(z) := |f(z)|_h^2 \, |z|^{2a} \, \bigl(-\log|z|\bigr)^{-M}.
\]
Then $H(z)$ is bounded near the origin.  
More precisely,
\[
\max_{|z|\le R'} H(z)
= 
\max_{|z|=R'} H(z)
\]
for every $0 < R' < R$.
\end{lem}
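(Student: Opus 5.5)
Observe first that $H(z) = |f(z)|^2_{h(a,-M)}$ up to a positive constant factor. Indeed, by Definition~\ref{x-def6.6} with $n=l=1$,
\[
h(a,-M) = h\,|z|^{2a}(-\log|z|^2)^{-M},
\]
and $(-\log|z|^2)^{-M}=2^{-M}(-\log|z|)^{-M}$. Since $M\ge N_0$, the metric $h(a,-M)=h(0,-N_0)\cdot |z|^{2a}(-\log|z|^2)^{-(M-N_0)}$. The extra factor has $\sqrt{-1}\partial\overline\partial$ equal to $-(M-N_0)\omega_P\le 0$, so $(E,h(a,-M))$ is still Griffiths seminegative; this is also Corollary~\ref{x-cor6.7}. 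Lemma~\ref{x-lem6.9} then shows that $u:=\log H$ is subharmonic on $\Delta^*(R)$.

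The plan is to extend $u$ across the origin and apply the maximum principle. By hypothesis, for every $\varepsilon>0$ we have
\[
H(z)\le C_\varepsilon |z|^{-2\varepsilon}(-\log|z|)^{-M}
\]
near $0$. Hence $u(z)\le -2\varepsilon\log|z| + O(1)$, which is not yet bounded above, so the removable singularity theorem does not apply directly. Instead, for each $\varepsilon>0$ consider
\[
u_\varepsilon(z):=u(z)+2\varepsilon\log|z| .
\]
This is still subharmonic on $\Delta^*(R)$, because $\log|z|$ is harmonic there. Moreover $u_\varepsilon(z)\le \log C_\varepsilon - M\log(-\log|z|)\to-\infty$ as $z\to 0$ (if $M>0$; if $M\le 0$, use $\varepsilon/2$ in the hypothesis instead to get $u_\varepsilon\to-\infty$). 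So $u_\varepsilon$ is bounded above near $0$ and extends to a subharmonic function on $\Delta(R)$, by the same extension theorem quoted in the proof of Lemma~\ref{x-lem6.9}.

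The maximum principle on the disk $|z|\le R'$ then gives
\[
u(z)+2\varepsilon\log|z|\le \max_{|w|=R'}u(w)+2\varepsilon\log R'
\]
for $0<|z|\le R'$. Fix $z$ and let $\varepsilon\to 0$ to obtain $u(z)\le\max_{|w|=R'}u(w)$, i.e.\ $H(z)\le \max_{|w|=R'}H(w)$. This gives boundedness and the stated equality of maxima, since the circle lies in the closed disk. (If $f\equiv 0$ everything is trivial.)

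The only delicate point is the growth $|z|^{-2\varepsilon}$ allowed by the hypothesis. This is handled by the harmonic correction $2\varepsilon\log|z|$ and the limit $\varepsilon\to0$, a Phragmén--Lindelöf style argument, rather than by a direct appeal to removability.
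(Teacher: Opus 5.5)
Your proof is correct and follows the paper's argument. The paper sets $H_\varepsilon(z):=H(z)|z|^{2\varepsilon}$, whose logarithm is exactly your $u_\varepsilon$; it extends $\log H_\varepsilon$ subharmonically across the origin using Lemma~\ref{x-lem6.9} and the removable singularity theorem, applies the maximum principle on $|z|\le R'$, and lets $\varepsilon\to 0$. Your explicit check that $h(a,-M)$ remains Griffiths seminegative, and your handling of the case $M\le 0$ by applying the hypothesis with $\varepsilon/2$, are small points of extra care that the paper leaves implicit.
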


\begin{proof}[Proof of Lemma~\ref{x-lem7.3}]
For $\varepsilon > 0$, set 
\[
H_\varepsilon(z) := H(z)\, |z|^{2\varepsilon}.
\]
By Lemma~\ref{x-lem6.9}, $\log H_\varepsilon(z)$ is subharmonic on 
$\Delta^*(R)$.  
The assumption on $f$ implies 
\[
\lim_{z\to 0} \log H_\varepsilon(z) = -\infty.
\]
Hence $\log H_\varepsilon$ extends as a subharmonic function to $\Delta(R)$
(see \cite[(3.3.25) Theorem]{noguchi-ochiai}).  
Therefore,
\begin{equation}\label{x-eq7.4}
\max_{|z|\le R'} H_\varepsilon(z)
=
\max_{|z|=R'} H_\varepsilon(z).
\end{equation}

Since $H(z)$ is continuous on $\{ |z|=R'\}$ and 
$H_{\varepsilon_1}(z) \le H_{\varepsilon_2}(z)$ holds for 
$0 \le \varepsilon_2 \le \varepsilon_1 \le 1$,  
letting $\varepsilon\to 0$ in \eqref{x-eq7.4} yields the boundedness of $H(z)$ 
on $\{|z|\le R'\}$.  
This completes the proof.
\end{proof}

Proposition~\ref{x-prop7.4} is a direct consequence of 
Lemma~\ref{x-lem7.3}, and it will play a crucial role in the following sections.

\begin{prop}[{\cite[Proposition~21.2.8]{mochizuki4}}]\label{x-prop7.4}
Let $(E,h)$ be an acceptable vector bundle 
on $X^* = X \setminus D=(\Delta^*)^l\times \Delta^{n-l}$.  
Let $F$ be a holomorphic section of $E$ on $X^*(R)$ for some $0 < R \le 1$.  
Assume that there exist real numbers $a_i$ {\em{(}}$1 \le i \le l${\em{)}} such that:

\begin{itemize}
\item 
For every $\varepsilon > 0$, every $1 \le i \le l$, and every 
$P \in D_i^\circ$, we have
\[
\left|F|_{\pi_i^{-1}(P)}\right|_h
= O\!\left(\frac{1}{|z_i|^{a_i +\varepsilon}}\right).
\]
\end{itemize}

Let $N_0$ be such that $(E, h(0,-N_0))$ is Griffiths seminegative, 
and let $M \ge N_0$.  
Fix any real number $0 < R' < R$.  
Then there exists a constant $B > 0$, independent of $F$, such that
\[
|F|_h^2
\le 
B \cdot 
\prod_{j=1}^l \left( |z_j|^{-2a_j} \, (-\log |z_j|)^M \right)
\cdot 
\max_{\substack{|z_j| = R' \\ 1 \le j \le l}} |F|_h^2
\quad \text{on } X^*(R').
\]
\end{prop}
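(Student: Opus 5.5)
We apply Lemma~\ref{x-lem7.3} one variable at a time and induct on the number of punctured coordinates. Fix $0<R'<R$ and a point $z=(z_1,\dots,z_n)\in X^*(R')$. The goal is to push each punctured coordinate $z_j$, $1\le j\le l$, out to the circle $|z_j|=R'$. Each step costs a factor $|z_j|^{-2a_j}(-\log|z_j|)^M$ and a multiplicative constant.

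\emph{The one-variable estimate.} Fix $1\le i\le l$ and freeze all coordinates except $z_i$. The slice $\pi_i^{-1}(P)\cap X^*(R)$ is a punctured disk, and $P\in D_i^\circ$ because the other coordinates are fixed nonzero (for $k\le l$) or arbitrary (for $k>l$). On this disk the restriction $(E,h)|_{\pi_i^{-1}(P)}$ is an acceptable bundle on a punctured disk: the curvature of a restriction to a complex submanifold is dominated by the restricted curvature (Griffiths seminegativity of $h(0,-N_0)$ restricts), and $\omega_P$ restricts to the one-variable Poincar\'e metric plus nothing else.

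What we actually use is only that $\bigl(E|_{\pi_i^{-1}(P)},h(0,-N_0)\bigr)$ is Griffiths seminegative. This holds because Griffiths seminegativity passes to submanifolds, and the restriction of $\chi(0,-N_0)$ differs from the one-variable $\chi(0,-N_0)$ only by a constant factor. The proof of Lemma~\ref{x-lem7.3} uses only this seminegativity and subharmonicity, so it applies verbatim. Together with the hypothesis $|F|_{\pi_i^{-1}(P)}|_h=O(|z_i|^{-a_i-\varepsilon})$, it gives
\[
|F(z)|_h^2\le |z_i|^{-2a_i}(-\log|z_i|)^M\cdot \max_{|w_i|=R'}|F|_h^2\,|w_i|^{2a_i}(-\log|w_i|)^{-M},
\]
where the maximum is over points differing from $z$ only in the $i$-th coordinate. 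The factor $|w_i|^{2a_i}(-\log R')^{-M}$ on the circle $|w_i|=R'$ is a constant $c_i$ depending only on $R',a_i,M$.

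\emph{Iteration.} Apply this estimate with $i=1$ at $z$. Then, for each point $(w_1,z_2,\dots)$ with $|w_1|=R'$, apply it with $i=2$, and so on up to $i=l$. At stage $i$ the base point has $|w_1|=\dots=|w_{i-1}|=R'$, the remaining punctured coordinates are still nonzero, and $|w_k|<R$ throughout. Hence the point still lies in $X^*(R)$, its $z_i$-slice meets $D_i^\circ$, and the growth hypothesis is available there. After $l$ steps we obtain
\[
|F(z)|_h^2\le \prod_j c_j\cdot\prod_{j=1}^l |z_j|^{-2a_j}(-\log|z_j|)^M\cdot\max_{|w_j|=R',\,1\le j\le l}|F|_h^2,
\]
where in the last maximum the coordinates $z_{l+1},\dots,z_n$ are those of $z$, so the bound is by the maximum in the statement. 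Setting $B=\prod_j c_j$ gives a constant independent of $F$.

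\emph{Main obstacle.} The delicate point is justifying the one-variable lemma on each slice. We must check that the slices genuinely satisfy the seminegativity needed for subharmonicity of $\log\bigl(|F|^2_h|z_i|^{2a_i+2\varepsilon}(-\log|z_i|)^{-M}\bigr)$. Concretely, on $\pi_i^{-1}(P)$ the factors $(-\log|z_k|^2)^{-N}$ for $k\ne i$ are constants, so subharmonicity follows from the plurisubharmonicity of $\log|F|^2_{h(\bm a,-M)}$ given by Lemma~\ref{x-lem6.9} and Corollary~\ref{x-cor6.7}, restricted to the slice. We must also check that the maximum principle step of Lemma~\ref{x-lem7.3} runs inside $|z_i|\le R'<R$, with $\varepsilon\to0$ at the end.
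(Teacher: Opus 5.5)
Your proposal is correct and follows the paper's own argument: the paper's proof simply applies Lemma~\ref{x-lem7.3} to each coordinate $z_i$ in turn, with subharmonicity on each slice obtained by restricting the plurisubharmonic function $\log|F|^2_{h(\bm a,-M)}$, whose factors in $z_k$ for $k\neq i$ are constant on the slice. Your write-up spells out what the paper leaves implicit: each intermediate base point stays in $X^*(R)$ with its $z_i$-slice meeting $D_i^\circ$, and the circle factors $R'^{2a_i}(-\log R')^{-M}$ multiply to a constant $B$ independent of $F$.
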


\begin{proof}[Proof of Proposition~\ref{x-prop7.4}]
Set
\[
H(z_1,\ldots,z_l)
:= 
|F|_h^2 \cdot 
\prod_{j=1}^l 
\left( |z_j|^{2a_j} \, (-\log |z_j|)^{-M} \right).
\]
Applying Lemma~\ref{x-lem7.3} to each coordinate $z_i$ successively, 
we obtain the desired inequality.  
This completes the proof.
\end{proof}

Similarly, we obtain the following:

\begin{cor}[{\cite[Corollary~21.2.9]{mochizuki4}}]\label{x-cor7.5}
Let $(E,h)$ be an acceptable vector bundle on $X^*=(\Delta^*)^l
\times \Delta^{n-l}$.  
Suppose that $F$ is a holomorphic section of $E$ on $X^*(R)$ 
for some $0 < R \le 1$, and that
\[
\| F|_{X^*(R)} \|_{h(\bm{a},-M_0)} < \infty.
\]
Let $N_0$ be such that $(E,h(0,-N_0))$ is Griffiths seminegative, 
and let $M \ge \max\{ N_0, M_0 + 2 \}$.  
Then, for any $0 < R' < R$, we have on $X^*(R')$:
\[
|F|_h^2 
\le 
B \cdot 
\prod_{j=1}^l \left( |z_j|^{-2a_j} \, (-\log |z_j|)^M \right)
\cdot 
\max_{\substack{|z_j| = R' \\ 1 \le j \le l}} |F|_h^2,
\]
where $B > 0$ is independent of $F$.  
In particular, $F \in {}_{\bm{a}} E$.
\end{cor}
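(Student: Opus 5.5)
The plan is to reduce Corollary~\ref{x-cor7.5} to Proposition~\ref{x-prop7.4}. Proposition~\ref{x-prop7.4} gives exactly the displayed inequality once its hypothesis is checked. So it suffices to show that for every $1\le i\le l$, every $P\in D_i^\circ$ and every $\varepsilon>0$ we have $|F|_{\pi_i^{-1}(P)}|_h=O(|z_i|^{-a_i-\varepsilon})$ near $z_i=0$. The statement $F\in{}_{\bm a}E$ then follows from the displayed bound: $\max_{|z_j|=R'}|F|_h^2$ is a finite constant by continuity on a compact torus-type set inside $X^*(R)$, and $(-\log|z_j|)^M=O(|z_j|^{-2\varepsilon})$ for every $\varepsilon>0$. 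Running the same argument for each small $R'$ covers any neighborhood of points of $X(R)$. Points of $D$ away from the origin are handled by the same estimate, since the bound is stated on all of $X^*(R')$.

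To get the one-variable estimate I would use Corollary~\ref{x-cor7.1} and then Lemma~\ref{x-lem7.2}. Fix $i$ and $P\in D_i^\circ\cap X(R')$. Corollary~\ref{x-cor7.1} applies because $(E,h(0,-N_0))$ is Griffiths seminegative and $\|F|_{X^*(R)}\|_{h(\bm a,-M_0)}<\infty$. With $M'=\max\{N_0,M_0\}$ it gives finiteness of the $L^2$ norm of $F|_{\pi_i^{-1}(P)\cap X^*(R'')}$ with respect to $h(\bm a,-M')$ and $\omega_P|_{\pi_i^{-1}(P)}$, for some $R'<R''<R$.

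On the fibre $\pi_i^{-1}(P)\simeq\Delta^*$ the coordinates $z_j$ with $j\ne i$ are frozen at nonzero values for $j\le l$. So the twisting factors $|z_j|^{2a_j}(-\log|z_j|^2)^{-M'}$ and $(1-|z_k|^2)^{-M'}$ are positive constants there. Hence the finiteness above says precisely that $\|f\|_{h_P(a_i,-M')}<\infty$ for the restricted bundle $(E|_{\pi_i^{-1}(P)},h_P)$ and $f:=F|_{\pi_i^{-1}(P)}$.

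The restriction of an acceptable bundle to the fibre is acceptable, because the curvature of a restriction to a holomorphic curve is dominated by the restricted curvature (Griffiths decreasing for subbundles is not an issue here; for a submanifold the Chern curvature simply restricts). The Poincaré metric also restricts to the one-dimensional Poincaré metric in $z_i$. Similarly, $(E|_{\pi_i^{-1}(P)},h_P(0,-N_0))$ is still Griffiths seminegative, up to the constant rescaling, which does not change curvature.

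Lemma~\ref{x-lem7.2}, with $b=a_i$ and $M\ge\max\{N_0,M'+2\}$, then gives $|f(z_i)|_h^2\le B'|z_i|^{-2a_i}(-\log|z_i|)^{M}$ near $0$. In particular $|f|_h=O(|z_i|^{-a_i-\varepsilon})$ for all $\varepsilon>0$, which is the hypothesis of Proposition~\ref{x-prop7.4}.

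The main obstacle is the bookkeeping of constants and radii. One point is the claim that $B$ is independent of $F$: Proposition~\ref{x-prop7.4}'s $B$ depends only on $R'$, the $a_j$ and $M$, and the fibrewise constants are used only qualitatively, so this is fine. The other point is the requirement $M\ge M_0+2$, which is inherited through Lemma~\ref{x-lem7.2} and is harmless since Proposition~\ref{x-prop7.4} holds for any $M\ge N_0$. Shrinking from $R$ to $R'$ (via an intermediate radius in Corollary~\ref{x-cor7.1} and the factor $1/5$ in Lemma~\ref{x-lem7.2}) only affects the fibrewise estimate near $z_i=0$, which is all that is needed.
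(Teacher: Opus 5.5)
Your proposal is correct and matches the paper's proof, which consists of the single line ``follows directly from Corollary~\ref{x-cor7.1}, Lemma~\ref{x-lem7.2}, and Proposition~\ref{x-prop7.4}''. You follow the same chain: fibrewise $L^2$ finiteness along each $\pi_i^{-1}(P)$, then the one-variable growth bound, then Proposition~\ref{x-prop7.4}. You correctly observe that the fibrewise exponent is only used qualitatively, so the final step needs only $M\ge N_0$, and that $B$ is independent of $F$.
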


\begin{proof}[Proof of Corollary~\ref{x-cor7.5}]
The desired estimate follows directly from 
Corollary~\ref{x-cor7.1}, Lemma~\ref{x-lem7.2}, 
and Proposition~\ref{x-prop7.4}.  
This completes the proof.
\end{proof}

In the following sections, we will repeatedly use 
Proposition~\ref{x-prop7.4} and Corollary~\ref{x-cor7.5}.

\section{$L^2$ extension theorem of Ohsawa--Takegoshi type}\label{x-sec8}

In this paper, we use an $L^2$ extension theorem of
Ohsawa--Takegoshi type as a black box. 
We remark that Mochizuki does not rely on the
Ohsawa--Takegoshi $L^2$ extension theorem; instead,
he develops the theory within the framework of
Andreotti--Visentini
(see \cite[21.1.~Some general results on vector bundles on
K\"ahler manifolds]{mochizuki4},
as well as \cite{av} and \cite{cornalba-griffiths}). 
The theorem stated below is a very special case of
\cite[Theorem]{ohsawa} and
\cite[Corollary~3.13]{guan-zhou}.
Since optimal constants are not needed for our purposes,
we restrict ourselves to this weaker formulation.

\begin{thm}[{see \cite[Theorem]{ohsawa} and \cite[Corollary~3.13]{guan-zhou}}]
\label{x-thm8.1}
Let $V$ be a bounded Stein open subset of $\mathbb C^n$ and 
let $(\mathcal E, h)$ be a Nakano semipositive vector bundle 
over $V$. 
Let $\varphi$ be any smooth plurisubharmonic function on $V$ and 
let $s_1, \ldots, s_m$ be linear functions such that 
\[
W:=\{x\in V\mid s_1(x)=\cdots =s_m(x)=0\}
\]
is a closed complex submanifold of codimension $m$. 
We put 
\[
c_k=(\sqrt{-1})^{k^2}
\]
for any positive integer $k$. 
Then, given a holomorphic $\mathcal E$-valued $(n-m)$-form $g$ on $W$ with 
\[
\int_W e^{-\varphi} c_{n-m} \{g, g\}_h < \infty, 
\]
for any $\varepsilon > 0$, 
there exists a holomorphic $\mathcal E$-valued $n$-form 
$G_\varepsilon$ on $V$ which coincides with 
\[
g \wedge ds_1 \wedge \cdots \wedge ds_m
\]
on $W$ and satisfies 
\[
\int_V e^{-\varphi} 
\left(1+\sum_{i=1}^m |s_i|^2\right)^{-m-\varepsilon} 
c_n \{G_\varepsilon, G_\varepsilon\}_h 
\leq \frac{C}{\varepsilon} 
\int_W e^{-\varphi} c_{n-m} \{g, g\}_h < \infty, 
\]
where $C$ is a positive constant independent of $g$. 
\end{thm}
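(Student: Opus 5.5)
This is a special case of the general Ohsawa--Takegoshi--Manivel extension theorem, and my plan is to deduce it from the version in \cite{ohsawa} and \cite[Corollary~3.13]{guan-zhou}, not to reprove the $L^2$ machinery.

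\textbf{Reduction to the codimension-$m$ extension theorem.} I will put the data in the standard form. Set $S=(s_1,\ldots,s_m)\colon V\to\mathbb C^m$. Since the $s_i$ are linear and $W$ has codimension $m$, the differential $dS$ has constant rank $m$. Hence $\Lambda^m(dS)$ is a nowhere vanishing constant $m$-vector, and the factor $|\Lambda^m dS|^{-2}$ in the general theorem is a harmless constant. Because $V$ is bounded, $|S|^2\le A$ on $V$ for some $A$. The weight $\left(1+|S|^2\right)^{-m-\varepsilon}$ is then of the form $\rho(|S|^2)$ with $\rho$ positive and decreasing, bounded below by $(1+A)^{-m-\varepsilon}$.

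The general theorem is stated for a Stein manifold (here $V$), a Nakano semipositive bundle $\mathcal E\otimes e^{-\varphi}$ (Nakano semipositivity is preserved by twisting with the psh weight $e^{-\varphi}$), and a section $S$ of a trivial rank-$m$ bundle with the metric rescaled so that $|S|\le e^{-1}$. With these hypotheses it gives a holomorphic $G$ with $G|_W = g\wedge ds_1\wedge\cdots\wedge ds_m$ and
\[
\int_V e^{-\varphi}\,\frac{c_n\{G,G\}_h}{|S|^{2m}(-\log|S|^2)^{2}} \le C\int_W e^{-\varphi}c_{n-m}\{g,g\}_h .
\]
The precise function of $|S|$ varies between versions. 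In the Guan--Zhou formulation, one may instead take the gain function $R(t)=(1+e^{-t})^{m+\varepsilon}$, whose integral constant $\int_{-\log A}^\infty R(t)^{-1}\,dt$ is of order $1/\varepsilon$. That is exactly the source of the $C/\varepsilon$.

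\textbf{Key steps.} First, check that the twisted curvature hypothesis, Nakano semipositivity of $h e^{-\varphi}$, holds, using $\sqrt{-1}\partial\overline\partial\varphi\ge0$. Second, choose the Guan--Zhou auxiliary function so that the left weight is $(1+|S|^2)^{-m-\varepsilon}$. Third, verify the admissibility condition (the ODE for the twisting functions $c(t)$ in their Theorem 2.1) and compute the constant. Fourth, remove any smoothness or boundary assumptions by exhausting $V$ by relatively compact strongly pseudoconvex domains $V_k$. Then take a weak $L^2$ limit of the extensions $G_{\varepsilon,k}$, using the uniform bound; weak limits of holomorphic forms are holomorphic and the restriction to $W$ passes to the limit.

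\textbf{Main obstacle.} The hard part is the bookkeeping in the third step: matching the specific weight $(1+\sum|s_i|^2)^{-m-\varepsilon}$ with an admissible gain function and showing the constant is $O(1/\varepsilon)$ uniformly in $g$. I would first try $c(t)=(1+e^{-t})^{-m-\varepsilon}e^{mt}$-type choices and check the required monotonicity $\int_{t}^\infty c(s)e^{-s}ds\cdot e^{t}$-condition directly. If that becomes messy, I would fall back to Ohsawa's original statement, where this weight appears explicitly, and only verify the exhaustion and limit argument.
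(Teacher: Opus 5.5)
Your top-level plan is the paper's proof. The paper's entire argument is the remark that the statement is a very special case of \cite[Theorem]{ohsawa}, with \cite[Corollaries~3.13 and~3.14]{guan-zhou} mentioned only for optimal constants, and there the weight $(1+\sum_i|s_i|^2)^{-m-\varepsilon}$ and the factor $C/\varepsilon$ already appear. So what you call your fallback is in fact the whole argument.

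The route you put first, however, contains a concrete error and a misdiagnosis. With $t=-\log|S|^2$ and $R(t)=(1+e^{-t})^{m+\varepsilon}$, the integral $\int_{-\log A}^{\infty}R(t)^{-1}\,dt$ diverges, because the integrand tends to $1$ as $t\to\infty$. You have dropped the factor $e^{-mt}=|S|^{2m}$ that compensates the singular weight $|S|^{-2m}$ built into the extension estimate. Up to a constant factor, the relevant quantity is
\[
\int_{-\log A}^{\infty}e^{-mt}\,(1+e^{-t})^{-m-\varepsilon}\,dt=\int_0^{A}\frac{u^{m-1}}{(1+u)^{m+\varepsilon}}\,du\le\frac{A^m}{m},
\]
which is bounded uniformly in $\varepsilon$. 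The blow-up like $1/\varepsilon$ as $\varepsilon\to 0$ in Ohsawa's theorem comes from the region $|S|\to\infty$; over $(0,\infty)$ the last integral is the Beta value $B(m,\varepsilon)$. That region does not exist on a bounded $V$, so your claim that this is "exactly the source of the $C/\varepsilon$" is wrong here.

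For the same reason, the step you single out as the main obstacle is not one in this setting. Rescale so that $|\lambda S|\le e^{-1}$ on $V$. The weight $|\lambda S|^{-2m}(-\log|\lambda S|^2)^{-2}$ of the Manivel--Demailly estimate you quoted is then bounded below by a positive constant on $V$, since $x\mapsto x^{2m}(\log x)^2$ is increasing on $(0,e^{-1}]$. Meanwhile $(1+|S|^2)^{-m-\varepsilon}\le 1$. Hence that estimate already gives the desired inequality with a constant independent of $g$, uniformly for $0<\varepsilon\le 1$, and this is all that Corollary~\ref{x-cor8.3} uses.

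This crude comparison falls short only if you want a single $C$ valid for all large $\varepsilon$: for $m=1$ it loses a factor $(\log\varepsilon)^2$. In that case you really do need Ohsawa's or Guan--Zhou's statement itself.
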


\begin{proof}[Proof of Theorem~\ref{x-thm8.1}]
This theorem is a direct consequence of 
\cite[Theorem]{ohsawa}. 
Readers interested in optimal constants are referred to 
\cite[Corollaries~3.13 and~3.14]{guan-zhou}. 
The above non-optimal version suffices for our purposes.
\end{proof}

Since Theorem~\ref{x-thm8.1} is not a standard formulation of the 
Ohsawa--Takegoshi $L^2$ extension theorem, 
we include below a more familiar version for the reader's convenience. 
Of course, Theorem~\ref{x-thm8.2} is a special case of 
Theorem~\ref{x-thm8.1}.

\begin{thm}[Ohsawa--Takegoshi $L^2$ extension theorem]\label{x-thm8.2}
Let $V\subset\mathbb C^n$ be a bounded Stein open set, and let
$\mathcal E$ be a holomorphic vector bundle over $V$
equipped with a smooth Hermitian metric $h$ that is Nakano semipositive.
Let $\varphi$ be a smooth plurisubharmonic function on $V$.
Let $s$ be a nonzero linear function on $\mathbb C^n$, and set
\[
H := V \cap \{s = 0\}.
\]
Let $f$ be a holomorphic section of $\mathcal E|_H$ such that
\[
\int_H |f|_h^2 e^{-\varphi}\, d\lambda_{n-1} < \infty,
\]
where $d\lambda_{n-1}$ denotes the Lebesgue measure on
$\mathbb C^{n-1} = \{s = 0\}$.
Then there exists a holomorphic section $F$ of $\mathcal E$ on $V$
satisfying $F|_H = f$ and
\[
\int_V |F|_h^2 e^{-\varphi}\, d\lambda_n
\le C'' \int_H |f|_h^2 e^{-\varphi}\, d\lambda_{n-1},
\]
where $d\lambda_n$ denotes the Lebesgue measure on $\mathbb C^n$ 
and $C''>0$ is a constant independent of $f$ 
\end{thm}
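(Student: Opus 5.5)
The plan is to deduce Theorem~\ref{x-thm8.2} from Theorem~\ref{x-thm8.1} with $m=1$ and $s_1=s$. The argument is a translation between $\mathcal E$-valued top-degree forms and $\mathcal E$-valued sections, followed by disposing of the weight $(1+|s|^2)^{-1-\varepsilon}$ using boundedness of $V$. First, I normalize coordinates. Since $s$ is a nonzero linear function, after a unitary change of coordinates of $\mathbb C^n$ we may assume $s=c\,w_n$ for some constant $c\neq 0$. Such a change preserves $d\lambda_n$, $d\lambda_{n-1}$, boundedness and the Stein property of $V$, and the Nakano semipositivity of $h$. Then $H=V\cap\{w_n=0\}$ is a closed complex submanifold of codimension $1$, so the hypothesis on $W$ in Theorem~\ref{x-thm8.1} holds.

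Next, I set up the forms. Put $g:=f\,dw_1\wedge\cdots\wedge dw_{n-1}$, a holomorphic $\mathcal E|_H$-valued $(n-1)$-form on $H$. A direct computation with $c_k=(\sqrt{-1})^{k^2}$ gives
\[
c_{n-1}\{g,g\}_h=2^{n-1}|f|_h^2\,d\lambda_{n-1}
\]
(up to the fixed normalization constant of Lebesgue measure). Hence the finiteness assumption of Theorem~\ref{x-thm8.2} is exactly the finiteness assumption of Theorem~\ref{x-thm8.1}. I apply Theorem~\ref{x-thm8.1} with $\varepsilon=1$ and obtain a holomorphic $\mathcal E$-valued $n$-form $G$ on $V$ with $G|_H=g\wedge ds=c\,f\,dw_1\wedge\cdots\wedge dw_n$ on $H$. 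I write $G=c\,F\,dw_1\wedge\cdots\wedge dw_n$ with $F$ a holomorphic section of $\mathcal E$ on $V$. Then $F|_H=f$, and
\[
c_n\{G,G\}_h=2^n|c|^2|F|_h^2\,d\lambda_n .
\]

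Finally, I remove the weight. Since $V$ is bounded and $s$ is linear, $\sup_V|s|^2=:S<\infty$, so $(1+|s|^2)^{-2}\ge (1+S)^{-2}$ on $V$. Therefore
\[
\int_V|F|_h^2e^{-\varphi}d\lambda_n\le \frac{(1+S)^2}{2|c|^2}\,C\int_H|f|_h^2e^{-\varphi}d\lambda_{n-1},
\]
which is the claim with $C''=C(1+S)^2/(2|c|^2)$. This constant is independent of $f$.

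I see no real obstacle here. The only point needing care is the bookkeeping of the constants $c_k$ and the factors of $2$ and $|c|^2$ in identifying $\{G,G\}_h$ with $|F|_h^2\,d\lambda_n$. These affect only the value of $C''$, not its independence from $f$.
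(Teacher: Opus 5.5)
Your reduction is correct. With $m=1$, $s_1=s$ and $\varepsilon=1$ in Theorem~\ref{x-thm8.1}, the identities $c_{n-1}\{g,g\}_h=2^{n-1}|f|_h^2\,d\lambda_{n-1}$ and $c_n\{G,G\}_h=2^n|c|^2|F|_h^2\,d\lambda_n$ hold, because the factor $(\sqrt{-1})^{k^2}$ exactly absorbs the reordering into $\prod_j\sqrt{-1}\,dw_j\wedge d\overline w_j$. Boundedness of $V$ is what lets you discard the weight $(1+|s|^2)^{-2}$.

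It is, however, not the argument the paper writes under Theorem~\ref{x-thm8.2}. There the authors bypass Theorem~\ref{x-thm8.1} and appeal directly to the classical Ohsawa--Takegoshi proof for functions. They assert that it carries over to Nakano semipositive bundles once the vanishing/a priori estimate is replaced by Ohsawa's variant of Kodaira--Nakano vanishing, and they omit the details.

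Your route is the one implicit in the paper's remark that Theorem~\ref{x-thm8.2} is a special case of Theorem~\ref{x-thm8.1}. It is also the codimension-one analogue of the paper's proof of Corollary~\ref{x-cor8.3}. It gives a complete short derivation from the stated black box, with an explicit constant $C''=C(1+S)^2/(2|c|^2)$. What it gives up is independence from the more general and less standard Theorem~\ref{x-thm8.1}.

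The paper's choice keeps the familiar hyperplane version resting only on the classical theorem, and that is what gives Remark~\ref{x-rem8.4} its point. Corollary~\ref{x-cor8.3}, and hence Proposition~\ref{x-prop8.5} used in Sections~\ref{x-sec11}--\ref{x-sec13}, can then be obtained by iterating Theorem~\ref{x-thm8.2} along a flag without invoking Theorem~\ref{x-thm8.1}. Deriving Theorem~\ref{x-thm8.2} from Theorem~\ref{x-thm8.1}, as you do, removes that second, independent foundation.
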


\begin{proof}[Proof of Theorem~\ref{x-thm8.2}]
The original Ohsawa--Takegoshi $L^2$ extension theorem is formulated 
for holomorphic functions. 
However, the same argument applies to holomorphic sections of 
Nakano semipositive vector bundles. 
Indeed, Theorem~\ref{x-thm8.2} follows from the standard proof 
given in \cite{ohsawa-takegoshi} and \cite[2 Proof of 
Theorem 0.2]{ohsawaIII}, 
combined with a variant of Kodaira--Nakano's vanishing 
theorem (see \cite[Theorem 1.7]{ohsawaIII} and 
\cite[Theorem 5]{ohsawa-v}). 
We omit the details.
\end{proof}

\begin{cor}\label{x-cor8.3}
Let $V$ be a bounded Stein open subset of $\mathbb C^n$ and 
let $(\mathcal E, h)$ be a Nakano semipositive vector bundle 
over $V$. 
Let $\varphi$ be any smooth plurisubharmonic function on $V$. 
Let $(z, w_2, \ldots, w_n)$ be a coordinate system of $\mathbb C^n$. 
We put 
\[
W:=\{x\in V\mid w_2(x)=\cdots =w_n(x)=0\}. 
\] 
Let $f(z)$ be a holomorphic section of $\mathcal E|_W$ on $W$ such that 
\[
\int _W |f|^2_h e^{-\varphi} \frac{\sqrt{-1}}{2} dz\wedge d\overline{z}<\infty. 
\] 
Then there exists a holomorphic section $F$ of $\mathcal E$ on $V$ such that 
\[
\int _V |F|^2_h e^{-\varphi} d\lambda_n \leq C' 
\int _W |f|^2_h e^{-\varphi} \frac{\sqrt{-1}}{2} dz\wedge d\overline{z}<\infty, 
\] 
where $d\lambda_n$ denotes the Lebesgue measure of $\mathbb C^n$ and 
$C'$ is a positive number which does not depend on $f$. 
\end{cor}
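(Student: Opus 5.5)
This is a specialization of Theorem~\ref{x-thm8.1}, so I would deduce it from there by choosing $m=n-1$ and $s_i:=w_{i+1}$ for $i=1,\ldots,n-1$. These are linear functions, and their common zero set in $V$ is $W$. Since $W$ is the intersection of $V$ with a complex line, it is a closed complex submanifold of codimension $n-1$. Here $n-m=1$, and I regard the section $f$ as the holomorphic $\mathcal E$-valued $1$-form $g:=f\,dz$ on $W$.

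The first step is to compare the integrands. Since $c_1=\sqrt{-1}$, we have $c_1\{g,g\}_h=|f|_h^2\sqrt{-1}\,dz\wedge d\overline z$. This is $2|f|_h^2\cdot\frac{\sqrt{-1}}{2}dz\wedge d\overline z$, so the hypothesis gives finiteness of $\int_W e^{-\varphi}c_1\{g,g\}_h$, and this integral equals twice the given one.

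Now fix $\varepsilon=1$. Theorem~\ref{x-thm8.1} then produces a holomorphic $n$-form $G$ on $V$ with $G=g\wedge dw_2\wedge\cdots\wedge dw_n$ on $W$ and a bound on its weighted $L^2$ norm. I write $G=F\,dz\wedge dw_2\wedge\cdots\wedge dw_n$ with $F$ a holomorphic section of $\mathcal E$ on $V$. Then $F|_W=f$, although the statement only asks for the estimate.

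The second step is to identify $c_n\{G,G\}_h$ with $|F|_h^2$ times a fixed positive constant multiple of $d\lambda_n$. This requires $(z,w_2,\ldots,w_n)$ to be a complex-linear coordinate system, which makes the Jacobian constant; I read the statement as assuming this.

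The third step is to remove the weight $\left(1+\sum_{i\ge2}|w_i|^2\right)^{-n}$. Because $V$ is bounded, $\sum|w_i|^2$ is bounded on $V$, so this factor is bounded below by a positive constant. Combining the three steps gives
\[
\int_V|F|_h^2e^{-\varphi}d\lambda_n\le C'\int_W|f|_h^2e^{-\varphi}\tfrac{\sqrt{-1}}{2}dz\wedge d\overline z ,
\]
where $C'$ depends only on $C$, on $\sup_V\sum|w_i|^2$, and on the constants coming from the Jacobian and from the factor $2$ above. In particular, $C'$ is independent of $f$.

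The only real point to check is the sign and normalization convention $c_k=(\sqrt{-1})^{k^2}$. I need $c_n\{G,G\}_h$ to be a positive multiple of $|F|_h^2\,d\lambda_n$, so that the weighted inequality of Theorem~\ref{x-thm8.1} translates into an inequality for the Lebesgue integral. This is a standard computation. I do not expect any substantive obstacle.
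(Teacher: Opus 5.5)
Your proposal is correct and follows the paper's proof. You set $g=f\,dz$, apply Theorem~\ref{x-thm8.1} with $s_i=w_{i+1}$, write $G=F\,dz\wedge dw_2\wedge\cdots\wedge dw_n$, and use that $c_n\{G,G\}_h$ is a constant multiple of $|F|_h^2\,d\lambda_n$. You also make explicit two points the paper leaves implicit: the factor $(1+\sum_{i\ge 2}|w_i|^2)^{-n}$ is bounded below on the bounded set $V$, so it can be absorbed into $C'$; and the coordinates must be linear for the Jacobian to be constant.
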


\begin{proof}[Proof of Corollary~\ref{x-cor8.3}]
The corollary is an immediate consequence of 
Theorem~\ref{x-thm8.1}. 
For the reader's convenience, we briefly indicate the argument.

Set $g := f\,dz$. 
Then $g$ is a holomorphic $\mathcal E|_W$-valued $1$-form on $W$ satisfying
\[
\int_W e^{-\varphi} c_1 \{g, g\}_h < \infty.
\]
Applying Theorem~\ref{x-thm8.1}, we obtain a holomorphic 
$\mathcal E$-valued $n$-form $G$ on $V$ of the form
\[
G = F\, dz \wedge dw_2 \wedge \cdots \wedge dw_n,
\]
such that $F|_W = f$ and
\[
\int_V e^{-\varphi} c_n \{G, G\}_h
\le C^\sharp \int_W e^{-\varphi} c_1 \{g, g\}_h < \infty,
\]
where $C^\sharp>0$ is independent of $f$.
Since $c_n \{G, G\}_h$ is a constant multiple of 
$|F|_h^2 d\lambda_n$, the desired estimate follows.
\end{proof}

\begin{rem}\label{x-rem8.4}
Corollary~\ref{x-cor8.3} can alternatively be obtained by applying
Theorem~\ref{x-thm8.2} inductively along the flag
\[
W = W_2 \subset W_3 \subset \cdots \subset W_n \subset V,
\]
where
\[
W_i := \{ x \in V \mid w_j(x) = 0 \text{ for } i \le j \le n \}.
\]
\end{rem}

In the subsequent sections, we will frequently use the following
form of the Ohsawa--Takegoshi $L^2$ extension theorem, which is a direct
consequence of Corollary~\ref{x-cor8.3}.

\begin{prop}\label{x-prop8.5}
Let $0 < R < 1$. Define
\[
X^*(R) := \left\{ (z_1, \ldots, z_n) \in \mathbb{C}^n \,\middle|\, 
\begin{array}{l}
0 < |z_i| < R \quad \text{for } 1 \leq i \leq l, \\
|z_i| < R \quad \text{for } l+1 \leq i \leq n 
\end{array}
\right\}.
\]
Then $X^*(R)$ is a bounded Stein open subset of $\mathbb{C}^n$. 
Let $(\mathcal{E}, h)$ be a Nakano semipositive vector bundle over $X^*(R)$. 
We define the new coordinates as follows:
\[
z := z_1, \qquad 
w_i := \begin{cases}
z_i - z_1 & \text{for } 2 \leq i \leq l, \\
z_i       & \text{for } l+1 \leq i \leq n.
\end{cases}
\] 
Define the submanifold
\[
Y^*(R) := \left\{ (z_1, \ldots, z_n) 
\in X^*(R) \,\middle|\, w_2 = \cdots = w_n = 0 \right\}.
\] 
Set the weight functions
\[
\psi := \frac{1}{l} \sum_{i=1}^l \log |z_i|^2, \qquad
\phi := -\left(1 - \frac{1}{l}\right) \sum_{i=1}^l \log |z_i|^2, 
\quad \text{and}\quad 
\phi_{\bm a} := -\sum_{i=1}^l a_i\log |z_i|^2.
\]

Let $f$ be a holomorphic section of $\mathcal{E}|_{Y^*(R)}$ satisfying
\[
\int_{Y^*(R)} |f|^2_h e^{-\psi-\phi_{\bm a}}
 \cdot \frac{\sqrt{-1}}{2} \, dz \wedge d\overline{z} < \infty.
\]
Then there exists a holomorphic section $F$ of $\mathcal{E}$ on $X^*(R)$ such that
\begin{equation}\label{x-eq8.1}
F|_{Y^*(R)} = f, \qquad 
\int_{X^*(R)} |F|^2_h e^{-\psi-\phi_{\bm a}} \, d\lambda_n < \infty.
\end{equation}
Therefore, we also have
\begin{equation}\label{x-eq8.2}
\int_{X^*(R)} |F|^2_h e^{-\phi-\phi_{\bm a}} \frac{\omega^n_P}{n!} < \infty.
\end{equation}
We note that $\phi\equiv 0$ when $l=1$. 
\end{prop}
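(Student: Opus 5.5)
The plan is to apply Corollary~\ref{x-cor8.3} directly on $V=X^*(R)$ with a suitable pluriharmonic weight, and then obtain \eqref{x-eq8.2} from \eqref{x-eq8.1} by a pointwise comparison of the volume forms.

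\emph{Step 1: the hypotheses of Corollary~\ref{x-cor8.3}.}
\begin{itemize}
\item $X^*(R)$ is a product of $l$ punctured disks $\{0<|z_i|<R\}$ and $n-l$ disks $\{|z_i|<R\}$. Each factor is an open subset of $\mathbb C$, hence Stein, and a finite product of Stein manifolds is Stein. Boundedness is obvious.
\item The substitution $(z_1,\dots,z_n)\mapsto(z,w_2,\dots,w_n)$ is a complex linear change of coordinates with determinant $1$. So $(z,w_2,\ldots,w_n)$ is a coordinate system of $\mathbb C^n$, and $d\lambda_n$ is the same in both coordinate systems.
\item We have
\[
Y^*(R)=\{z_1=\cdots=z_l=z,\ z_{l+1}=\cdots=z_n=0,\ 0<|z|<R\},
\]
which is exactly the set $W=\{w_2=\cdots=w_n=0\}\cap X^*(R)$ of Corollary~\ref{x-cor8.3}.
\item Take the weight $\varphi:=\psi+\phi_{\bm a}=\sum_{i=1}^l\bigl(\tfrac1l-a_i\bigr)\log|z_i|^2$. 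Since every $z_i$ with $i\le l$ is nowhere zero on $X^*(R)$, $\varphi$ is smooth and pluriharmonic there, so in particular plurisubharmonic.
\end{itemize}
The integrability assumption on $f$ is precisely the hypothesis of Corollary~\ref{x-cor8.3} with this $\varphi$. The corollary therefore gives a holomorphic $F$ with $F|_{Y^*(R)}=f$ and
\[
\int_{X^*(R)}|F|_h^2e^{-\psi-\phi_{\bm a}}\,d\lambda_n\le C'\int_{Y^*(R)}|f|_h^2e^{-\psi-\phi_{\bm a}}\tfrac{\sqrt{-1}}2dz\wedge d\overline z<\infty ,
\]
which is \eqref{x-eq8.1}.

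\emph{Step 2: from \eqref{x-eq8.1} to \eqref{x-eq8.2}.} Up to a fixed positive constant (a power of $2$), we have
\[
\frac{\omega_P^n}{n!}=\prod_{j=1}^l\frac{1}{|z_j|^2(-\log|z_j|^2)^2}\prod_{k=l+1}^n\frac1{(1-|z_k|^2)^2}\,d\lambda_n .
\]
Next we compute the weight factor:
\[
e^{-\phi}\prod_{j=1}^l|z_j|^{-2}=\prod_{j=1}^l|z_j|^{2(1-1/l)-2}=\prod_{j=1}^l|z_j|^{-2/l}=e^{-\psi}.
\]
Combining the two gives
\[
e^{-\phi-\phi_{\bm a}}\frac{\omega_P^n}{n!}=c\,e^{-\psi-\phi_{\bm a}}\prod_{j=1}^l(-\log|z_j|^2)^{-2}\prod_{k=l+1}^n(1-|z_k|^2)^{-2}\,d\lambda_n .
\]
Because $R<1$, on $X^*(R)$ we have $-\log|z_j|^2\ge-\log R^2>0$ and $1-|z_k|^2\ge1-R^2>0$. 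Hence the extra product is bounded by a constant depending only on $R$, $l$ and $n$, and \eqref{x-eq8.2} follows from \eqref{x-eq8.1}. When $l=1$ we have $\phi\equiv0$ by definition, and the same computation applies.

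There is no real obstacle. The only points needing care are:
\begin{itemize}
\item that the weight must be smooth and psh on $X^*(R)$ itself, not on $\Delta^n$; this holds because the $\log|z_i|^2$ are pluriharmonic away from $D$;
\item the exponent bookkeeping in Step 2. Here the strict inequality $R<1$ is essential, since it is what keeps the factors $(-\log|z_j|^2)^{-2}$ and $(1-|z_k|^2)^{-2}$ bounded.
\end{itemize}
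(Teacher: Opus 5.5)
Your proposal is correct and is essentially the paper's proof. Both apply Corollary~\ref{x-cor8.3} on $X^*(R)$ with the smooth (indeed pluriharmonic) weight $\psi+\phi_{\bm a}$ to get \eqref{x-eq8.1}, then deduce \eqref{x-eq8.2} from the pointwise bound $e^{-\phi-\phi_{\bm a}}\,\omega_P^n/n!\le C^\dagger e^{-\psi-\phi_{\bm a}}\,d\lambda_n$, which uses $R<1$; your checks of the Stein property, the determinant-one coordinate change, and the identity $e^{-\phi}\prod_j|z_j|^{-2}=e^{-\psi}$ are the details the paper leaves implicit.
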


\begin{proof}[Proof of Proposition~\ref{x-prop8.5}]
Note that $\psi + \phi_{\bm a}$ is a smooth 
plurisubharmonic function on $X^*(R)$
for any $\bm a \in \mathbb R^l$.
Hence, by Corollary~\ref{x-cor8.3}, there exists a holomorphic section
$F$ of $\mathcal E$ on $X^*(R)$ satisfying \eqref{x-eq8.1}.

Moreover, since $0 < R < 1$, there exists a positive constant $C^\dagger$
such that
\[
e^{-\phi-\phi_{\bm a}} \frac{\omega^n_P}{n!}
\le C^\dagger e^{-\psi-\phi_{\bm a}} \, d\lambda_n
\quad \text{on } X^*(R).
\]
Therefore, the integrability condition \eqref{x-eq8.2} follows immediately.
\end{proof}

\section{Acceptable bundles on $\Delta^*$}\label{x-sec9} 

In this section, we briefly recall acceptable bundles on $\Delta^*$ following 
\cite{fujino-fujisawa-ono}. 
We strongly recommend the interested reader to see \cite{fujino-fujisawa-ono}. 

\begin{thm}[{see \cite[Theorem 1.9]{fujino-fujisawa-ono}}]\label{x-thm9.1}
Let $(E, h)$ be an acceptable vector 
bundle on $\Delta^*$ with $\rank E 
= r$. 
Then ${}_a E$ is a holomorphic vector bundle for every $a\in 
\mathbb R$. 
Let $\{v_1, \ldots, v_r\}$ be a local frame of ${}_a E$ near the origin. Define
\[
\gamma({}_a E) := -\frac{1}{2} \liminf_{z \to 0} \frac{\log \det H(h, \bm v)}{\log |z|},
\]
where $H(h, \bm v)$ is the $r \times r$ matrix 
$\left( h(v_i, v_j) \right)$. Then 
$\gamma({}_a E)$ is a well-defined real-valued invariant of ${}_a E$.

Furthermore, if we let
\[
\Par_a(E, h) =: \{b_1, \ldots, b_r\},
\]
then we have
\[
\gamma({}_a E) = -\frac{1}{2} 
\lim_{z \to 0} \frac{\log \det H(h, \bm v)}{\log |z|} = \sum_{i=1}^r b_i.
\] 

Note that if we define 
\[
\{\lambda_1, \ldots, \lambda_k\} 
:= \{ \lambda \in (a-1, a] \mid {}_\lambda E / {}_{<\lambda} E \ne 0 \}
\]
with $\lambda_i \ne \lambda_j$ for $i \ne j$, then
\[
\sum_{i=1}^r b_i = \sum_{i=1}^k \lambda_i 
\dim_{\mathbb{C}} \left( {}_{\lambda_i} E / {}_{<\lambda_i} E \right).
\]
\end{thm}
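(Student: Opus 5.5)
The plan is to reduce everything to two facts on the disk: $\mathcal O_{\Delta,0}$ is a discrete valuation ring, and the determinant line bundle is acceptable (Lemma~\ref{x-lem6.2}). We proceed in three steps, following the strategy of \cite{fujino-fujisawa-ono}.

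\emph{Local freeness.} First, ${}_aE$ is torsion free, since it is a subsheaf of $j_*E$. Over $\Delta^*$ it coincides with $E$, so only the stalk at $0$ matters. It therefore suffices to show that this stalk is a finitely generated $\mathcal O_{\Delta,0}$-module of rank $r$; torsion-freeness over the DVR $\mathcal O_{\Delta,0}$ then makes it free.
\begin{itemize}
\item \emph{Upper bound on poles.} Choose $N_0$ with $h(0,-N_0)$ Griffiths seminegative (Corollary~\ref{x-cor6.7}). Lemma~\ref{x-lem7.3} then shows that every $s\in{}_aE$ satisfies $|s|_h^2\le C|z|^{-2a}(-\log|z|)^{M}$. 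Comparing with a fixed holomorphic frame of $E$ (which exists because $\Delta^*$ is Stein) and using the acceptable line bundle $\det E$ to control $\det h$, we get that the coefficients of $s$ have poles of uniformly bounded order. Hence ${}_aE\subset z^{-k}\mathcal O^{\oplus r}$ for some $k$, and ${}_aE$ is finitely generated.
\item \emph{Full rank.} To see the rank is $r$, we produce $r$ generically independent sections. We twist to $h(b,N)$ with $N\gg0$, which is Nakano positive by Corollary~\ref{x-cor6.7}. We then solve $\bar\partial$ with $L^2$ estimates (equivalently, use Theorem~\ref{x-thm8.2} in dimension one) to extend cut-off local holomorphic sections. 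The resulting sections have finite $h(b,-M_0)$-norm, and Lemma~\ref{x-lem7.2} places them in ${}_aE$ for a suitable $b\le a$.
\end{itemize}

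\emph{Well-definedness of $\gamma$.} If $\bm v$ and $\bm w$ are two frames of ${}_aE$ near $0$, then $\bm w=\bm v g$ with $g$ holomorphic and invertible at $0$. Hence $\det H(h,\bm w)=|\det g|^2\det H(h,\bm v)$. The factor $\log|\det g|^2$ is bounded near $0$, so it disappears after dividing by $\log|z|$, and $\gamma({}_aE)$ depends only on ${}_aE$.

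\emph{Computation of the limit.} Choose the frame $\bm v$ compatible with the parabolic filtration of ${}_aE|_0$, with weights $b_i=\deg^F(v_i)$, so that $\Par_a(E,h)=\{b_1,\ldots,b_r\}$. The key input is the curve case of the weak norm estimate: for $v_i'=|z|^{b_i}v_i$,
\[
C^{-1}(-\log|z|)^{-N}I_r\le H(h,\bm v')\le C(-\log|z|)^{N}I_r .
\]
Given this, $\det H(h,\bm v)=|z|^{-2\sum b_i}\det H(h,\bm v')$, where $\det H(h,\bm v')$ is squeezed between powers of $-\log|z|$. Dividing $\log\det H(h,\bm v)$ by $\log|z|$ and letting $z\to0$, the $\liminf$ becomes a genuine limit equal to $-2\sum b_i$, so $\gamma({}_aE)=\sum b_i$. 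The final identity is then just bookkeeping: each jump $\lambda_i$ of the filtration occurs among the $b_j$ exactly $\dim{}_{\lambda_i}E/{}_{<\lambda_i}E$ times.

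\emph{Expected main obstacle.} The hard step is the weak norm estimate, and specifically its lower bound.
\begin{itemize}
\item The upper bound follows directly from Lemma~\ref{x-lem7.3}. Indeed, $v_i\in{}_{b_i}E$, so $|v_i'|_h^2\le C(-\log|z|)^{M}$, and Cauchy--Schwarz bounds the off-diagonal entries of $H(h,\bm v')$.
\item For the lower bound, I would argue by duality. Let $\bm v^\vee$ be the dual frame. It lies in the prolongation of $(E^\vee,h^\vee)$, which is acceptable by Lemma~\ref{x-lem6.2}, with weights $-b_i$ up to the shift $\bm1-\bm\varepsilon$. The upper bound applied to $(v^\vee)'$ gives $H(h,\bm v')^{-1}\le C(-\log|z|)^{N}I_r$, which is the lower bound.
\item What must be justified is that the dual frame really has these weights. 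This requires compatibility of the frame with the filtration, proved via a Gram--Schmidt-type argument on the graded pieces.
\end{itemize}
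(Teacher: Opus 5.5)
\textbf{There is a genuine gap at the crux.} Your derivation of $\gamma({}_aE)=\sum b_i$ from a two-sided weak norm estimate is fine. But the lower bound in that estimate is the entire content of the statement, and the proposal does not prove it. By Cramer's rule,
\[
|v_i^\vee|_{h^\vee}^2=\frac{|v_1\wedge\cdots\wedge\widehat{v_i}\wedge\cdots\wedge v_r|_h^2}{\det H(h,\bm v)},
\]
and Hadamard's inequality plus your upper bounds make the numerator $O(|z|^{-2\sum_{j\ne i}b_j-\varepsilon})$. Hence ``$\bm v^\vee$ has weights $-b_i$'' is equivalent to $\det H(h,\bm v)\gtrsim|z|^{-2\sum b_i+\varepsilon}$, i.e.\ to $\gamma({}_aE)\ge\sum b_i$. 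The other inequality $\gamma({}_aE)\le\sum b_i$ is just Hadamard. So what you defer to ``a Gram--Schmidt-type argument on the graded pieces'' is exactly the nontrivial half of the theorem.

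As described, that argument cannot work. Compatibility of $\bm v$ with the filtration is an algebraic property, and Gram--Schmidt writes $\det H(h,\bm v)$ as a product of distances $\mathrm{dist}_h(v_i,\langle v_1,\ldots,v_{i-1}\rangle)$, for which you only have upper bounds. Here is an explicit failure. On $E=\mathcal O e_1\oplus\mathcal O e_2$ over $\Delta^*$, put $h(e_1,e_1)=h(e_2,e_2)=1$ and $h(e_1,e_2)=(1-|z|^2)z/|z|$.
\begin{itemize}
\item The eigenvalues of $H$ are $|z|^2$ and $2-|z|^2$, so sections of moderate growth have meromorphic coefficients.
\item For $g(0)\ne0$ one has $\sup_{|z|=\rho}|z^k(g_1e_1+g_2e_2)|_h\asymp\rho^k$.
\item Hence $|e_i|_h\le 1$, ${}_0E=\mathcal O e_1\oplus\mathcal O e_2$, both weights are $0$, and $\{e_1,e_2\}$ is compatible.
\item Yet $\det H=|z|^2(2-|z|^2)$, so $\gamma({}_0E)=-1$ and $|e_1^\vee|_{h^\vee}\sim|z|^{-1}$.
\end{itemize}
This $h$ is not acceptable. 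So the lower curvature bound must be used precisely at this step, and your proposal never uses it there.

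Note also that the paper gives no proof of this statement; it quotes \cite[Theorem~1.9]{fujino-fujisawa-ono}. By Remark~\ref{x-rem10.4}, the weak norm estimate is proved there only in Section~13, on the basis of Section~11. So you cannot simply import it.

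\textbf{How to close the gap.} The missing input can be supplied by an existence argument using the tools of Sections~\ref{x-sec6}--\ref{x-sec8}.
\begin{itemize}
\item Choose $\delta>0$ with ${}_{a+\delta}E={}_aE$, and $N$ with $h(a+\delta-1,N)$ Nakano semipositive (Corollary~\ref{x-cor6.7}).
\item For $z_0$ near $0$ and $\xi\in E_{z_0}$, apply Theorem~\ref{x-thm8.2} with $n=1$ and the linear function $z-z_0$. This gives a holomorphic $s_\xi$ with $s_\xi(z_0)=\xi$ and $\int|s_\xi|^2_{h(a+\delta-1,N)}\,d\lambda\le C|\xi|^2_{h(a+\delta-1,N)}$. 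Here $C$ is independent of $z_0$, since the Ohsawa--Takegoshi constant depends only on $\sup|z-z_0|$.
\item The left-hand side is a constant multiple of $\|s_\xi\|^2_{h(a+\delta,N+2)}$. So Lemma~\ref{x-lem7.2} puts $s_\xi$ in ${}_{a+\delta}E={}_aE$, with a bound on a fixed circle that is uniform in $z_0$.
\item The coefficients $\langle s_\xi,v_i^\vee\rangle$ are then holomorphic across $0$. The maximum principle gives $|v_i^\vee(z_0)|_{h^\vee}\le C'|z_0|^{a+\delta-1}(-\log|z_0|)^{N/2}$, i.e.\ $({}_aE)^\vee={}_{1-a-\varepsilon}(E^\vee)$.
\item Apply this to the frames $\{v_j\}_{b_j\le c}\cup\{zv_j\}_{b_j>c}$ of ${}_cE$, $c\in(a-1,a]$, and let $c\uparrow b_j$. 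This yields $v_j^\vee\in{}_{-b_j}(E^\vee)$.
\item Lemma~\ref{x-lem7.3}, applied to the acceptable bundle $E^\vee$, then gives your lower bound, and the rest of your argument goes through.
\end{itemize}

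\textbf{A smaller point on local freeness.} An arbitrary holomorphic frame of $E$ over $\Delta^*$ is useless: multiply one vector by $e^{1/z}$. So acceptability of $\det E$ says nothing about $\det h$ in such a frame. The comparison must be made with the $r$ moderate-growth sections from your second bullet, via Cramer's rule, using the two-sided rank-one estimate for $\det E$. That rank-one estimate itself needs proof.
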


The following easy lemma will be used in Section \ref{x-sec11}. 

\begin{lem}\label{x-lem9.2}
Let $(E, h)$ be an acceptable vector bundle on $\Delta^*$. 
\begin{itemize}
\item[(i)] Let $\mathcal D$ be a dense subset of $\mathbb R$. 
Then the family $\{\gamma({}_a E)\}_{a\in \mathcal D}$ uniquely determines 
$\{\gamma({}_a E)\}_{a\in \mathbb R}$. 

\item[(ii)] For any $\alpha \in \mathbb R$, 
$\Par_\alpha (E, h)$ is uniquely determined by 
$\{\gamma({}_a E)\}_{a\in \mathbb R}$. 
\end{itemize}
In particular, for any dense subset $\mathcal D \subset \mathbb R$, 
the family $\{\gamma({}_a E)\}_{a\in \mathcal D}$ uniquely determines 
$\Par_\alpha (E, h)$ for all $\alpha \in \mathbb R$. 
\end{lem}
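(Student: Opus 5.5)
The plan is to make explicit how $a \mapsto \gamma({}_aE)$ depends on the jumps of the filtration, and then read everything off this description. For $\lambda\in\mathbb R$ put $m(\lambda):=\dim_{\mathbb C}\bigl({}_\lambda E/{}_{<\lambda}E\bigr)$.

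Two facts from \cite{fujino-fujisawa-ono} will be used (the first is implicit in Theorem~\ref{x-thm9.1}):
\begin{itemize}
\item[(a)] The set $S:=\{\lambda \mid m(\lambda)\neq 0\}$ meets every interval $(a-1,a]$ in a finite set.
\item[(b)] Since ${}_{\lambda+1}E=z^{-1}\cdot{}_\lambda E$ (multiplication by $z^{-1}$ shifts the growth order by exactly $1$), we have $m(\lambda+1)=m(\lambda)$, so $S$ is invariant under $\lambda\mapsto\lambda+1$.
\end{itemize}
By Theorem~\ref{x-thm9.1},
\[
\gamma({}_aE)=\sum_{\lambda\in(a-1,a]}\lambda\, m(\lambda).
\]

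First, I describe the shape of this function. Fix $a$ and let $\delta>0$ be small enough that $(a,a+\delta)$ contains no point of $S$. By (b), $(a-1,a-1+\delta)$ then contains no point of $S$ either. Hence for $a'\in[a,a+\delta)$ the sets $(a'-1,a']\cap S$ and $(a-1,a]\cap S$ coincide, so $\gamma({}_{a'}E)=\gamma({}_aE)$. Thus $\gamma({}_aE)$ is a right-continuous step function of $a$. Its only possible discontinuities lie in $S$, and by (a) they are locally finite.

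At a point $\lambda\in S$, compare $a=\lambda$ with $a=\lambda-\eta$ for $0<\eta\ll1$. Passing from $a=\lambda-\eta$ to $a=\lambda$ gains the point $\lambda$ and loses the point $\lambda-1$, which carries the same multiplicity $m(\lambda)$ by (b). Therefore
\[
\gamma({}_\lambda E)-\lim_{a\to\lambda-0}\gamma({}_aE)=\lambda m(\lambda)-(\lambda-1)m(\lambda)=m(\lambda).
\]
In words, the jump of $\gamma$ at $\lambda$ is exactly $m(\lambda)$.

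For (i), right-continuity gives $\gamma({}_aE)=\lim_{a'\to a+0,\ a'\in\mathcal D}\gamma({}_{a'}E)$. This limit makes sense because $\mathcal D$ is dense, so there are points of $\mathcal D$ approaching $a$ from the right. Hence the values on $\mathcal D$ determine all values.

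For (ii), the jump formula recovers $m(\lambda)$ for every $\lambda$ from $\{\gamma({}_aE)\}_{a\in\mathbb R}$. Now $\Par_\alpha(E,h)$ is the multiset of $\lambda\in(\alpha-1,\alpha]$ counted with multiplicity $m(\lambda)$, as in the description in Theorem~\ref{x-thm9.1}. So it is determined by the function $m$. Combining (i) and (ii) gives the final assertion.

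The only delicate point is justifying (a) and (b), i.e.\ the finiteness and the periodicity of the jumps. Both follow from the local freeness of ${}_aE$ and the definitions recalled in Theorem~\ref{x-thm9.1}. The rest is elementary bookkeeping.
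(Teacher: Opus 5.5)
Your proof is correct and takes the same approach as the paper: (i) follows from right-continuity of $a\mapsto\gamma({}_aE)$ together with the density of $\mathcal D$, and (ii) follows by reading off the multiplicity $m(\lambda)$ as the jump $\gamma({}_\lambda E)-\gamma({}_{\lambda-\varepsilon}E)$ for $0<\varepsilon\ll1$. The only difference is that the paper quotes these two facts from \cite[Lemmas~7.10 and~7.12]{fujino-fujisawa-ono}, whereas you derive them directly from the formula $\gamma({}_aE)=\sum_{\lambda\in(a-1,a]}\lambda\,m(\lambda)$ of Theorem~\ref{x-thm9.1} and the shift ${}_{\lambda+1}E=z^{-1}\cdot{}_\lambda E$, which is a valid self-contained derivation.
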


\begin{proof}[Proof of Lemma \ref{x-lem9.2}]
Since $\gamma ({}_a E)$ is right-hand continuous by 
\cite[Lemma 7.10]{fujino-fujisawa-ono}, we can recover 
$\{\gamma({}_a E)\}_{a\in \mathbb R}$ by 
$\{\gamma({}_a E)\}_{a\in \mathcal D}$. 
Thus we have (i). 
We write 
\[
\Par_\alpha(E, h) := \{\underbrace{\lambda_1, \ldots, \lambda_1}_{l_1\text{ times}}, \ldots,
\underbrace{\lambda_k, \ldots, \lambda_k}_{l_k\text{ times}}\}.
\] 
We note that $\gamma ({}_\lambda E)-\gamma({}_{\lambda-\varepsilon} E)\ne 0$ 
for $0<\varepsilon \ll 1$ if and only if 
$\lambda \in \Par_\alpha (E, h)$. 
Moreover, we have 
\[
\gamma ({}_{\lambda_i} E)-\gamma ({}_{\lambda_i-\varepsilon} E)=l_i
\] 
for 
$0<\varepsilon \ll 1$ by 
\cite[Lemma 7.12]{fujino-fujisawa-ono}. 
Hence we obtain (ii). 
\end{proof}

For the details of acceptable bundles on a punctured disk, 
see \cite{fujino-fujisawa-ono}. 

\section{Pull-back and descent revisited}\label{x-sec10}

The behavior of acceptable vector bundles on a punctured disk 
under pull-back by cyclic coverings has already been discussed in 
\cite[Section~11]{fujino-fujisawa-ono}.  
In this section, we revisit this topic from a slightly different perspective.  
Because the literature employs various notational conventions, 
one of our aims here is to clarify the notation that will be used in the following sections.  
Throughout this section, we closely follow 
\cite[21.4.2.~Pull-back and descent]{mochizuki4}.

\begin{defn}\label{x-def10.1}
For any $a,b\in \mathbb R$, we define
\[
\nu(a,b):=\lfloor b-a\rfloor \in \mathbb Z,
\]
that is, $\nu(a,b)$ is the unique integer satisfying
\[
b-1 < \nu(a,b)+a \leq b.
\]
\end{defn}

We now examine the behavior of acceptable vector bundles on a punctured disk 
under pull-back via cyclic coverings.

\medskip

Let $X:=\Delta$ and $X^*:=\Delta^*$.  
Fix a positive integer $c$, and let 
\[
\psi_c \colon X \to X, \qquad \psi_c(z)=z^c,
\]
be the cyclic covering of degree $c$.  
Let $(E,h)$ be an acceptable vector bundle on the target space $X^*$.  
Then its pull-back
\[
(\widetilde E, \widetilde h):=\psi_c^*(E,h)
\]
is again an acceptable vector bundle on the source space $X^*$. 
We sometimes simply write $\psi^{-1}_cE$ to denote $(\widetilde E, \widetilde h)$. 

Let $\bm v=\{v_1,\ldots,v_r\}$ be a frame of 
${}^\diamond\!E={}_0E$ compatible with the parabolic filtration, 
so that $v_i\in {}_{a_i}E\setminus {}_{<a_i}E$ for each $i$, 
where $a_i\in (-1,0]$.  
Define
\[
\widetilde v_i := z^{-\nu(ca_i,b)}\, \psi_c^*(v_i).
\]

\begin{lem}\label{x-lem10.2}
Let $\widetilde {\bm v}=\{\widetilde v_1,\ldots,\widetilde v_r\}$.  
Then $\widetilde {\bm v}$ is a frame of ${}_b\widetilde E$ 
compatible with the parabolic filtration.  
In particular,
\[
\Par({}_b\widetilde E)
=
\{\nu(ca,b)+ca \mid a\in \Par({}^\diamond\!E)\}.
\]
\end{lem}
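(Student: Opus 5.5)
The plan is to compare the norm growth of sections before and after pull-back, and then use the characterization of a frame compatible with the parabolic filtration from \cite{fujino-fujisawa-ono}.

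\textbf{Step 1: norm growth of the new frame.} For $\zeta\in X^*$ we have $|\psi_c^*(v_i)|_{\widetilde h}(\zeta)=|v_i|_h(\zeta^c)$. Since $v_i\in {}_{a_i}E\setminus {}_{<a_i}E$, for every $\varepsilon>0$
\[
|\psi_c^*v_i|_{\widetilde h}=O\bigl(|\zeta|^{-ca_i-c\varepsilon}\bigr),
\]
and no such bound holds with exponent strictly below $ca_i$. Hence
\[
\widetilde v_i\in{}_{\nu(ca_i,b)+ca_i}\widetilde E\setminus{}_{<\nu(ca_i,b)+ca_i}\widetilde E .
\]
By Definition~\ref{x-def10.1}, the number $\nu(ca_i,b)+ca_i$ lies in $(b-1,b]$. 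So each $\widetilde v_i$ is a section of ${}_b\widetilde E$ of exact order $\nu(ca_i,b)+ca_i$.

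\textbf{Step 2: reduce to a determinant comparison.} A set of sections of ${}_b\widetilde E$ with these exact orders is a frame compatible with the parabolic filtration as soon as the orders of its members add up to $\gamma({}_b\widetilde E)$; this is the numerical criterion for compatible frames in \cite{fujino-fujisawa-ono}, cf.~Theorem~\ref{x-thm9.1}. (If that criterion is unavailable in exactly this form, I would instead prove directly that the sections are independent modulo lower orders by the same growth comparison.) The task is therefore to show
\[
\gamma({}_b\widetilde E)=\sum_i\bigl(\nu(ca_i,b)+ca_i\bigr).
\]

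\textbf{Step 3: compute the determinant.} We have
\[
\det H(\widetilde h,\widetilde{\bm v})(\zeta)=|\zeta|^{-2\sum\nu(ca_i,b)}\det H(h,\bm v)(\zeta^c).
\]
By Theorem~\ref{x-thm9.1} applied to ${}^\diamond\!E$,
\[
\log\det H(h,\bm v)(z)\sim -2\Bigl(\sum_i a_i\Bigr)\log|z| .
\]
Hence $-\frac12\lim\log\det H(\widetilde h,\widetilde{\bm v})/\log|\zeta|$ equals $\sum_i(\nu(ca_i,b)+ca_i)$. Two things still have to be checked here:
\begin{itemize}
\item[(a)] $\widetilde{\bm v}$ generates ${}_b\widetilde E$ locally, so that this determinant actually computes $\gamma({}_b\widetilde E)$.
\item[(b)] Failing (a), the inclusion of $\mathcal O\widetilde{\bm v}$ into ${}_b\widetilde E$ changes the determinant by $|g|^2$ for a holomorphic $g$, so $\gamma$ can only shift by an integer $\geq 0$. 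I would rule this out via the order sum, since any proper inclusion strictly increases the sum.
\end{itemize}

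\textbf{Main obstacle: generation.} The hardest part is showing that $\widetilde{\bm v}$ spans ${}_b\widetilde E$. I would first try descent via the $\mu_c$-action. A section $s$ of ${}_b\widetilde E$ decomposes into $\mu_c$-eigencomponents $s=\sum_k \zeta^k\psi_c^*(s_k)$, where the $s_k$ are sections of $E$ on $X^*$. Averaging over $\mu_c$ does not increase the growth, so each eigencomponent again lies in ${}_b\widetilde E$. Next, $\zeta^k\psi_c^*s_k\in{}_b\widetilde E$ forces $s_k\in{}_{(b-k)/c}E$. Expanding $s_k$ in the frame $\bm v$, with $z^m$-coefficients as allowed by the prolongation structure of ${}_{(b-k)/c}E$ (that is, by Theorem~\ref{x-thm9.1} and the compatibility of $\bm v$), then writes $s$ as an $\mathcal O$-combination of the $\widetilde v_i$. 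The exponents come out correctly because of the floor in the definition of $\nu$. The final formula for $\Par({}_b\widetilde E)$ then follows directly from Step~1.
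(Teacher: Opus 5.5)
Your argument can be completed, but the logic has to run Step~1, then generation (your descent paragraph), then compatibility. Step~2 and the fallback (b) in Step~3 are not valid as written and cannot replace the generation argument.

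\textbf{Step~2 and (b).} The ``numerical criterion'' of Step~2 is false for a family not already known to be a frame. Take $E=\mathcal O^{\oplus 2}$ on $\Delta^*$ with the flat metric and $b=0$. The sections $s_1=e_1$ and $s_2=e_1+z e_2$ both have exact order $0$, and their orders add up to $\gamma({}_0E)=0$. Yet $s_1\wedge s_2=z\,e_1\wedge e_2$, so they are not a frame. What your Step~3 computation actually shows is
\[
\gamma({}_b\widetilde E)=\sum_{i=1}^r\bigl(\nu(ca_i,b)+ca_i\bigr)+\ord_0 g,
\]
where $g$ is the determinant of $\widetilde{\bm v}$ in a frame of ${}_b\widetilde E$. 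To conclude $\ord_0 g=0$ you would need the upper bound $\gamma({}_b\widetilde E)\le\sum_i(\nu(ca_i,b)+ca_i)$, which is exactly what is unknown. The individual orders do not see $\ord_0 g$ at all, so (b) does not rule anything out. The criterion is correct only for a genuine frame. There it is the linear-algebra fact that the degrees of a basis of a filtered space sum to at least the sum of the jumps counted with multiplicity, with equality iff the basis is compatible.

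\textbf{The descent paragraph.} This does supply generation, with one correction. Write $\zeta$ upstairs and $w=\zeta^c$ downstairs. The eigencomponent $\zeta^k\psi_c^*(s_k)$ lies in ${}_b\widetilde E$ iff $s_k\in{}_{(b+k)/c}E$, not ${}_{(b-k)/c}E$.

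The expansion you need is ${}_aE=\bigoplus_i\mathcal O\,w^{-\lfloor a-a_i\rfloor}v_i$ for every $a$. This follows from the compatibility of $\bm v$ together with ${}_{a-1}E=w\cdot{}_aE$.

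For the floor check, let $m=\lfloor (b+k)/c-a_i\rfloor$. Then $cm-k$ is an integer $\le b-ca_i$, so $cm-k\le\nu(ca_i,b)$. Hence the coefficient $\zeta^{k+\nu(ca_i,b)}f_i(\zeta^c)$ of $\widetilde v_i$ is holomorphic.

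Being $r$ generators of the rank-$r$ bundle ${}_b\widetilde E$, the $\widetilde v_i$ form a frame. Step~3 plus the frame version of the criterion then gives compatibility.

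It is more economical to run the same descent with an arbitrary $\lambda$ in place of $b$. This gives ${}_\lambda\widetilde E=\bigoplus_i\mathcal O\,\zeta^{-\nu(ca_i,\lambda)}\psi_c^*v_i$. For $\lambda\in(b-1,b]$ it says that $F_\lambda$ is spanned by the $\widetilde v_i(0)$ with $\nu(ca_i,b)+ca_i\le\lambda$. That is compatibility together with the formula for $\Par({}_b\widetilde E)$, with no appeal to $\gamma$.

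\textbf{Comparison with the paper.} The paper gives no argument here; it defers to \cite[Lemma~11.2]{fujino-fujisawa-ono}. Remark~\ref{x-rem10.4} stresses that this avoids the weak norm estimate, and your corrected proof is self-contained and avoids it as well.

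Within the paper's own toolkit, generation also follows from the pairing trick of Claims~\ref{x-claim11.2} and~\ref{x-claim12.2}. Set $\widetilde v_i^\vee:=\zeta^{\nu(ca_i,b)}\psi_c^*v_i^\vee$. By the curve-case duality, $v_i^\vee$ has order $-a_i$. So for $s\in{}_b\widetilde E$, the function $s\cdot\widetilde v_i^\vee$ has growth order at most $b-\nu(ca_i,b)-ca_i<1$, hence is holomorphic, and $s=\sum_i(s\cdot\widetilde v_i^\vee)\widetilde v_i$.

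That route uses the duality theorem for $E$. Yours uses only the filtered structure of ${}^\diamond\!E$ encoded in the compatible frame $\bm v$, together with the $\mu_c$-action.
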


\begin{proof}[Proof of Lemma \ref{x-lem10.2}]
This is proved in \cite[Lemma~11.2]{fujino-fujisawa-ono}.  
We refer the reader to the proof there for details, 
although the notation used here is slightly different.
\end{proof}

Let $\mu_c := \mathbb{Z}/c\mathbb{Z}$ denote the Galois group of 
$\psi_c \colon X \to X$, and let $g$ be a generator of $\mu_c$.  
The group $\mu_c$ acts on $X$ by multiplication, and this action lifts to 
${}_b \widetilde{E}$. For each $i$, we have
\[
g^*(\widetilde v_i)=\zeta^{-\nu(ca_i,b)} \widetilde v_i,
\]
where $\zeta$ is a primitive $c$-th root of unity.

From now on, assume that $0\le b<1/2$.  
If $c$ is sufficiently large, then:

\begin{itemize}
\item $0\le \nu(ca,b)\le c-1$ for every $a\in \Par({}^\diamond\!E)$, and  
\item the map $\Par({}^\diamond\!E)\to \mathbb Z$, $a\mapsto \nu(ca,b)$, is injective.
\end{itemize}

Let $0$ denote the origin of $X=\Delta$.  
We obtain the following vector space decomposition:
\begin{equation}\label{x-eq10.1}
{}_b \widetilde E|_0 = \bigoplus_{0\le p\le c-1} V_p,
\end{equation}
where
\[
V_p = \langle \widetilde v_j|_0 \mid \nu(ca_j,b)=p \rangle.
\]
Then $g$ acts on $V_p$ by multiplication by $\zeta^{-p}$.

By definition, for each $p$ with $V_p\neq 0$, 
there exists a unique 
\[
\chi(p) \in \Par({}^\diamond\!E)
\quad\text{such that}\quad
p=\nu(c\chi(p),b).
\]
Thus we obtain an injection
\begin{equation}\label{x-eq10.2}
\chi\colon \{0\le p\le c-1 \mid V_p\ne 0\}
\longrightarrow \Par({}^\diamond\!E).
\end{equation}
Set
\[
\varphi(p):=\nu(c\chi(p),b)+c\chi(p)\in \Par({}_b\widetilde E),
\]
giving a map
\begin{equation}\label{x-eq10.3}
\varphi:
\{0\le p\le c-1 \mid V_p\ne 0\}
\longrightarrow
\Par({}_b\widetilde E).
\end{equation}

The decomposition \eqref{x-eq10.1} induces a splitting of the parabolic filtration $F$ of ${}_b\widetilde E$:
\[
F_d({}_b\widetilde E|_0)
=
\bigoplus_{\varphi(p)\le d} V_p.
\]

Conversely, let 
\[
\widetilde{\bm u}=\{\widetilde u_1,\ldots,\widetilde u_r\}
\]
be a $\mu_c$-equivariant frame of ${}_b\widetilde E$, 
so that
\[
g^*\widetilde u_j=\zeta^{-p_j}\widetilde u_j
\quad
(0\le p_j\le c-1).
\]
Then $\widetilde u_j|_0\in V_{p_j}$, and in particular, 
$\widetilde{\bm u}$ is compatible with the filtration $F$.  
Set
\[
u_j:=z^{p_j}\widetilde u_j.
\]
Then each $u_j$ is $\mu_c$-invariant and hence descends to a section of $E$, 
which we also denote by $u_j$.

\begin{lem}\label{x-lem10.3}
The set $\bm u=\{u_1,\ldots,u_r\}$ is a frame of ${}^\diamond\!E$ 
compatible with the parabolic filtration.
\end{lem}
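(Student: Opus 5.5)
We first check that each $u_j$ is a section of ${}^\diamond\!E={}_0E$, then that $\bm u$ is a frame, and finally that it is compatible with the parabolic filtration. For the first step, $\widetilde u_j\in{}_b\widetilde E$ gives $|\widetilde u_j|_{\widetilde h}=O(|z|^{-b-\varepsilon})$, so $|u_j|_{\widetilde h}=O(|z|^{p_j-b-\varepsilon})$ upstairs. Writing $w=z^c$ for the coordinate downstairs and using $|\psi_c^*s|_{\widetilde h}=|s|_h\circ\psi_c$, the descended $u_j$ satisfies $|u_j|_h=O(|w|^{(p_j-b-\varepsilon)/c})$. Since $p_j\ge 0$ and $0\le b<1/2$, this is $O(|w|^{-\varepsilon'})$ for every $\varepsilon'>0$, so $u_j\in{}_0E$.

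For the second and third steps we locate each $u_j$ in the filtration. Since $\widetilde{\bm v}$ is a frame of ${}_b\widetilde E$ and $g^*\widetilde v_i=\zeta^{-\nu(ca_i,b)}\widetilde v_i$, we expand $\widetilde u_j=\sum_i f_{ij}\widetilde v_i$ with $f_{ij}$ holomorphic. Equivariance of both sides forces $f_{ij}(\zeta z)\zeta^{-\nu(ca_i,b)}=\zeta^{-p_j}f_{ij}(z)$. Hence $f_{ij}$ contains only monomials $z^m$ with $m\equiv\nu(ca_i,b)-p_j \pmod c$.

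Multiplying by $z^{p_j}$ and using $z^{\nu(ca_i,b)}\widetilde v_i=\psi_c^*v_i$, we get
\[
u_j=\sum_i z^{p_j-\nu(ca_i,b)}f_{ij}\,\psi_c^*v_i .
\]
Each coefficient $z^{p_j-\nu(ca_i,b)}f_{ij}$ is $\mu_c$-invariant, so it is a Laurent series $g_{ij}(w)$ in $w=z^c$. Because $u_j\in{}_0E$ and $\{v_i\}$ is a frame of ${}_0E$, the $g_{ij}$ are holomorphic in $w$. The transition matrix $(g_{ij})$ from $\bm v$ to $\bm u$ is therefore holomorphic. Its determinant pulls back to $z^{\sum_j p_j-\sum_i\nu(ca_i,b)}\det(f_{ij})$. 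Here $\det(f_{ij})$ is a unit because $\widetilde{\bm u}$ and $\widetilde{\bm v}$ are both frames of ${}_b\widetilde E$, and the two exponent sums agree because both equal $\sum_p p\dim V_p$, since $\widetilde u_j|_0\in V_{p_j}$. Thus $\det(g_{ij})$ is a unit and $\bm u$ is a frame of ${}_0E$.

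For compatibility, consider the terms of $g_{ij}(0)$. By injectivity of $a\mapsto\nu(ca,b)$, the indices $i$ with $\nu(ca_i,b)=p_j$ are exactly those with $a_i=\chi(p_j)$. For these $i$, the constant term is $g_{ij}(0)=f_{ij}(0)$. For all other $i$, the exponent $p_j-\nu(ca_i,b)$ is nonzero with $|p_j-\nu(ca_i,b)|<c$. Any monomial of $f_{ij}$ has degree $m\equiv\nu(ca_i,b)-p_j\pmod c$, so $m+p_j-\nu(ca_i,b)$ is a nonzero multiple of $c$. In particular it is never $0$, so these $i$ contribute no constant term and $g_{ij}(0)=0$.

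Consequently $u_j|_0$ lies in the span of $\{v_i|_0: a_i=\chi(p_j)\}$. Its image in $\Gr^F_{\chi(p_j)}$ is $\sum f_{ij}(0)v_i|_0$, which corresponds to $\widetilde u_j|_0\in V_{p_j}$ under the identification $V_p\cong\Gr^F_{\chi(p)}({}^\diamond\!E|_0)$. Since $\widetilde{\bm u}|_0$ is a basis adapted to $\bigoplus_p V_p$, grouping the $u_j$ by $\chi(p_j)$ gives vectors in $F_{\chi(p_j)}$ that induce bases of every graded piece. This is compatibility in the sense of Definition~\ref{x-def3.1}.

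The main obstacle is the bookkeeping of exponents modulo $c$: one must show that the off-diagonal terms have no constant term while the diagonal ones do. This rests on the range $0\le\nu\le c-1$ and on the injectivity assumption, which hold for $c$ large.
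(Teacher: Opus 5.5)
Your strategy (expand $\widetilde u_j=\sum_i f_{ij}\widetilde v_i$, read off the $\mu_c$-characters of the coefficients, and descend the transition matrix) is a legitimate alternative to the paper's route. The paper instead sharpens the norm bound to $\widetilde u_j\in{}_{\varphi(p_j)}\widetilde E\setminus{}_{<\varphi(p_j)}\widetilde E$, uses $\varphi(p_j)=p_j+c\chi(p_j)$ to get $u_j\in{}_{\chi(p_j)}E\setminus{}_{<\chi(p_j)}E$, and then quotes Lemma~11.3 of the curve paper. However, two steps in your write-up are false as stated.

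\emph{First step.} From $\widetilde u_j\in{}_b\widetilde E$ alone, when $p_j=0$ you only get $|u_j|_h=O(|w|^{-(b+\varepsilon)/c})$, i.e.\ $u_j\in{}_{b/c}E$. This is not ${}^\diamond\!E$-growth when $b>0$, and $b>0$ is exactly the case used in Section~13, where $b=\eta$. You can repair this inside your own framework. The function $z^{p_j-\nu(ca_i,b)}f_{ij}$ is $\mu_c$-invariant, and all its $z$-exponents are multiples of $c$ that are $\ge p_j-\nu(ca_i,b)\ge-(c-1)$, hence $\ge 0$. So $g_{ij}$ is holomorphic in $w$ without any appeal to norms, and $u_j\in{}^\diamond\!E$ follows. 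Alternatively, $\nu(ca,b)\le c-1$ says $a>b/c-1$ for every $a\in\Par({}^\diamond\!E)$, so there is no jump in $(0,b/c]$ and ${}_{b/c}E={}^\diamond\!E$.

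\emph{Compatibility step.} The claim that $g_{ij}(0)=0$ whenever $a_i\neq\chi(p_j)$ is wrong. The congruence $m\equiv\nu(ca_i,b)-p_j \pmod c$ permits $m=\nu(ca_i,b)-p_j$ as soon as $\nu(ca_i,b)>p_j$, and then $m+p_j-\nu(ca_i,b)=0$. For instance, suppose $a_2<a_1$, so that $\nu(ca_2,b)>\nu(ca_1,b)$. Then $\widetilde u_1:=\widetilde v_1+z^{\nu(ca_2,b)-\nu(ca_1,b)}\widetilde v_2$ is $\mu_c$-equivariant with $p_1=\nu(ca_1,b)$. It descends to $u_1=v_1+v_2$, whose value at $0$ is not in the span of the $v_i|_0$ with $a_i=a_1$.

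What the exponent count really gives is triangularity. One has $g_{ij}(0)=0$ when $\nu(ca_i,b)<p_j$; since $a\mapsto\nu(ca,b)$ is nonincreasing and injective on $\Par({}^\diamond\!E)$, this means exactly $a_i>\chi(p_j)$. When $\nu(ca_i,b)=p_j$ one has $g_{ij}(0)=f_{ij}(0)$. Hence $u_j|_0\in F_{\chi(p_j)}$, with class $\sum_{a_i=\chi(p_j)}f_{ij}(0)[v_i|_0]$ in $\Gr^F_{\chi(p_j)}$, and that is all compatibility requires.

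With these two corrections, your determinant count and the identification $V_p\cong\Gr^F_{\chi(p)}$ finish the proof, and the argument becomes a self-contained replacement for the appeal to the curve-case lemma. But the ``main obstacle'' you single out is misidentified. Constant terms for lower-weight indices need not vanish, and the proof must allow for them.
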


\begin{proof}[Proof of Lemma \ref{x-lem10.3}]
Let $w$ be the coordinate on the target space $X$, so $w=\psi_c(z)=z^c$.  
By definition, for every $\varepsilon>0$, 
there exists a constant $C>0$ such that
\[
|\widetilde u_j|_{\widetilde h}
\le
\frac{C}{|z|^{\varphi(p_j)+\varepsilon}}.
\]
Since
\[
\varphi(p_j)
=
\nu(c\chi(p_j),b)+c\chi(p_j)
=
p_j + c\chi(p_j),
\]
we obtain
\[
|u_j|_h
=
|z^{p_j}\widetilde u_j|_{\widetilde h}
\le
\frac{C}{|w|^{\chi(p_j)+\varepsilon/c}}.
\]
Thus $u_j\in {}_{\chi(p_j)}E\subset {}^\diamond\!E$.  
Because 
\(
\widetilde u_j \in {}_{\varphi(p_j)}\widetilde E
\setminus {}_{<\varphi(p_j)}\widetilde E,
\)
we also have 
\(
u_j\in {}_{\chi(p_j)}E\setminus {}_{<\chi(p_j)}E.
\)
As in the proof of \cite[Lemma~11.3]{fujino-fujisawa-ono}, 
this implies that $\bm u$ is a frame of ${}^\diamond\!E$ 
compatible with the parabolic filtration.
\end{proof}

We end this section with an important remark.

\begin{rem}\label{x-rem10.4}
In \cite[21.4.2]{mochizuki4}, Mochizuki uses the weak norm estimate 
(see \cite[Theorem~21.3.2]{mochizuki4}).  
In contrast, in \cite[Section~11]{fujino-fujisawa-ono}, 
we do not make use of the weak norm estimate 
(see \cite[Theorem~1.13]{fujino-fujisawa-ono}).  
This is because, in \cite{fujino-fujisawa-ono}, the weak norm estimate is proved in 
\cite[Section~13]{fujino-fujisawa-ono}, 
where the argument depends on the results of 
\cite[Section~11]{fujino-fujisawa-ono}.
\end{rem}

\section{Acceptable line bundles on $\Delta^*\times \Delta^{n-1}$}
\label{x-sec11}

In this section, we study an acceptable line 
bundle $(L,h)$ on a partially punctured polydisk 
\[
X^* := \Delta^* \times \Delta^{n-1}.
\]
Our approach is a natural extension of the method developed in
\cite{fujino-fujisawa-ono}, and appears to 
be new and different from that of
Mochizuki.

\medskip

We consider the projection
\[
\pi \colon \Delta^*\times \Delta^{n-1} \to \Delta^{n-1}.
\]

\begin{lem}\label{x-lem11.1}
Let $(L,h)$ be an acceptable line bundle on a partially punctured 
polydisk $X^*=\Delta^*\times \Delta^{n-1}$.
Set $P:=(0,\ldots,0)\in \Delta^{n-1}$.
Assume that
\begin{equation}\label{x-eq11.1}
\alpha \notin \Par_\alpha\!\left(L|_{\pi^{-1}(P)},\, h|_{\pi^{-1}(P)}\right).
\end{equation}
Then ${}_\alpha L$ is a line bundle on $X(R)=\Delta(R)^n$ for some $0<R<1$.
\end{lem}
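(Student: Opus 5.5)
The plan is to construct a single holomorphic section $F$ of ${}_\alpha L$ near the origin that generates ${}_\alpha L$ everywhere. I will build it by extending a generator from the central fibre $\pi^{-1}(P)$ with Proposition~\ref{x-prop8.5} (case $l=1$, so $\psi=\log|z_1|^2$ and $\phi\equiv 0$), and then show that it stays a generator on nearby fibres.

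\textbf{Step 1 (a generator on the central fibre).} By Theorem~\ref{x-thm9.1}, ${}_\alpha(L|_{\pi^{-1}(P)})$ is a line bundle; let $b$ be its unique parabolic weight. Assumption \eqref{x-eq11.1} says that $b\in(\alpha-1,\alpha)$, so $b<\alpha$. After shrinking, choose a generator $f$ of ${}_\alpha(L|_{\pi^{-1}(P)})$ on $\Delta(R)$. Then $f$ has no zeros on $\Delta^*(R)$, and $|f|_h=O(|z_1|^{-b-\varepsilon})$ for every $\varepsilon>0$.

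\textbf{Step 2 (the extension).} Fix $a$ with $b<a<\alpha$. By Corollary~\ref{x-cor6.7}, $(L,h(0,N))$ is Nakano semipositive for $N\ge N_0$. Apply Proposition~\ref{x-prop8.5} to $(L,h(0,N))$ with weight $e^{-\psi-\phi_a}=|z_1|^{2a-2}$. The fibre integral
\[
\int|f|^2_h(-\log|z_1|^2)^N|z_1|^{2a-2}\,\tfrac{\sqrt{-1}}{2}dz_1\wedge d\overline z_1
\]
is finite because $a>b$. We obtain $F$ on $X^*(R)$ with $F|_{\pi^{-1}(P)}=f$ and, by \eqref{x-eq8.2}, $\|F\|_{h(a,-M_0)}<\infty$ with $M_0=-N$. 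Corollary~\ref{x-cor7.5} then gives $F\in{}_aL\subset{}_\alpha L$.

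\textbf{Step 3 (generation on nearby fibres; the main obstacle).} I must show that for every $Q$ near $P$ the restriction $F|_{\pi^{-1}(Q)}$ is a nowhere vanishing generator of ${}_\alpha(L|_{\pi^{-1}(Q)})$. The approach is to attach plurisubharmonic functions to $F$ and use Lelong numbers.
\begin{itemize}
\item For real $c$, the function $u_c:=\log|F|^2_{h(c,-M)}$ is psh for $M\ge N_0$ (Lemma~\ref{x-lem6.9}).
\item Its Lelong number at $(0,Q)$ detects the vanishing order of $F|_{\pi^{-1}(Q)}$ measured against $|z_1|^{-c}$; the correction term $\chi(0,M)$ contributes nothing, by Lemma~\ref{x-lem5.6}.
\item Siu's theorem says that the upper level sets of $Q\mapsto\nu(u_c,(0,Q))$ are analytic. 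Combining this, for $c$ in a dense set, with upper semicontinuity of Lelong numbers and Lemma~\ref{x-lem5.7}, I expect the fibrewise invariants $\gamma({}_c(L|_{\pi^{-1}(Q)}))$ to be constant in $Q$.
\item Lemma~\ref{x-lem9.2} then shows that the fibrewise weight $b$, and hence the relation $F|_{\pi^{-1}(Q)}\in{}_bL\setminus{}_{<b}L$, is independent of $Q$. With rank one, this means $F|_{\pi^{-1}(Q)}$ generates ${}_\alpha$ of that fibre near $z_1=0$.
\end{itemize}
Zeros of $F$ in $X^*$ away from $z_1=0$ are excluded after shrinking, since $f$ has no zeros on the compact circle $|z_1|=R'$ and the vanishing order is fibrewise constant. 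This step is the delicate one: one must check that the Lelong-number level sets really encode the $\gamma$-invariants and that the right-continuity argument goes through.

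\textbf{Step 4 (conclusion).} For any local section $s$ of ${}_\alpha L$, the quotient $g=s/F$ is holomorphic on $X^*(R)$. On each fibre, $g$ extends holomorphically across $z_1=0$, because $F$ generates ${}_\alpha$ there and $s|_{\pi^{-1}(Q)}\in{}_\alpha$. Moreover, Proposition~\ref{x-prop7.4} gives a locally uniform bound on $g$ near $z_1=0$. Riemann extension then makes $g$ holomorphic on $X(R)$. Hence ${}_\alpha L=\mathcal O_{X(R)}\cdot F$, a line bundle.
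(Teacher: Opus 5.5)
\textbf{There is a genuine gap in Step~3.} Steps~1 and~2 are fine, and so is the Riemann-extension reduction in Step~4. But Step~3 carries all of the content, and it does not go through as proposed.

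First, Lemma~\ref{x-lem5.7} needs both $\{\nu\ge t\}$ and $\{\nu\le t\}$ to be analytic. Siu's theorem applied to the single psh function $u_c=\log|F|^2_{h(c,-M)}$ gives only the upper level sets, and upper semicontinuity says nothing about the lower ones. So $Q\mapsto\nu(u_c,(0,Q))$ could a priori jump up along a proper analytic subset.

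Second, and more fundamentally, psh functions built from $F$ alone only yield \emph{upper} bounds on $|F|_h$, via Corollary~\ref{x-cor5.5}. Note also that $\nu(u_c,(0,Q))$ is only a lower bound for the Lelong number of $u_c|_{\pi^{-1}(Q)}$. Generation on nearby fibres needs a \emph{lower} bound instead: that $F|_{\pi^{-1}(Q)}$ has no zeros and its weight does not drop to $\le\alpha-1$.

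Third, even constancy of the intrinsic invariants $\gamma({}_c(L|_{\pi^{-1}(Q)}))$ would not imply that $F|_{\pi^{-1}(Q)}$ attains the weight $b$. The ``hence'' in your fourth bullet is a non sequitur.

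The same missing lower bound is hidden in Step~4. Proposition~\ref{x-prop7.4} bounds $|s|_h$, but bounding $s/F$ needs an upper bound on $|F^{-1}|_{h^\vee}$. Your compactness argument on $|z_1|=R'$ also does not control zeros of $F$ accumulating at $(0,P)$ from inside $X^*$. In the paper, constancy of $\gamma$ is Lemma~\ref{x-lem11.3}, which is proved \emph{after} Lemma~\ref{x-lem11.1} and uses its conclusion. The reason is that the second psh function $2\varphi_\alpha-\chi(\beta',-N)$ there comes from the dual generator.

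\textbf{The missing idea is to extend the dual section as well.} Since $b>\alpha-1$, the section $g:=f^{-1}$ lies in ${}_{1-\alpha-\varepsilon}(L^\vee|_{\pi^{-1}(P)})$ for $0<\varepsilon\ll 1$, and the relevant weighted $L^2$ integral is finite. So Proposition~\ref{x-prop8.5} and Corollary~\ref{x-cor7.5} give $G\in{}_{1-\alpha-\varepsilon}(L^\vee)$ on $X^*(R)$ with $G|_{\pi^{-1}(P)}=g$.

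Then $F\cdot G\in{}_{1-\varepsilon}(\mathcal O_{X^*})=\mathcal O_X$. It equals $1$ on $\Delta\times\{P\}$, so it is a unit on some $X(R)$. After replacing $G$ by $G/(F\cdot G)$, every $\Phi\in{}_\alpha L$ satisfies $\Phi\cdot G\in\mathcal O_X$ and $\Phi=(\Phi\cdot G)F$. Hence ${}_\alpha L=\mathcal O_{X(R)}\cdot F$.

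This argument needs no Lelong numbers and no separate treatment of zeros of $F$. The normalized $G$ is exactly $F^{-1}$ together with the moderate-growth bound that your Steps~3 and~4 were missing.
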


A more detailed description of a local generator of ${}_\alpha L$ can be found
in the proof of Lemma~\ref{x-lem11.1}.

\begin{proof}[Proof of Lemma~\ref{x-lem11.1}]
Choose a sufficiently large positive integer $N$ such that
\[
h(\alpha,N):=h\cdot e^{-\chi(\alpha,N)}
\]
is Nakano semipositive.
Let $f$ be a generator of ${}_\alpha\!\left(L|_{\pi^{-1}(P)}\right)$.
By assumption \eqref{x-eq11.1}, we have
\[
\int_{\pi^{-1}(P)\cap X(R)} |f|^2_{h(\alpha,N)} e^{-\psi}
\frac{\sqrt{-1}}{2}dz_1\wedge d\overline z_1 < \infty
\]
for any $0<R<1$, where $\psi=\log|z_1|^2$.

By Proposition~\ref{x-prop8.5}, there exists a holomorphic section $F$ of $L$ on
$X^*(R)$ such that $F|_{\pi^{-1}(P)}=f$ and
\[
\|F|_{X^*(R)}\|^2_{h(\alpha, N)}=\int_{X^*(R)} |F|^2_{h(\alpha,N)}\frac{\omega_P^n}{n!} < \infty.
\]
Hence, by Corollary~\ref{x-cor7.5}, we have $F\in {}_\alpha L$.

Set $g:=f^{-1}$.
Applying the same argument to $L^\vee$, we obtain a holomorphic section $G$ of
$L^\vee$ on $X^*(R)$ such that $G|_{\pi^{-1}(P)}=g$ and
$G\in {}_{1-\alpha-\varepsilon}(L^\vee)$ for some $0<\varepsilon\ll1$.
Since $(F\cdot G)|_{\pi^{-1}(P)}\equiv1$, after replacing $G$ by
$\frac{G}{F\cdot G}$ we may assume that $F\cdot G\equiv1$ on $X(R)$.

\begin{claim}\label{x-claim11.2}
The section $F$ is a generator of ${}_\alpha L$, and $G$ is a dual generator of
${}_{1-\alpha-\varepsilon}(L^\vee)$ on $X(R)$.
In particular, both ${}_\alpha L$ and ${}_{1-\alpha-\varepsilon}(L^\vee)$ are line
bundles on $X(R)$, and
\[
({}_\alpha L)^\vee = {}_{1-\alpha-\varepsilon}(L^\vee).
\]
\end{claim}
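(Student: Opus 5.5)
To prove Claim~\ref{x-claim11.2}, I will show that every local section $s$ of ${}_\alpha L$ near a point of $X(R)$ can be written as $s=(s\cdot G)\,F$ with $s\cdot G$ holomorphic on $X(R)$. Since $F\cdot G\equiv 1$ on $X^*(R)$, both $F$ and $G$ are nowhere vanishing on $X^*(R)$, and on $X^*(R)$ we have the identity $s=(s\cdot G)F$. The whole matter therefore reduces to showing that the holomorphic function $s\cdot G$ on $X^*(R)$ extends holomorphically across $D_1=\{z_1=0\}$.

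For the extension I will use the estimates: $|s|_h=O(|z_1|^{-\alpha-\varepsilon'})$ for every $\varepsilon'>0$, and $|G|_{h^\vee}=O(|z_1|^{-(1-\alpha-\varepsilon)-\varepsilon'})$, which hold locally on $X(R)$. Hence
\[
|s\cdot G|\le |s|_h|G|_{h^\vee}=O\bigl(|z_1|^{-1+\varepsilon-2\varepsilon'}\bigr).
\]
Choosing $2\varepsilon'<\varepsilon$, the exponent is strictly greater than $-1$. By the Riemann extension argument in $z_1$, applied fiberwise via the Laurent expansion in $z_1$ with holomorphic coefficients in the remaining variables, all negative Laurent coefficients vanish. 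So $s\cdot G$ extends holomorphically to $X(R)$. The same argument with the roles exchanged shows that any section $t$ of ${}_{1-\alpha-\varepsilon}(L^\vee)$ satisfies $t=(t\cdot F)G$ with $t\cdot F$ holomorphic, since
\[
|t\cdot F|=O\bigl(|z_1|^{-1+\varepsilon-2\varepsilon'}\bigr)
\]
as well. Because $F\in{}_\alpha L$ and $G\in{}_{1-\alpha-\varepsilon}(L^\vee)$, we get ${}_\alpha L=\mathcal O_{X(R)}F$ and ${}_{1-\alpha-\varepsilon}(L^\vee)=\mathcal O_{X(R)}G$. These are free of rank one, since $F\neq 0$ and sections of $L$ on $X^*$ determine $F$'s coefficient uniquely. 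The pairing $F\cdot G=1$ then identifies $({}_\alpha L)^\vee={}_{1-\alpha-\varepsilon}(L^\vee)$.

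The main obstacle is justifying the fact used above that $G\in{}_{1-\alpha-\varepsilon}(L^\vee)$ for some fixed $\varepsilon>0$. I intend to get this from the construction via Proposition~\ref{x-prop8.5} and Corollary~\ref{x-cor7.5}. By assumption \eqref{x-eq11.1}, $1-\alpha$ is not a parabolic weight of the dual, because the dual weights are $1-\varepsilon-$(weights). This gives integrability of $g$ against weight $1-\alpha-\varepsilon$ on $\pi^{-1}(P)$ for small $\varepsilon$. I will also check that the normalization $G\mapsto G/(F\cdot G)$ preserves membership. For this I will shrink $R$ so that $F\cdot G$, which is holomorphic on $X(R)$ by the extension argument above and equals $1$ on $\pi^{-1}(P)$, is nowhere zero on $X(R)$. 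Dividing by it then changes nothing in the growth orders.
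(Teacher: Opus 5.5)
Your proposal is correct and follows the paper's proof. The paper's one-line argument, $\Phi\cdot G\in{}_{1-\varepsilon}(\mathcal O_{X^*})=\mathcal O_X$ and hence $\Phi=(\Phi\cdot G)F$, is exactly your decomposition $s=(s\cdot G)F$, with the identification ${}_{1-\varepsilon}(\mathcal O_{X^*})=\mathcal O_X$ being the Riemann-extension step you spell out (and symmetrically for $L^\vee$). Your ``main obstacle'' paragraph only re-derives the setup that the paper establishes before the claim, namely the construction of $G$ and its normalization $G/(F\cdot G)$; there, integrability of $g$ against weight $1-\alpha-\varepsilon$ holds for small $\varepsilon$ even without \eqref{x-eq11.1}, which is needed only for $F$.
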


\begin{proof}[Proof of Claim \ref{x-claim11.2}]
Let $\Phi\in {}_\alpha L$.
Then $\Phi\cdot G\in {}_{1-\varepsilon}\left(\mathcal O_{X^*}\right)=\mathcal O_X$, and hence
$\Phi=(\Phi\cdot G)F$.
This shows that ${}_\alpha L=\mathcal O_X\cdot F$ on $X(R)$.
The statement for $L^\vee$ follows similarly.
\end{proof}

Using the same argument, for any $Q\in \Delta(R)^{n-1}$,
the restrictions $F|_{\pi^{-1}(Q)}$ and $G|_{\pi^{-1}(Q)}$ generate
${}_\alpha(L|_{\pi^{-1}(Q)})$ and
${}_{1-\alpha-\varepsilon}(L^\vee|_{\pi^{-1}(Q)})$, respectively.
In particular,
\[
{}_\alpha(L|_{\pi^{-1}(Q)}) = ({}_\alpha L)|_{\pi^{-1}(Q)}.
\]
This completes the proof of Lemma~\ref{x-lem11.1}.
\end{proof}

Lemma~\ref{x-lem11.3} below is one of the key points of our approach.

\begin{lem}\label{x-lem11.3}
In the setting of Lemma~\ref{x-lem11.1},
$\gamma\!\left({}_\alpha(L|_{\pi^{-1}(Q)})\right)$ is independent of
$Q\in \Delta(R)^{n-1}$.
\end{lem}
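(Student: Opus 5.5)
The plan is to turn $\gamma\bigl({}_\alpha(L|_{\pi^{-1}(Q)})\bigr)$ into the Lelong number of a plurisubharmonic function along $D_1=\{z_1=0\}$, and then show that this Lelong number is constant by Siu's theorem together with Lemma~\ref{x-lem5.7}.

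\textbf{Step 1: the two plurisubharmonic functions.} We keep the sections $F$ and $G$ from the proof of Lemma~\ref{x-lem11.1}, normalized so that $F\cdot G\equiv 1$. Fix $N\ge N_0$ large enough that both $(L,h(0,-N))$ and $(L^\vee,h^\vee(0,-N))$ are Griffiths seminegative (Corollary~\ref{x-cor6.7}; $L^\vee$ is acceptable by Lemma~\ref{x-lem6.2}). Put
\[
u:=\log|F|^2_{h(\alpha,-N)},\qquad v:=\log|G|^2_{h^\vee(1-\alpha-\varepsilon,-N)} .
\]
By Lemma~\ref{x-lem6.9} these are plurisubharmonic on $X^*(R)$. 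By Proposition~\ref{x-prop7.4}, applied after shrinking $R$ and using $F\in{}_\alpha L$ and $G\in{}_{1-\alpha-\varepsilon}(L^\vee)$, both are bounded above near $D_1$. Hence both extend to plurisubharmonic functions on $X(R)$.

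Since $|F\cdot G|\equiv 1$, their sum is explicit:
\[
u+v=2(1-\varepsilon)\log|z_1|-2N\Bigl(\log(-\log|z_1|^2)+\sum_{k\ge2}\log(1-|z_k|^2)\Bigr).
\]
By additivity of Lelong numbers and Lemma~\ref{x-lem5.6}, this gives $\nu(u,Q')+\nu(v,Q')=2-2\varepsilon$ for every $Q'=(0,Q)\in D_1\cap X(R)$.

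\textbf{Step 2: constancy of $\nu(u,\cdot)$ on $D_1$.} By Siu's theorem (Definition~\ref{x-def5.2}), $\{\nu(u,\cdot)\ge c\}$ is analytic. So is
\[
\{\nu(u,\cdot)\le b\}=\{\nu(v,\cdot)\ge 2-2\varepsilon-b\}.
\]
Intersecting with the connected manifold $D_1\cap X(R)$, Lemma~\ref{x-lem5.7} shows that $\nu(u,Q')=:\nu_0$ is independent of $Q'$.

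\textbf{Step 3: comparing with the fibres.} Restrict $u$ to the fibre $\pi^{-1}(Q)$, a disk in the $z_1$-direction. Restriction can only increase Lelong numbers, since the liminf is taken over a smaller set. Therefore
\[
\nu(u|_{\pi^{-1}(Q)},0)\ge\nu_0,\qquad \nu(v|_{\pi^{-1}(Q)},0)\ge 2-2\varepsilon-\nu_0 .
\]
The restricted sum $(u+v)|_{\pi^{-1}(Q)}$ still has Lelong number $2-2\varepsilon$ at $0$, because the $\log(-\log)$ term contributes nothing. Hence both inequalities are equalities.

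On the other hand, $F|_{\pi^{-1}(Q)}$ generates ${}_\alpha(L|_{\pi^{-1}(Q)})$ (end of the proof of Lemma~\ref{x-lem11.1}). Theorem~\ref{x-thm9.1} then says that $\log|F|^2_h/\log|z_1|$ has the honest limit $-2\gamma\bigl({}_\alpha(L|_{\pi^{-1}(Q)})\bigr)$ as $z_1\to0$. The $\log(-\log|z_1|)$ factor does not affect this limit, so
\[
\nu(u|_{\pi^{-1}(Q)},0)=2\alpha-2\gamma\bigl({}_\alpha(L|_{\pi^{-1}(Q)})\bigr).
\]
Combined with Step 2, this yields $\gamma=\alpha-\nu_0/2$, independent of $Q$.

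\textbf{Main obstacle.} The delicate point is Step 3. A Lelong number can jump up under restriction to a slice, and this is exactly why the dual section $G$ is needed: the exact identity $\nu(u)+\nu(v)=2-2\varepsilon$ pins both slice Lelong numbers down. A secondary check is that $u$ and $v$ are bounded above near $D_1$, so that the extended functions are genuinely plurisubharmonic on $X(R)$. I would obtain this from Proposition~\ref{x-prop7.4}, after verifying its fibrewise hypothesis using the fibrewise generation statement.
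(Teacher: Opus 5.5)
Your proposal is correct and follows essentially the same route as the paper: plurisubharmonic functions built from $F$ and the dual generator $G$, then Siu's theorem with Lemma~\ref{x-lem5.7} to make the Lelong number constant along $\{z_1=0\}$ (the dual section $G$ is what makes the sublevel sets analytic as well), then a two-sided squeeze on each fibre $\pi^{-1}(Q)$. The differences are minor: you keep the exact weights and get upper-boundedness near $D_1$ from Proposition~\ref{x-prop7.4}, where the paper perturbs to $\alpha'>\alpha$ and $\beta'>1-\alpha-\varepsilon$; and on the fibres you use that restricting the $\liminf$ can only increase it, together with the explicit formula for $u+v$, where the paper invokes the convexity bound of Corollary~\ref{x-cor5.5}.
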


\begin{proof}[Proof of Lemma \ref{x-lem11.3}]
By trivializing ${}_\alpha L$ using $F$, we may assume $F\equiv G\equiv1$ on $X(R)$. 
We take a sufficiently large positive real number $N$. 
Then 
\[
\log |F|_{h\cdot e^{-\chi (\alpha, -N)}}
\]
is plurisubharmonic on $X^*(R)$ by Lemma \ref{x-lem6.9}. 
Thus, for any $\alpha'>\alpha$, we see that 
\[
\log |F|_{h\cdot e^{-\chi (\alpha', -N)}}
\] 
is plurisubharmonic on $X(R)$ (see, for example, 
\cite[(3.3.41) Theorem]{noguchi-ochiai} or  
\cite[Chapter~I, (5.24) Theorem]{demailly}).
Similarly, we may assume that 
\[
\log |G|_{h^\vee \cdot e^{-\chi (\beta', -N)}}
\] is also plurisubharmonic on $X(R)$ for any $\beta'>1-\alpha-\varepsilon$. 

We can write $h=|\cdot |^2 e^{-2\varphi_\alpha}$. 
Then we obtain that 
\begin{equation}\label{x-eq11.2}
-2\varphi_\alpha -\chi (\alpha', -N)
\end{equation} 
and 
\begin{equation}\label{x-eq11.3}
2\varphi_\alpha -\chi (\beta', -N)
\end{equation}  
are plurisubharmonic. 
By considering the Lelong number at $Q'=(0, Q)$, we obtain 
\begin{equation}\label{x-eq11.4}
\begin{split}
&\liminf_{z\to Q'} \frac{-2\varphi_\alpha-\chi(\alpha', -N)}{\log |z-Q'|} 
\\ &=
\lim_{r\to +0} 
\frac{1}{r^{2(n-1)}} 
\int _{B(Q', r)} \frac{\sqrt{-1}}{\pi} \partial \overline \partial (-2\varphi_\alpha) 
\wedge \left(\frac{\sqrt{-1}}{2\pi} \sum _{i=1}^n dz_i \wedge d \overline z_i
\right)^{n-1}+2\alpha'
\end{split}
\end{equation}
and 
\begin{equation}\label{x-eq11.5}
\begin{split}
&\liminf_{z\to Q'} \frac{2\varphi_\alpha-\chi(\beta', -N)}{\log |z-Q'|} \\ &=
\lim_{r\to +0} 
\frac{1}{r^{2(n-1)}} 
\int _{B(Q', r)} \frac{\sqrt{-1}}{\pi} \partial \overline \partial (2\varphi_\alpha) 
\wedge \left(\frac{\sqrt{-1}}{2\pi} \sum _{i=1}^n dz_i \wedge d \overline z_i
\right)^{n-1} +2\beta' 
\end{split} 
\end{equation} 
by Lemma \ref{x-lem5.6}. We put 
\[
\Psi (Q'):= \lim_{r\to +0} 
\frac{1}{r^{2(n-1)}} 
\int _{B(Q', r)} \frac{\sqrt{-1}}{\pi} \partial \overline \partial (2\varphi_\alpha) 
\wedge \left(\frac{\sqrt{-1}}{2\pi} \sum _{i=1}^n dz_i \wedge d \overline z_i
\right)^{n-1}
\] 
Then $\Psi$ is an $\mathbb R$-valued function on $V:=\{0\}\times 
\Delta(R)^{n-1}$. 
By Siu's theorem in Definition \ref{x-def5.2} (see, for example, \cite[(13.3) Corollary]{demailly1}), 
\[
\{x\in V\mid \Psi (x)\geq a\} \quad \text{and} \quad 
\{x\in V\mid \Psi (x)\leq b\} 
\] 
are closed analytic subsets of $V$. 
By Lemma \ref{x-lem5.7}, we obtain that 
$\Psi$ is constant on $V$. We put $\Psi :=2A\in \mathbb R$.  
Then, by \eqref{x-eq11.4} and the convexity properties of 
plurisubharmonic functions, that is, Lemma \ref{x-lem5.3} and 
Corollary \ref{x-cor5.5}, we have 
\begin{equation}\label{x-eq11.6}
-2\varphi_\alpha -\chi (\alpha', -N) \leq (-2A+2\alpha') \log \frac{|z-Q'|}{R_0} +M_1, 
\end{equation} 
where $M_1$ is the maximum of $-2\varphi_\alpha -\chi (\alpha', -N)$ on $\overline 
B(Q', R_0)\subset X(R)$. 
Similarly, by \eqref{x-eq11.5} and Corollary \ref{x-cor5.5}, we have 
\begin{equation}\label{x-eq11.7}
2\varphi_\alpha -\chi (\beta', -N) \leq (2A+2\beta') \log \frac{|z-Q'|}{R_0} +M_2, 
\end{equation} 
where $M_2$ is the maximum of $2\varphi_\alpha -\chi (\beta', -N)$ on $\overline B(Q', R_0)
\subset X(R)$.
By \eqref{x-eq11.6}, we obtain 
\begin{equation}\label{x-eq11.8} 
\liminf _{z_1\to 0} \frac{-\varphi_\alpha|_{\pi^{-1}(Q)}}{\log |z_1|}\geq -A. 
\end{equation} 
By \eqref{x-eq11.7}, we have 
\begin{equation}\label{x-eq11.9} 
\liminf _{z_1\to 0} \frac{\varphi_\alpha|_{\pi^{-1}(Q)}}{\log |z_1|}\geq A. 
\end{equation} 
Therefore, by \eqref{x-eq11.8} and \eqref{x-eq11.9}, we have 
\begin{equation}\label{x-eq11.10}
A\leq \liminf_{z_1\to 0} \frac{\varphi_\alpha|_{\pi^{-1}(Q)}}{\log |z_1|}
\leq \limsup _{z_1\to 0} \frac{\varphi_\alpha|_{\pi^{-1}(Q)}}{\log |z_1|}
\leq A. 
\end{equation}
Thus, we obtain 
\begin{equation}\label{x-eq11.11} 
\gamma \left({}_\alpha (L|_{\pi^{-1}(Q)})\right) =\lim_{z_1\to 0} 
\frac{\varphi_\alpha|_{\pi^{-1}(Q)}}{\log |z_1|}=A. 
\end{equation} 
This means that 
\[
\gamma \left({}_\alpha (L|_{\pi^{-1}(Q)})\right)
\] 
is constant with respect to $Q\in \left(\Delta(R)\right)^{n-1}$. 
This is what we wanted. 
\end{proof}

By the above results, we have the following statement. 

\begin{prop}\label{x-prop11.4}
For any $\alpha \in \mathbb R$, ${}_\alpha L$ is a line bundle 
on $X(R)$ for some $0<R<1$. 
Moreover, $\gamma \left({}_\alpha (L|_{\pi^{-1}(Q)})\right)$ is independent of 
$Q\in \Delta(R)^{n-1}$. 
\end{prop}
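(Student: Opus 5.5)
The plan is to reduce Proposition~\ref{x-prop11.4} to Lemmas~\ref{x-lem11.1} and~\ref{x-lem11.3} by removing hypothesis \eqref{x-eq11.1}. That hypothesis says $\alpha$ is not itself the parabolic weight of the restricted line bundle $(L|_{\pi^{-1}(P)}, h|_{\pi^{-1}(P)})$. Since $L|_{\pi^{-1}(P)}$ has rank one, $\Par_\alpha(L|_{\pi^{-1}(P)})$ consists of a single number $b_P\in(\alpha-1,\alpha]$. So \eqref{x-eq11.1} fails only when $b_P=\alpha$.

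In that case I pass to a slightly larger index. By Theorem~\ref{x-thm9.1}, on the fibre $\pi^{-1}(P)$ we have ${}_{\alpha'}(L|_{\pi^{-1}(P)})={}_\alpha(L|_{\pi^{-1}(P)})$ for all $\alpha'\in[\alpha,\alpha+1)$. The weight relative to $\alpha'$ is still $b_P=\alpha\neq\alpha'$, so $\alpha'\notin\Par_{\alpha'}$. Lemma~\ref{x-lem11.1} therefore applies to $\alpha'$ and shows that ${}_{\alpha'}L$ is a line bundle on some $X(R)$. Its generator $F$ restricts on every fibre $\pi^{-1}(Q)$ to a generator of ${}_{\alpha'}(L|_{\pi^{-1}(Q)})$. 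By Lemma~\ref{x-lem11.3}, $\gamma({}_{\alpha'}(L|_{\pi^{-1}(Q)}))$ is independent of $Q$.

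It remains to identify ${}_\alpha L$ with ${}_{\alpha'}L$ near the origin. The inclusion ${}_\alpha L\subset{}_{\alpha'}L$ is automatic. For the reverse inclusion, take $\Phi=uF\in{}_{\alpha'}L$ with $u$ holomorphic on $X(R)$, and show $\Phi\in{}_\alpha L$.
\begin{itemize}
\item By the constancy just obtained, $\gamma({}_{\alpha'}(L|_{\pi^{-1}(Q)}))=b_P=\alpha$ for every $Q$, since for $Q=P$ it equals the single weight $b_P$. Hence on each fibre the generator $F|_{\pi^{-1}(Q)}$ has weight exactly $\alpha$.
\item So $|\Phi|_{\pi^{-1}(Q)}|_h=O(|z_1|^{-\alpha-\varepsilon})$ for every $\varepsilon>0$ and every $Q$.
\item Proposition~\ref{x-prop7.4} converts these fibrewise bounds into the uniform estimate $|\Phi|_h^2\le B|z_1|^{-2\alpha}(-\log|z_1|)^M$ on $X^*(R')$, which gives $\Phi\in{}_\alpha L$.
\end{itemize}
Thus ${}_\alpha L={}_{\alpha'}L$ is a line bundle. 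The fibrewise equality also shows $\gamma({}_\alpha(L|_{\pi^{-1}(Q)}))=\gamma({}_{\alpha'}(L|_{\pi^{-1}(Q)}))$, because these sheaves on $\Delta^*$ coincide once the weight lies in $(\alpha-1,\alpha]\cap(\alpha'-1,\alpha']$. The constancy claim then follows.

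In the case where \eqref{x-eq11.1} holds, both conclusions are exactly Lemmas~\ref{x-lem11.1} and~\ref{x-lem11.3}.

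The main obstacle is the bookkeeping of which $\gamma$ is being compared. I must make sure that the value $\gamma$ given by Lemma~\ref{x-lem11.3}, which is the weight of $F$ on each fibre, really equals the single parabolic weight on that fibre, and that it stays equal to $\alpha$ uniformly in $Q$. This is what justifies invoking Proposition~\ref{x-prop7.4} with the exponent $a_1=\alpha$ rather than $\alpha'$.
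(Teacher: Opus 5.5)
Your proof is correct and follows essentially the same route as the paper: you shift from $\alpha$ to a slightly larger index $\alpha'=\alpha+\delta$ at which \eqref{x-eq11.1} holds, use Lemma~\ref{x-lem11.3} together with the fact that the single weight on $\pi^{-1}(P)$ is $\alpha$ to get $\gamma\equiv\alpha$ on every fibre, and then invoke Proposition~\ref{x-prop7.4} to descend from ${}_{\alpha'}L$ to ${}_\alpha L$. The only difference is packaging: the paper re-extends $f$ and $f^{-1}$ at level $\alpha+\delta$ via Proposition~\ref{x-prop8.5} and reruns the generator argument of Lemma~\ref{x-lem11.1}, whereas you apply Lemma~\ref{x-lem11.1} at $\alpha'$ as a black box and prove ${}_\alpha L={}_{\alpha'}L$ directly, which also makes the constancy of $\gamma$ at level $\alpha$ immediate.
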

\begin{proof}[Proof of Proposition \ref{x-prop11.4}]
By Lemma \ref{x-lem11.1}, we may assume that 
\[
\alpha \in \Par_\alpha \left(L|_{\pi^{-1}(P)}, h|_{\pi^{-1}(P)}\right). 
\]
Let $f$ be a generator of ${}_\alpha (L|_{\pi^{-1}(P)})$ on $\pi^{-1}(P)$. 
We take a sufficiently small positive real number $\delta$. 
Then 
\[
\int _{\pi^{-1}(P)\cap X(R)} |f|^2_{h\cdot e^{-\chi (\alpha +\delta, N)}} 
e^{-\psi}\cdot 
\frac{\sqrt{-1}}{2} dz_1\wedge d\overline z_1<\infty,  
\] 
where $\psi=\log |z_1|^2$. 
By Proposition \ref{x-prop8.5} and Corollary \ref{x-cor7.5}, 
we can take a holomorphic section $F$ of $L$ on $X^*(R)$ 
such that $F|_{\pi^{-1}(P)}=f$ and $F\in{}_{\alpha +\delta}L$. 
We may assume that 
$\alpha +\delta\not\in \Par_{\alpha+\delta} \left(L|_{\pi^{-1}(P)}, 
h|_{\pi^{-1}(P)}\right)$. 
By Lemma \ref{x-lem11.3} and \cite[Lemma 13.1]{fujino-fujisawa-ono}, 
we have 
\[
\alpha=\gamma \left({}_{\alpha+\delta} (L|_{\pi^{-1}(P)})\right)
=\gamma \left({}_{\alpha+\delta} (L|_{\pi^{-1}(Q)})\right)
\]
for any $Q\in (\Delta(R))^{n-1}$. 
Thus, by Proposition \ref{x-prop7.4}, 
we have $F\in {}_\alpha L$. Similarly, by 
Proposition \ref{x-prop8.5} and Corollary \ref{x-cor7.5}, we can extend $g:=f^{-1}$ and obtain $G\in 
{}_{1-\alpha -\delta-\varepsilon} (L^\vee)$ on $X^*(R)$ such that 
$G|_{\pi^{-1}(P)} =g$, where 
$\varepsilon$ is a sufficiently small positive real number. Hence, by the same argument 
as in the proof of Lemma \ref{x-lem11.1}, 
${}_\alpha L$ and ${}_{1-\alpha -\delta-\varepsilon}(L^\vee)$ are line bundles 
on $X(R)$ for some $0<R<1$. 
Moreover, $F$ is a generator of ${}_\alpha L$ and 
$G$ is a generator of ${}_{1-\alpha-\delta-\varepsilon} (L^\vee)$ on 
$X(R)$ for some $0<R<1$. 
We can easily check that 
\[
{}_\alpha (L|_{\pi^{-1}(Q)})=({}_\alpha L)|_{\pi^{-1}(Q)}
\]
for every $Q\in \Delta(R)^{n-1}$. 
By the same proof of Lemma \ref{x-lem11.3}, 
we see that $\gamma \left({}_\alpha (L|_{\pi^{-1}(Q)})\right)$ is independent 
of $Q\in \Delta(R)^{n-1}$. 
We finish the proof of Proposition \ref{x-prop11.4}. 
\end{proof}

The following theorem is 
the main result of this section. 

\begin{thm}\label{x-thm11.5}
Let $(L, h)$ be an acceptable line bundle on a partially 
punctured polydisk 
$\Delta^*\times \Delta^{n-1}$. 
Then ${}_\alpha L$ is a line 
bundle on $\Delta^n$ for any $\alpha \in \mathbb R$. 
Moreover, $\left({}_\alpha L\right)|_{\pi^{-1}(Q)}={}_{\alpha} 
\left(L|_{\pi^{-1}(Q)}\right)$ holds for every $Q\in \Delta^{n-1}$. 
We also have that $\gamma \left({}_{\alpha} 
\left(L|_{\pi^{-1}(Q)}\right)\right)$ is 
independent of $Q\in \Delta^{n-1}$. 
\end{thm}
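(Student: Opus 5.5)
Theorem~\ref{x-thm11.5} is essentially a globalization of Proposition~\ref{x-prop11.4}. That proposition is stated near the origin, where the relevant point of $\{0\}\times\Delta^{n-1}$ is $P=(0,\ldots,0)$. The plan is to first move it to an arbitrary base point, then glue the local results and check that they are compatible on overlaps.

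\emph{Step 1: locality.} Being a line bundle is a local property of ${}_\alpha L$. At points of $X^*=\Delta^*\times\Delta^{n-1}$ there is nothing to prove, since ${}_\alpha L=L$ there. So fix $Q_0\in\Delta^{n-1}$ and the point $Q_0'=(0,Q_0)\in D_1$. Choose $\rho>0$ with $\Delta(\rho)^{n-1}+Q_0\subset\Delta^{n-1}$.

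\emph{Step 2: recentering.} Consider the holomorphic map
\[
(w_1,\ldots,w_n)\mapsto \bigl(\rho w_1,\ Q_0+\rho(w_2,\ldots,w_n)\bigr)
\]
from $\Delta^n$ into $\Delta^n$; it sends $\Delta^*\times\Delta^{n-1}$ into $\Delta^*\times\Delta^{n-1}$. The pull-back of $(L,h)$ is still acceptable. Indeed, the pull-back of $\omega_P$ dominates a uniform positive multiple of the Poincar\'e metric in the $w$-coordinates, since both are comparable on such a relatively compact sub-polydisk; hence the curvature stays bounded.

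Next, the prolongation is unchanged. Near $w_1=0$ we have $|z_1|=\rho|w_1|$, so the growth condition $O(|z_1|^{-\alpha-\varepsilon})$ is equivalent to $O(|w_1|^{-\alpha-\varepsilon})$. Thus ${}_\alpha$ of the pull-back is the pull-back of ${}_\alpha L$, and the same holds fibrewise for the restrictions to $\pi^{-1}(Q)$.

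Applying Proposition~\ref{x-prop11.4} in the $w$-coordinates then shows that ${}_\alpha L$ is a line bundle on a neighborhood $U_{Q_0}=\Delta(R\rho)\times(Q_0+\Delta(R\rho)^{n-1})$ of $Q_0'$. On that neighborhood $\gamma({}_\alpha(L|_{\pi^{-1}(Q)}))$ is constant in $Q$. From the proof of Proposition~\ref{x-prop11.4} (the generator $F$ restricts to generators on every fibre) we also get $({}_\alpha L)|_{\pi^{-1}(Q)}={}_\alpha(L|_{\pi^{-1}(Q)})$ for $Q$ in this neighborhood. One caveat for the fibre identity: restricting to $\pi^{-1}(Q)$ means restricting over a disk, which is larger than $U_{Q_0}\cap\pi^{-1}(Q)$. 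But ${}_\alpha$ on $\Delta^*$ only depends on the germ at $0$, away from which it coincides with $L$. So I interpret the identity as an equality of subsheaves on $\pi^{-1}(Q)\cong\Delta$, checked near $z_1=0$ from the local result and trivially elsewhere.

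\emph{Step 3: gluing and constancy.} Since ${}_\alpha L$ is a globally defined subsheaf and local freeness holds on the open cover $\{U_{Q_0}\}\cup\{X^*\}$, it is a line bundle on $\Delta^n$. The function $Q\mapsto\gamma({}_\alpha(L|_{\pi^{-1}(Q)}))$ is locally constant on $\Delta^{n-1}$ by Step 2. Since $\Delta^{n-1}$ is connected, the function is constant.

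\emph{Main obstacle.} The step needing care is Step 2, namely that acceptability and the prolongations are invariant under the affine rescaling. This requires a uniform comparison of the Poincar\'e metrics, $c\,\omega_P^{(w)}\le\phi^*\omega_P^{(z)}$, near $w_1=0$ and on the whole $w$-polydisk. I would check it factor by factor:
\begin{itemize}
\item For the $z_1$-factor, compare $|\rho w_1|^2\log^2|\rho w_1|^2$ with $|w_1|^2\log^2|w_1|^2$; their ratio is bounded for $|w_1|<1$ away from the unit circle.
\item For the other factors, $(1-|z_k|^2)^{-2}$ is bounded above and below on the image, while $(1-|w_k|^2)^{-2}\ge1$, which gives the needed direction of the inequality.
\end{itemize}
If a boundary issue arises at $|w|\to1$, I would simply shrink to a smaller polydisk before applying Proposition~\ref{x-prop11.4}. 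This is harmless because the proposition is itself only local.
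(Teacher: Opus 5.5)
Your overall strategy is the paper's. Its proof of Theorem~\ref{x-thm11.5} is just ``shift and rescale around an arbitrary point of $\Delta^{n-1}$, then apply Proposition~\ref{x-prop11.4}''. Your gluing, the fibrewise identity, and the constancy of $\gamma$ via local constancy and connectedness of $\Delta^{n-1}$ are fine. (Note only that $\gamma$ is unchanged under $z_1=\rho w_1$, because $\log|z_1|/\log|w_1|\to 1$.)

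The one real error is in the step you singled out yourself: the comparison of Poincar\'e metrics goes the wrong way. A bound on a $(1,1)$-form survives when you measure it with a \emph{larger} metric. If $\phi^*\omega_P^{(z)}\le C\,\omega_P^{(w)}$, then $|\phi^*\Theta_h|_{h,\omega_P^{(w)}}\le C\,|\Theta_h|_{h,\omega_P^{(z)}}\circ\phi$. The inequality you state, $c\,\omega_P^{(w)}\le\phi^*\omega_P^{(z)}$, gives no upper bound on the curvature. It is also false on the unit $w$-polydisk: as $|w_1|\to1$ or $|w_k|\to1$, the $w$-Poincar\'e metric blows up while $\phi^*\omega_P^{(z)}$ stays bounded. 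Shrinking does not rescue this. Proposition~\ref{x-prop11.4} needs acceptability on the full $\Delta^*\times\Delta^{n-1}$ with respect to its own Poincar\'e metric, so shrinking would force another rescaling and the same comparison.

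The fix is that the correct inequality holds globally, with no shrinking. Write $Q_0=(q_2,\ldots,q_n)$. In the $w_1$-direction,
\[
\phi^*\Bigl(\frac{\sqrt{-1}\,dz_1\wedge d\overline z_1}{|z_1|^2(-\log|z_1|^2)^2}\Bigr)
=\frac{\sqrt{-1}\,dw_1\wedge d\overline w_1}{|w_1|^2(-\log\rho^2-\log|w_1|^2)^2}
\le \frac{\sqrt{-1}\,dw_1\wedge d\overline w_1}{|w_1|^2(-\log|w_1|^2)^2},
\]
since $-\log\rho^2\ge0$. In the $w_k$-directions ($k\ge2$), choose $\rho$ with $|q_k|+\rho<1$ strictly; you need strictness here. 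Then $(1-|z_k|^2)^{-2}$ is bounded on the image, while $(1-|w_k|^2)^{-2}\ge1$. This is exactly the observation you made, but it proves $\phi^*\omega_P^{(z)}\le C\,\omega_P^{(w)}$, not the reverse. With this, the pulled-back bundle is acceptable on all of $\Delta^*\times\Delta^{n-1}$ in the $w$-coordinates, and the rest of your argument goes through unchanged.
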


\begin{proof}[Proof of Theorem \ref{x-thm11.5}]
We take an arbitrary point $P\in \Delta^{n-1}$. 
After shifting and rescaling the coordinate system around $P$, 
we apply Proposition \ref{x-prop11.4}. 
Then we have the desired properties. 
\end{proof}

\section{Acceptable vector bundles 
on $\Delta^*\times \Delta^{n-1}$}\label{x-sec12}

In this section, we study an acceptable vector bundle $(E,h)$ with
$\rank E\geq2$ on a partially punctured polydisk 
\[
X^*=\Delta^*\times \Delta^{n-1}.
\]
Our approach heavily relies on the results established in
Section~\ref{x-sec11}.

\begin{thm}\label{x-thm12.1}
Let $(E,h)$ be an acceptable vector bundle on a 
partially punctured polydisk $\Delta^*\times \Delta^{n-1}$.
Then, for any $\alpha\in\mathbb R$, ${}_\alpha E$ is locally free on
$\Delta^n$.
Moreover,
\[
{}_\alpha(E|_{\pi^{-1}(Q)})=({}_\alpha E)|_{\pi^{-1}(Q)}
\]
for every $Q\in \Delta^{n-1}$.
We also note that
\[
\Par_\alpha(E|_{\pi^{-1}(Q)},h|_{\pi^{-1}(Q)})
\]
is independent of $Q\in \Delta^{n-1}$.
\end{thm}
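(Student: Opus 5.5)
Since the statement is local, I will work near the origin, with $P:=(0,\ldots,0)\in\Delta^{n-1}$; a general point is handled by shifting and rescaling coordinates, as in the proof of Theorem~\ref{x-thm11.5}. The plan is to copy the rank-one argument of Section~\ref{x-sec11}: extend a compatible frame from the fiber $\pi^{-1}(P)$ by the $L^2$ extension theorem (Proposition~\ref{x-prop8.5}). The one place where a vector bundle differs from a line bundle is controlling the exact weights of the extended sections. There I will use determinants, so that Theorem~\ref{x-thm11.5} can be applied to $(\det E,\det h)$, which is acceptable by Lemma~\ref{x-lem6.2}.

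\textbf{Step 1: extending a frame.} By Theorem~\ref{x-thm9.1}, ${}_\alpha(E|_{\pi^{-1}(P)})$ is a vector bundle on the disk. Take a frame $f_1,\ldots,f_r$ of it that is compatible with the parabolic filtration, with weights $b_j:=\deg(f_j)\in(\alpha-1,\alpha]$. Fix a small $\delta>0$ with $b_j+\delta$ not a parabolic weight. As in the proof of Proposition~\ref{x-prop11.4}, the twisted metric $h\,e^{-\chi(b_j+\delta,N)}$ is Nakano semipositive for $N\gg0$ (Corollary~\ref{x-cor6.7}), and $f_j$ is $L^2$ for the weight $e^{-\psi}$. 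Proposition~\ref{x-prop8.5} then gives holomorphic sections $F_j$ on $X^*(R)$ with $F_j|_{\pi^{-1}(P)}=f_j$, and Corollary~\ref{x-cor7.5} gives $F_j\in{}_{b_j+\delta}E$. In the same way, extend the dual frame $f^\vee_j$ of ${}_{1-\alpha-\varepsilon}(E^\vee|_{\pi^{-1}(P)})$ to sections $G_j$ of $E^\vee$ with $G_j\in{}_{-b_j+1-\varepsilon'}$; this uses Theorem~\ref{x-thm1.5} in the curve case, from \cite{fujino-fujisawa-ono}.

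\textbf{Step 2: sharpening $b_j+\delta$ to $b_j$ (the main obstacle).} For rank one this was done with the constancy of $\gamma$ in Lemma~\ref{x-lem11.3}. For higher rank I will use $\det$. Apply Theorem~\ref{x-thm11.5} to $\det E$: the line bundle ${}_c(\det E)$ is locally free, and $\gamma({}_c(\det E|_{\pi^{-1}(Q)}))$ does not depend on $Q$. On each fiber $\pi^{-1}(Q)$, with $Q$ near $P$, I want to show that $F_1|_{\pi^{-1}(Q)},\ldots,F_r|_{\pi^{-1}(Q)}$ form a frame of ${}_{\alpha+\delta}(E|_{\pi^{-1}(Q)})$ compatible with the filtration, and that $\deg(F_j|_{\pi^{-1}(Q)})=b_j$. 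To get this, use the pairings $G_i(F_j)$. Near $P$ these are holomorphic by the growth bounds, and they equal $\delta_{ij}$ on $\pi^{-1}(P)$, so the matrix $(G_i(F_j))$ is invertible near the origin and can be normalized to the identity. Its entries lie in ${}_{b_j-b_i+1-\varepsilon'+\delta}\mathcal O$, so the triangular structure with respect to the weights is preserved. From duality and the fiberwise criterion \cite[Lemma~13.1]{fujino-fujisawa-ono}, together with the constancy of $\gamma$ for ${}_{\alpha+\delta}\det E$ on fibers, the sum $\sum_j\deg(F_j|_{\pi^{-1}(Q)})$ equals $\sum_j b_j$. Upper semicontinuity of each individual degree then forces $\deg(F_j|_{\pi^{-1}(Q)})=b_j$ for every $j$. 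I expect the delicate point here to be establishing this individual equality, rather than just the equality of the sums. The fallback is to apply the determinant argument to the partial wedges $F_1\wedge\cdots\wedge F_k$, ordered by weight, viewed as sections of $\bigwedge^k E$, which is acceptable; their restrictions are compatible because the frame is adapted.

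\textbf{Step 3: conclusion.} Once the fiberwise degrees are $b_j$ for all $Q$, Proposition~\ref{x-prop7.4} gives $F_j\in{}_{b_j}E\subset{}_\alpha E$. Similarly $G_i\in{}_{-b_i+1-\varepsilon}E^\vee$. For any $\Phi\in{}_\alpha E$ we have $G_i(\Phi)\in{}_{1-\varepsilon''}\mathcal O_{X^*}=\mathcal O_X$, so $\Phi=\sum_i G_i(\Phi)F_i$, exactly as in Claim~\ref{x-claim11.2}. Hence ${}_\alpha E$ is free with basis $\{F_j\}$. Restricting to fibers gives $({}_\alpha E)|_{\pi^{-1}(Q)}={}_\alpha(E|_{\pi^{-1}(Q)})$. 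Moreover, the degrees $b_j$ do not depend on $Q$, so $\Par_\alpha(E|_{\pi^{-1}(Q)},h|_{\pi^{-1}(Q)})=\{b_1,\ldots,b_r\}$ is independent of $Q$.
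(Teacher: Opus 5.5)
The gap is in Step~2, at the point you flag yourself. The determinant only sees one number. Applying Theorem~\ref{x-thm11.5} to $(\det E,\det h)$, the most you can show is $\deg\bigl((F_1\wedge\cdots\wedge F_r)|_{\pi^{-1}(Q)}\bigr)=\sum_j b_j$ for $Q$ near $P$.

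Passing from this to $\sum_j\deg(F_j|_{\pi^{-1}(Q)})=\sum_j b_j$ is circular. Hadamard's inequality $|F_1\wedge\cdots\wedge F_r|_h\le\prod_j|F_j|_h$ gives only $\sum_j\deg(F_j|_{\pi^{-1}(Q)})\ge\sum_j b_j$. Equality is exactly the statement that the restricted frame is compatible with the filtration at $Q$, which is what you want to prove.

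Even granting the equality of sums, semicontinuity does not finish the job. If the $\deg(F_j|_{\pi^{-1}(Q)})$ are upper semicontinuous with constant sum, each one is merely continuous in $Q$. To force $\deg(F_j|_{\pi^{-1}(Q)})=b_j$, the degrees must take values in a fixed discrete set. That requires $\Par(E|_{\pi^{-1}(Q)})$ not to move with $Q$, which is one of the conclusions of the theorem. Moreover, your duality bounds only give $b_j-\delta'\le\deg\le b_j+\delta$. Upper semicontinuity of fiber degrees is also not established: the Lelong number of the twisted $\log|F_j|$ at $(0,Q)$ bounds the fiber degree from one side only.

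The partial-wedge fallback fails for the same reason. Lemma~\ref{x-lem11.3} uses $|s|\,|t|=|t(s)|=1$, which is available only in rank one. That identity makes the Lelong numbers of the two plurisubharmonic functions complementary, so that both upper and lower level sets are analytic and Lemma~\ref{x-lem5.7} applies. For $s=F_1\wedge\cdots\wedge F_k$ and $t=G_1\wedge\cdots\wedge G_k$ in $\bigwedge^kE$ with $k<r$, you only get an inequality between Lelong numbers. Nor is the subbundle spanned by $F_1,\dots,F_k$, with its induced metric, known to be acceptable.

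The paper gets around this by never sharpening in the generic case, and by recovering the individual weights from \emph{all} levels $a$ at once.

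First, suppose $\alpha\notin\Par_\alpha$ at $P$. One extends with the twist $h(\alpha,N)$, so that $V_i\in{}_\alpha E$ directly. The pairing argument makes $\{V_i\}$ a frame of ${}_\alpha E$ whose restrictions are frames of ${}_\alpha(E|_{\pi^{-1}(Q)})$. Hence $\gamma({}_\alpha(E|_{\pi^{-1}(Q)}))=\deg\bigl((V_1\wedge\cdots\wedge V_r)|_{\pi^{-1}(Q)}\bigr)$, and the rank-one argument shows this is constant. Here only the sum is needed, so the determinant suffices.

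Next, the set $\mathcal P$ of levels that are weights over some point of the countable dense set $\Lambda$ is countable. So $\gamma({}_a)$ is independent of $Q$ for a dense set of $a$. Lemma~\ref{x-lem9.2}, which rests on right continuity of $a\mapsto\gamma({}_a)$, then shows that the whole function $a\mapsto\gamma({}_a(E|_{\pi^{-1}(Q)}))$ is independent of $Q$. Its jumps record the weights with multiplicities, so $\Par_\alpha$ is independent of $Q$.

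Only then is the case $\alpha\in\Par_\alpha$ handled. One extends into ${}_{\alpha+\delta}E$; since no fiber has a weight in $(\alpha,\alpha+\delta]$, Proposition~\ref{x-prop7.4} gives $V_i\in{}_\alpha E$ at once. Note also that local freeness only needs $F_j\in{}_\alpha E$, not $F_j\in{}_{b_j}E$.

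Separately, your index ${}_{-b_j+1-\varepsilon'}$ for $G_j$ is off. With it, $G_i(F_j)$ (once $b_j-b_i>\varepsilon'-\delta$) and $G_i(\Phi)$ in Step~3 can acquire a simple pole along $z_1=0$. Extend $f_j^\vee$ with weight $-b_j+\delta'$ instead, or uniformly into ${}_{1-\alpha-\varepsilon}(E^\vee)$ with $\delta<\varepsilon$.
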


A more detailed description of the 
sheaf ${}_\alpha E$ and its local frames
can be found in the proof of Theorem~\ref{x-thm12.1}.

\begin{proof}[Proof of Theorem~\ref{x-thm12.1}]
We divide the proof into several steps.

\setcounter{step}{0}
\begin{step}\label{x-step12.1.1}
Set $P:=(0,\ldots,0)\in \Delta^{n-1}$.
In this step, we prove that ${}_\alpha E$ is locally free on $X(R)$ for some
$0<R<1$, under the assumption that
\[
\alpha \notin
\Par_\alpha(E|_{\pi^{-1}(P)},h|_{\pi^{-1}(P)}).
\]

Let $\{v_1,\ldots,v_r\}$ be a frame of
${}_\alpha(E|_{\pi^{-1}(P)})$. 
We note that ${}_\alpha (E|_{\pi^{-1}(P)})$ is locally 
free by Theorem \ref{x-thm9.1}. 
Fix $0<R<1$.
As in the line bundle case (cf.\ the proof of Lemma~\ref{x-lem11.1}),
by Proposition~\ref{x-prop8.5} and Corollary~\ref{x-cor7.5}, there exist
holomorphic sections $\{V_1,\ldots,V_r\}$ of ${}_\alpha E$ on $X^*(R)$ such
that
\[
V_i|_{\pi^{-1}(P)}=v_i \qquad (1\le i\le r).
\]

Consider the dual frame
\[
\{w_1,\ldots,w_r\}:=\{v_1^\vee,\ldots,v_r^\vee\}
\]
of ${}_{1-\alpha-\varepsilon}(E^\vee|_{\pi^{-1}(P)})$ for some
$0<\varepsilon\ll1$ (see \cite[Theorem~1.12]{fujino-fujisawa-ono}).
By the same argument, we obtain holomorphic sections
$\{W_1,\ldots,W_r\}$ of ${}_{1-\alpha-\varepsilon}(E^\vee)$ on $X^*(R)$ such
that
\[
W_i|_{\pi^{-1}(P)}=w_i=v_i^\vee .
\]

\begin{claim}\label{x-claim12.2}
For some $0<R<1$, the families $\{V_1,\ldots,V_r\}$ and
$\{W_1,\ldots,W_r\}$ form frames of ${}_\alpha E$ and
${}_{1-\alpha-\varepsilon}(E^\vee)$ on $X(R)$, respectively.
\end{claim}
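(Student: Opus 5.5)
The plan is to imitate the proof of Claim~\ref{x-claim11.2}, replacing the pairing $F\cdot G$ by the matrix of pairings between the two families. Consider the holomorphic $r\times r$ matrix-valued function
\[
M := \bigl(\langle V_i, W_j\rangle\bigr)_{1\le i,j\le r}
\]
on $X^*(R)$, where $\langle\ ,\ \rangle$ is the natural pairing $E\otimes E^\vee\to\mathcal O$.

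\textbf{Step 1: $M$ extends holomorphically across $D$.} Since $V_i\in{}_\alpha E$ and $W_j\in{}_{1-\alpha-\varepsilon}(E^\vee)$, we have $|\langle V_i,W_j\rangle|\le |V_i|_h|W_j|_{h^\vee}$. Hence each entry lies in ${}_{1-\varepsilon}(\mathcal O_{X^*})$, and by Riemann extension this equals $\mathcal O_X$. So $M$ extends holomorphically to $X(R)$.

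\textbf{Step 2: $M$ is invertible near the origin.} On $\pi^{-1}(P)$ we have $M|_{\pi^{-1}(P)}=I_r$, because $W_j|_{\pi^{-1}(P)}=v_j^\vee$. In particular $\det M(0)=1$. After shrinking $R$, $\det M$ has no zeros on $X(R)$. Replace $W$ by $W' := (M^{-1})^{T}W$, meaning $W'_j=\sum_k (M^{-1})_{kj}W_k$. The coefficients are holomorphic on $X(R)$, so the $W'_j$ still lie in ${}_{1-\alpha-\varepsilon}(E^\vee)$, and now $\langle V_i,W'_j\rangle=\delta_{ij}$. In particular $\{V_i\}$ is a frame of $E$ on $X^*(R)$, and $\{W'_j\}$ is its dual frame.

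\textbf{Step 3: generation.} Let $\Phi\in{}_\alpha E$ be a local section on an open set of $X(R)$. Write $\Phi=\sum_i\langle\Phi,W'_i\rangle V_i$ on the complement of $D$. Each coefficient is bounded by $|\Phi|_h|W'_i|_{h^\vee}=O(|z_1|^{-1+\varepsilon-\delta})$ for every $\delta>0$, so as in Step~1 it extends holomorphically. Hence ${}_\alpha E=\bigoplus\mathcal O_{X(R)}V_i$ is free. The frame is linearly independent over $\mathcal O_X$ because the $V_i$ are pointwise independent off $D$.

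The symmetric argument, now using $V_i\in{}_\alpha E$ to bound the coefficients of a section $\Psi\in{}_{1-\alpha-\varepsilon}(E^\vee)$, shows that $\{W'_j\}$ is a frame of ${}_{1-\alpha-\varepsilon}(E^\vee)$. In fact $W'_j=W_j$ up to the invertible change of basis $M$, so $\{W_j\}$ itself is a frame as well.

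The only step needing care is the estimate $|\langle V_i,W_j\rangle|=O(|z_1|^{-1+\varepsilon'})$ for some $\varepsilon'>0$. This is needed so that the coefficients lie in ${}_{1-\varepsilon}\mathcal O=\mathcal O$, which requires the weight $-1+\varepsilon$ to be strictly bigger than $-1$. The prolongations here are defined with $O(|z|^{-a-\delta})$ for all $\delta>0$, so we take $\delta<\varepsilon/2$. A function on a punctured neighborhood that is $O(|z_1|^{-1+\varepsilon/2})$ is locally $L^2$, and holomorphic functions that are $L^2$ across a divisor extend. This concludes the claim.
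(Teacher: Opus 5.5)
Your proof is correct and follows the paper's argument essentially verbatim. Both show that the pairing matrix has entries in ${}_{1-\varepsilon}(\mathcal O_{X^*})=\mathcal O_X$ and equals the identity along $\pi^{-1}(P)$, invert it near the origin to normalize $W$ into the dual frame of $V$, and then expand an arbitrary section in these frames. Your normalization $W'=(M^{-1})^{T}W$ is actually the right one; the paper's $A^{-1}W$ is missing a transpose.
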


\begin{proof}[Proof of Claim \ref{x-claim12.2}]
Note that, by construction,
\[
V_i \cdot W_j \in {}_{1-\varepsilon}\left(\mathcal O_{X^*}\right)
= \mathcal O_X,
\qquad
(V_i \cdot W_j)\big|_{\pi^{-1}(P)} = \delta_{ij}.
\]
Let
\[
A := (V_i \cdot W_j)_{i,j}
\]
be the associated $r\times r$ matrix-valued holomorphic function.
Shrinking the radius if necessary, we may assume that
\[
\det A \neq 0 \quad \text{on } X(R)
\]
for some $0<R<1$.
Define
\[
\begin{pmatrix}
W'_1 \\ \vdots \\ W'_r
\end{pmatrix}
=
A^{-1}
\begin{pmatrix}
W_1 \\ \vdots \\ W_r
\end{pmatrix}.
\]
Replacing $W_i$ by $W'_i$, we may further assume that
\[
V_i \cdot W_j = \delta_{ij}
\quad \text{on } X(R).
\]
We also assume that
\[
V_1 \wedge \cdots \wedge V_r \neq 0
\quad \text{on } X^*(R).
\]

Let $\Phi \in {}_\alpha E$.
Then $\Phi$ admits the expansion
\[
\Phi = \sum_{i=1}^r (\Phi \cdot W_i)\, V_i,
\]
where
\[
\Phi \cdot W_i \in {}_{1-\varepsilon}\left(\mathcal O_{X^*}\right)
= \mathcal O_X.
\]
It follows that $\{V_1,\ldots,V_r\}$ forms a local frame of
${}_\alpha E$ on $X(R)$.

Similarly, one checks that $\{W_1,\ldots,W_r\}$ is a local frame of
${}_{1-\alpha-\varepsilon}(E^\vee)$.
In particular, we obtain the identification
\[
{}_{1-\alpha-\varepsilon}(E^\vee) = ({}_\alpha E)^\vee.
\] 
We complete the proof. 
\end{proof}

By the same argument, for any $Q\in \Delta(R)^{n-1}$,
the restrictions $\{V_i|_{\pi^{-1}(Q)}\}$ and $\{W_i|_{\pi^{-1}(Q)}\}$ form
frames of ${}_\alpha(E|_{\pi^{-1}(Q)})$ and
${}_{1-\alpha-\varepsilon}(E^\vee|_{\pi^{-1}(Q)})$, respectively. 

The wedge product $V_1\wedge \cdots \wedge V_r$ defines a frame of
$\det({}_\alpha E)$, and similarly
$W_1\wedge \cdots \wedge W_r$ defines the dual frame of
$\det({}_{1-\alpha-\varepsilon}(E^\vee))$ on $X(R)$.
Moreover, for any $Q\in \Delta(R)^{n-1}$,
the restrictions
\[
(V_1\wedge \cdots \wedge V_r)|_{\pi^{-1}(Q)}
\quad \text{and} \quad
(W_1\wedge \cdots \wedge W_r)|_{\pi^{-1}(Q)}
\]
provide a frame of
$\det({}_\alpha(E|_{\pi^{-1}(Q)}))$ and the dual frame of
$\det({}_{1-\alpha-\varepsilon}(E^\vee|_{\pi^{-1}(Q)}))$,
respectively.
By the same argument as in the proof of Lemma~\ref{x-lem11.3}, it follows that
\[
\gamma\!\left({}_\alpha(E|_{\pi^{-1}(Q)})\right)
\]
is independent of $Q\in \Delta(R)^{n-1}$. 
\end{step}

\begin{step}\label{x-step12.1.2}
In this step, we prove that
$\Par_\alpha(E|_{\pi^{-1}(Q)},h|_{\pi^{-1}(Q)})$ is independent of
$Q\in \Delta^{n-1}$.

Set
\[
\Lambda:=\{(z_2,\ldots,z_n)\in \Delta^{n-1}\mid
\re z_i,\im z_i\in\mathbb Q \text{ for } 2\le i\le n\},
\]
and define
\[
\mathcal P:=\{\alpha\in\mathbb R\mid
\alpha\in \Par_\alpha(E|_{\pi^{-1}(Q)},h|_{\pi^{-1}(Q)})
\text{ for some } Q\in\Lambda\}.
\]

Let $P\in \Lambda$ be any point.
After shifting and rescaling coordinates around $P$, we may apply
Step~\ref{x-step12.1.1} to any $\alpha\in\mathbb R\setminus\mathcal P$.
It follows that, for such $\alpha$,
$\gamma({}_\alpha(E|_{\pi^{-1}(Q)}))$ is independent of
$Q\in \Delta^{n-1}$.
Since $\mathcal P$ is countable, Lemma~\ref{x-lem9.2} (i) 
implies that this
holds for all $\alpha\in\mathbb R$.
By Lemma~\ref{x-lem9.2} (ii), we conclude that
$\Par_\alpha(E|_{\pi^{-1}(Q)},h|_{\pi^{-1}(Q)})$ is independent of
$Q\in \Delta^{n-1}$.
\end{step}

\begin{step}\label{x-step12.1.3}
In this step, we prove that for any $\alpha\in \mathbb R$,
${}_\alpha E$ is locally free on $X(R)$ for some $0<R<1$.

Set $P:=(0,\ldots,0)\in \Delta^{n-1}$.
By Step~\ref{x-step12.1.1}, we may assume that
\[
\alpha \in
\Par_\alpha(E|_{\pi^{-1}(P)},h|_{\pi^{-1}(P)}).
\]
Let $\{v_1,\ldots,v_r\}$ be a frame of
${}_\alpha(E|_{\pi^{-1}(P)})$.
As in Step~\ref{x-step12.1.1}, there exist holomorphic sections
$\{V_1,\ldots,V_r\}$ of ${}_{\alpha+\delta}E$ on $X^*(R)$ for some
$0<\delta\ll1$ such that
\[
V_i|_{\pi^{-1}(P)}=v_i \qquad (1\le i\le r).
\]
Since $0<\delta\ll 1$, by Step~\ref{x-step12.1.2}, 
\[
\alpha+\delta\not\in \Par_{\alpha+\delta}\left(E|_{\pi^{-1}(P)}, 
h|_{\pi^{-1}(P)}\right)=\Par_{\alpha+\delta}\left(E|_{\pi^{-1}(Q)}, 
h|_{\pi^{-1}(Q)}\right)
\] 
holds for any $Q\in \Delta^{n-1}$. 
Thus, by Proposition~\ref{x-prop7.4}, we see that
$V_i\in {}_\alpha E$ for every $i$ since 
$0<\delta\ll 1$.

Let
\[
\{w_1,\ldots,w_r\}:=\{v_1^\vee,\ldots,v_r^\vee\}
\]
be the dual frame of
${}_{1-\alpha-\delta-\varepsilon}(E^\vee|_{\pi^{-1}(P)})$ for some
$0<\varepsilon\ll1$ (see \cite[Theorem~1.12]{fujino-fujisawa-ono}).
Arguing as in Step~\ref{x-step12.1.1}, we obtain holomorphic sections
$\{W_1,\ldots,W_r\}$ of ${}_{1-\alpha-\delta-\varepsilon}(E^\vee)$ on
$X^*(R)$ such that
\[
W_i|_{\pi^{-1}(P)}=w_i=v_i^\vee \qquad (1\le i\le r).
\]
By the argument in the proof of Claim~\ref{x-claim12.2},
$\{V_1,\ldots,V_r\}$ forms a frame of ${}_\alpha E$ on $X(R)$ for some
$0<R<1$.
In particular, ${}_\alpha E$ is locally free on $X(R)$.
\end{step}

\begin{step}\label{x-step12.1.4}
In this final step, we prove that ${}_\alpha E$ is locally free on
$\Delta^n$ for any $\alpha\in \mathbb R$.

Let $P\in \Delta^{n-1}$ be an arbitrary point.
After shifting and rescaling coordinates around $P$, we apply
Step~\ref{x-step12.1.3}.
It follows that ${}_\alpha E$ is locally free on $\Delta^n$ for every
$\alpha\in \mathbb R$.
\end{step}

We complete the proof of Theorem~\ref{x-thm12.1}.
\end{proof}

\section{Acceptable bundles on $(\Delta^*)^l\times \Delta^{n-l}$}
\label{x-sec13}

In this section, we prove Theorem~\ref{x-thm1.1}, 
which is one of the main
results of this paper, in full generality.

\medskip

Let $(E,h)$ be an acceptable vector bundle on a partially 
punctured polydisk 
\[
X^*:=(\Delta^*)^l\times \Delta^{n-l},
\]
where $l\geq2$.
As before, we set
\[
X:=\Delta^n
=\{(z_1,\ldots,z_n)\in\mathbb C^n \mid |z_i|<1 \text{ for all } i\},
\]
and
\[
X^*
=\left\{(z_1,\ldots,z_n)\in\mathbb C^n \ \middle|\
\begin{aligned}
&0<|z_i|<1 \quad \text{for } 1\le i\le l,\\
&|z_i|<1 \quad \text{for } l+1\le i\le n
\end{aligned}
\right\}.
\]
For each $1\le i\le n$, let
\[
\pi_i\colon X^*\to D_i:=\{z_i=0\}
\]
denote the natural projection.
For $1\le i\le l$, we also set
\[
D_i^\circ:=D_i\setminus \bigcup_{\substack{j\ne i\\ j\le l}} D_j.
\] 
We put $I:=\{1, \ldots, l\}$ and define 
\[
D_I:=\bigcap _{i\in I} D_i. 
\]

\medskip

We begin with the following basic lemma.

\begin{lem}\label{x-lem13.1}
Let $(E,h)$ be an acceptable vector bundle on a partially 
punctured polydisk 
$X^*=(\Delta^*)^l\times \Delta^{n-l}$.
For any $\bm a=(a_1,\ldots,a_l)\in\mathbb R^l$, define
\[
\Par(_{\bm a}E,i)
:=\Par\!\left(_{a_i}\!\left(E|_{\pi_i^{-1}(P)}\right)\right)
\]
for a point $P\in D_i^\circ$.
Then $\Par(_{\bm a}E,i)$ is independent of the choice of
$P\in D_i^\circ$, and hence is well defined.
\end{lem}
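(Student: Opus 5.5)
The plan is to reduce to Theorem~\ref{x-thm12.1} by restricting $(E,h)$ to a neighborhood of a point of $D_i^\circ$, where only one coordinate is punctured. Fix $i\in I$. For $P\in D_i^\circ$, the other coordinates $z_j$ ($j\le l$, $j\ne i$) of $P$ are nonzero. So we can choose a small polydisk $U_P\subset \Delta^n$ centered at $P$ on which $z_j\neq 0$ for all $j\le l$ with $j\ne i$. Then $U_P\setminus D$ is biholomorphic, after an affine rescaling of coordinates, to $\Delta^*\times\Delta^{n-1}$, with the punctured factor given by $z_i$.

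The first check is that $(E,h)|_{U_P\setminus D}$ is acceptable in the sense of Definition~\ref{x-def6.1} for this one-puncture model. On $U_P$, the terms of $\omega_P$ for $j\ne i$ are smooth and positive up to the closure, since $|z_j|$ is bounded away from $0$ and from $1$ there. Hence $\omega_P|_{U_P}$ is quasi-isometric to the Poincar\'e metric of the model $\Delta^*\times\Delta^{n-1}$, and the curvature bound is preserved. The only care needed is to shrink $U_P$ so that the rescaled Poincar\'e factors $(1-|w_k|^2)^{-2}$ are comparable near the center. Since acceptability is a boundedness condition, we may also take a slightly smaller polydisk.

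Next, apply Theorem~\ref{x-thm12.1} on $U_P$. It shows that $\Par_{a_i}(E|_{\pi_i^{-1}(Q)})$ is independent of $Q\in D_i\cap U_P$. Here $\pi_i^{-1}(Q)$ is only the part of the punctured disk near $z_i=0$. This does not matter, because parabolic weights are germ invariants at the puncture (Theorem~\ref{x-thm9.1}): they depend only on the behavior of $|s|_h$ as $z_i\to0$. So the function $Q\mapsto\Par(_{a_i}(E|_{\pi_i^{-1}(Q)}))$ is locally constant on $D_i^\circ$.

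Finally, $D_i^\circ\simeq(\Delta^*)^{l-1}\times\Delta^{n-l}$ is connected, and a locally constant function on a connected space is constant. This gives the lemma.

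The main obstacle is the identification of the restricted situation with the model of Theorem~\ref{x-thm12.1}. One part is the quasi-isometry of the Poincar\'e metrics after rescaling. The other part is making sure that the fiber $\pi_i^{-1}(Q)$ of the full projection and the fiber in the local model define the same prolongation germ at $z_i=0$. Both reduce to the observation that $\Par$ only sees the germ at the puncture, and that bounded smooth factors do not change the growth orders $|z_i|^{-a-\varepsilon}$.
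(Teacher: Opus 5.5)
Your proposal is correct and takes the same route as the paper, which proves the lemma in one line as following from Theorem~\ref{x-thm12.1} and its proof; your localization near each $P\in D_i^\circ$ to a one-puncture model, the germ invariance of $\Par$, and the connectedness of $D_i^\circ\simeq(\Delta^*)^{l-1}\times\Delta^{n-l}$ spell out what that remark leaves implicit. One small correction: the two Poincar\'e metrics are not quasi-isometric up to the boundary of the rescaled polydisk, but you only need the one-sided bound $\omega_{\mathrm{model}}\ge c\,\omega_P$ on $U_P\setminus D$, which does hold and is enough to transfer the curvature bound.
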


\begin{proof}[Proof of Lemma \ref{x-lem13.1}]
This follows immediately from Theorem~\ref{x-thm12.1} and 
its proof.
\end{proof}

We first consider a special case.

\begin{prop}\label{x-prop13.2} 
Let $(E,h)$ be an acceptable vector bundle on a 
partially punctured polydisk 
$X^*=(\Delta^*)^l\times \Delta^{n-l}$.
Assume that
\begin{equation}\label{x-eq13.1}
\Par({}^\diamond\!E,i)\subset \left(-\frac{1}{l},\,0\right]
\end{equation}
for every $1\le i\le l$.
Then ${}^\diamond\!E$ is locally free on $X(R)$ for some $0<R<1$.
\end{prop}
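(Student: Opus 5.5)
Restrict to the diagonal curve and extend by Ohsawa--Takegoshi, then invert using the dual bundle, exactly as in Claim~\ref{x-claim12.2} but with the curve $Y^*(R)$ of Proposition~\ref{x-prop8.5} in place of a fiber $\pi^{-1}(P)$.

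\emph{Setting up on the diagonal.} Let $\iota\colon \Delta^*\to X^*$, $z\mapsto (z,\ldots,z,0,\ldots,0)$, parametrize $Y^*(R)$. Then $\iota^*(E,h)$ is an acceptable bundle on $\Delta^*$, because $\iota^*\omega_P$ is bounded by $l$ times the Poincar\'e metric of $\Delta^*$. The key intermediate claim concerns the parabolic weights of $\iota^*E$ at ${}_0$. Suppose $s\in{}^\diamond\!E$ has weights $a_i(s)\in(-1/l,0]$ in each direction. Then $|\iota^*s|$ should grow like $|z|^{-\sum_i a_i(s)}$, and $\sum_i a_i(s)\in(-1,0]$. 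Hypothesis \eqref{x-eq13.1} is designed precisely so that these sums stay in $(-1,0]$ and do not wrap around modulo $\mathbb Z$. I would therefore aim to show that ${}_0(\iota^*E)$ has a frame $\{v_1,\ldots,v_r\}$ whose $L^2$ norms against $e^{-\psi-\phi_{\bm 0}}=|z|^{-2}$ (times a $(-\log|z|)^{-N}$ factor) are finite. This amounts to requiring $0\notin\Par({}_{0}\iota^*E)$ or, failing that, working at level $\bm\delta=(\delta,\ldots,\delta)$, as in Proposition~\ref{x-prop11.4}. Similarly, the dual frame should lie in ${}_{1-\varepsilon}(\iota^*E^\vee)$ in the appropriate sense.

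\emph{Extension.} Twist to $h(\bm\delta,N)$, which is Nakano semipositive by Corollary~\ref{x-cor6.7}, and apply Proposition~\ref{x-prop8.5} to extend each $v_k$ to $V_k$ on $X^*(R)$ with
\[
\int |V_k|^2_{h(\bm\delta,N)}e^{-\phi}\,\omega_P^n<\infty.
\]
Corollary~\ref{x-cor7.5} then puts $V_k$ in ${}_{\bm\delta+\bm\eta}E$; the extra weight $e^{-\phi}$ costs $1-1/l$ per coordinate. This loss is the point where the interval $(-1/l,0]$ must enter. Along each $\pi_i^{-1}(P)$, Theorem~\ref{x-thm12.1} identifies the restriction with ${}_{a}(E|_{\pi_i^{-1}(P)})$. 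Since all weights lie in $(-1/l,0]$ and $\delta$ is small, Proposition~\ref{x-prop7.4} (using that $\Par$ is constant along $D_i^\circ$ by Lemma~\ref{x-lem13.1}) lets me improve the estimate to $V_k\in{}^\diamond\!E$. Doing the same for $E^\vee$ gives $W_k\in{}_{\bm 1-\bm\varepsilon}(E^\vee)$ extending the dual frame.

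\emph{Inversion.} The pairing $V_i\cdot W_j$ lies in ${}_{\bm 1-\bm\varepsilon}(\mathcal O_{X^*})=\mathcal O_X$, and it equals $\delta_{ij}$ on $Y^*$, hence near the origin. Shrinking $R$ and replacing $W$ by $A^{-1}W$ as in Claim~\ref{x-claim12.2}, every $\Phi\in{}^\diamond\!E$ can be written as $\Phi=\sum_k(\Phi\cdot W_k)V_k$ with holomorphic coefficients. This shows ${}^\diamond\!E$ is free on $X(R)$.

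The main obstacle is the first step. One must relate the weights of $\iota^*E$ on the diagonal curve to the directional weights $\Par({}^\diamond\!E,i)$, and show that the $e^{-\phi}$ loss in Proposition~\ref{x-prop8.5} can be recovered through the directional estimates. I would first try to get this from Proposition~\ref{x-prop7.4} combined with a determinant (line bundle) computation on $\det E$ using Theorem~\ref{x-thm11.5}, comparing $\gamma$ of the diagonal restriction with $\sum_i\gamma$ of the directional ones. If that fails, I would work at a generic level $\bm\delta$ and pass to the limit $\delta\to 0$.
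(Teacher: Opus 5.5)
Your proposal is correct and is essentially the paper's proof. Both restrict to $Y^*$ and extend a frame of ${}^\diamond(E|_{Y^*})$ by Proposition~\ref{x-prop8.5} with a small positive twist (the paper's $\phi_{\bm\varepsilon_1/l}$ is your $\bm\delta$). Both then land in ${}_{(1-\frac{1}{l}+\delta)\bm 1}E$ by Corollary~\ref{x-cor7.5}, improve to ${}^\diamond E$ via \eqref{x-eq13.1} and Proposition~\ref{x-prop7.4}, extend the dual frame with a twist of $(1-\varepsilon_2)/l$ per coordinate so that $W_j\in{}_{(1-\varepsilon_2/l)\bm 1}(E^\vee)$, and invert as in Claim~\ref{x-claim12.2}.

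The ``main obstacle'' you flag does not exist. With the $\bm\delta$-twist, every element of ${}^\diamond(E|_{Y^*})$ is automatically square-integrable on $Y^*$ against $e^{-\psi}$. So you need no comparison of diagonal and directional weights, no determinant computation, and no limit $\delta\to 0$. Hypothesis \eqref{x-eq13.1} is used only to absorb the $(1-\frac{1}{l})$ loss coming from $e^{-\phi}$, exactly as in your extension paragraph, not to prevent sums of weights from wrapping around.
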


The proof of Proposition~\ref{x-prop13.2} closely follows the arguments
in Sections~\ref{x-sec11} and~\ref{x-sec12}.

\begin{proof}
By Theorem~\ref{x-thm12.1}, we may assume that $l\ge2$.
Set
\[
Y^*
:=\{(z_1,\ldots,z_n)\in X^* \mid
z_1=\cdots=z_l,\; z_{l+1}=\cdots=z_n=0\}.
\]
Let $\{v_1,\ldots,v_r\}$ be a frame of ${}^\diamond\!(E|_{Y^*})$.
Choose a sufficiently large positive real number $N$ such that
$h(0,N)$ is Nakano semipositive.
As in Proposition~\ref{x-prop8.5}, define
\[
\psi:=\frac{1}{l}\sum_{i=1}^l \log|z_i|^2,\qquad
\phi:=-\left(1-\frac{1}{l}\right)\sum_{i=1}^l \log|z_i|^2,
\]
and
\[
\phi_{\bm\varepsilon_1/l}
:=-\frac{\varepsilon_1}{l}\sum_{i=1}^l \log|z_i|^2,
\]
where $\varepsilon_1>0$ is sufficiently small.
Let $0<R<1$ and put $z:=z_1=\cdots=z_l$ on $Y^*$.
Then
\[
\int_{Y^*(R)} |v_j|_{h(0,N)}
e^{-\psi-\phi_{\bm\varepsilon_1/l}}
\frac{\sqrt{-1}}{2}\,dz\wedge d\overline z
<\infty
\]
for every $j$.
Hence, by Proposition~\ref{x-prop8.5}, there exist holomorphic sections
$\{V_1,\ldots,V_r\}$ of $E$ on $X^*(R)$ such that
$V_j|_{Y^*}=v_j$ and
\[
\int_{X^*(R)} |V_j|_{h(0,N)}
e^{-\phi-\phi_{\bm\varepsilon_1/l}}
\frac{\omega_P^n}{n!}
<\infty
\]
for all $j$.
By Corollary~\ref{x-cor7.5}, we obtain
\[
V_j\in {}_{\left(1-\frac{1}{l}+\frac{\varepsilon_1}{l},\ldots,
1-\frac{1}{l}+\frac{\varepsilon_1}{l}\right)}E
\]
for every $j$. 

Let $\{w_1,\ldots,w_r\} := \{v_1^\vee,\ldots,v_r^\vee\}$ be 
the dual frame of
${}_{1-\varepsilon}(E^\vee|_{Y^*})$ for some $0<\varepsilon\ll1$. 
We may assume that
$h^\vee(0,N) := h^\vee \cdot e^{-\chi(0,N)}$ is Nakano semipositive since $N$ is sufficiently large.
We put
\[
\phi_{(\bm 1-\bm \varepsilon_2)/l} := -\frac{1-\varepsilon_2}{l}
\sum_{i=1}^l
\log |z_i|^2,
\]
where $0<\varepsilon_2<\varepsilon$.
Then
\[
\int_{Y^*(R)} |w_j|_{h^\vee(0,N)}\,
e^{-\psi-\phi_{(\bm 1-\bm \varepsilon_2)/l}}
\frac{\sqrt{-1}}{2}\,dz\wedge d\overline z < \infty.
\]
Applying Proposition~\ref{x-prop8.5} and Corollary~\ref{x-cor7.5} 
once again,
we obtain holomorphic sections $\{W_1,\ldots,W_r\}$ of $E^\vee$ on $X^*(R)$
such that $W_j|_{Y^*}=w_j=v_j^\vee$ and
\[
W_j \in {}_{\left(1-\frac{1}{l}+\frac{1-\varepsilon_2}{l},\ldots,
1-\frac{1}{l}+\frac{1-\varepsilon_2}{l}\right)}(E^\vee)
\]
for every $j$.

By~\eqref{x-eq13.1} and Proposition~\ref{x-prop7.4}, we have
$V_j\in {}^\diamond\!E$ for all $j$, since $0<\varepsilon_1\ll1$.
Moreover,
\[
W_j\in {}_{\left(1-\frac{\varepsilon_2}{l},\ldots,
1-\frac{\varepsilon_2}{l}\right)}(E^\vee)
\]
for every $j$.
Arguing as in the proof of Claim~\ref{x-claim12.2}, we conclude that
$\{V_1,\ldots,V_r\}$ forms a frame of ${}^\diamond\!E$ on $X(R)$ and that
$\{W_1,\ldots,W_r\}$ is the dual frame, for some $0<R<1$.
\end{proof}

As an immediate consequence of Proposition~\ref{x-prop13.2}, we obtain
the following corollary.

\begin{cor}\label{x-cor13.3} 
Let $(E,h)$ be an acceptable vector bundle on a partially punctured
polydisk $X^*=(\Delta^*)^l\times \Delta^{n-l}$. 
Let $\bm a=(a_1, \ldots, a_l)\in \mathbb R^l$. 
Assume that
\[
\Par(_{\bm a}E,i)\subset
\left(-\frac{1}{l}+a_i,\,a_i\right]
\]
for every $1\le i\le l$.
Then ${}_{\bm a}E$ is locally free on $X$.
\end{cor}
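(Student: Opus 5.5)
The plan is to reduce to Proposition~\ref{x-prop13.2} in two moves: first pass from ${}_{\bm a}E$ to ${}^\diamond\!E$ by twisting the metric, then pass from local freeness near the origin to local freeness on all of $X$ by recentring.

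\textbf{Step 1 (twist).} Apply Lemma~\ref{x-lem6.8} with $\bm b=\bm a$ and set $(E^\dagger,h^\dagger):=(E,h(\bm a,0))$. This metric has the same curvature as $h$, so $(E^\dagger,h^\dagger)$ is again acceptable. Moreover
\[
{}_{\bm a}E={}_{\bm 0}E^\dagger={}^\diamond\!E^\dagger .
\]

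\textbf{Step 2 (parabolic weights).} We check that $E^\dagger$ satisfies hypothesis \eqref{x-eq13.1}. Fix $P\in D_i^\circ$. On the fibre $\pi_i^{-1}(P)$ the metric $h^\dagger$ equals $h$ multiplied by
\[
|z_i|^{2a_i}\prod_{j\neq i}|z_j|^{2a_j},
\]
and for $j\neq i$ the factor $|z_j|^{2a_j}$ is a positive constant along the fibre. Hence ${}_{c}\bigl(E^\dagger|_{\pi_i^{-1}(P)}\bigr)={}_{c+a_i}\bigl(E|_{\pi_i^{-1}(P)}\bigr)$, and the one-variable parabolic weights shift by $-a_i$:
\[
\Par({}^\diamond\!E^\dagger,i)=\Par({}_{\bm a}E,i)-a_i\subset\left(-\tfrac1l,0\right].
\]
Proposition~\ref{x-prop13.2} then shows that ${}_{\bm a}E$ is locally free on $X(R)$ for some $0<R<1$, that is, near the origin.

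\textbf{Step 3 (globalize).} We need local freeness near every point $Q\in X$. If $Q\in X^*$ there is nothing to prove, since ${}_{\bm a}E=E$ there. Otherwise let $J\subset\{1,\dots,l\}$ be the set of indices with $Q\in D_j$, so that $|J|=l'\ge1$.

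After translating and rescaling coordinates, a small polydisk around $Q$ becomes $(\Delta^*)^{l'}\times\Delta^{n-l'}$. The restricted bundle is still acceptable, because on a smaller polydisk the Poincar\'e metric of the new chart is comparable to the old one. The prolongation near $Q$ is ${}_{\bm a_J}E$, because near $Q$ the coordinates $z_j$ with $j\notin J$ are bounded away from $0$, so those factors play no role in the growth condition.

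Lemma~\ref{x-lem13.1} shows that the weights $\Par({}_{\bm a}E,j)$ do not depend on the point of $D_j^\circ$. Hence they lie in $(a_j-\frac1l,a_j]\subset(a_j-\frac1{l'},a_j]$, since $l'\le l$. Therefore Steps~1--2, applied with $l'$ in place of $l$, give local freeness near $Q$. (When $l'=1$ one may instead invoke Theorem~\ref{x-thm12.1} directly.)

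\textbf{Main obstacle.} The delicate point is the compatibility in Step~3 of the new chart with the objects in the hypothesis. One must check that the new chart's divisor components $D_j^\circ$ meet the original $D_j^\circ$, so that Lemma~\ref{x-lem13.1} really transfers the weights. One must also check that the comparison between the two Poincar\'e metrics is uniform, which is what keeps the twisted bundle acceptable. Both checks are routine but should be verified.
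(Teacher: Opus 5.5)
Your proposal is correct and follows the same route as the paper: reduce to $\bm a=\bm 0$ by the twist of Lemma~\ref{x-lem6.8}, apply Proposition~\ref{x-prop13.2} near the origin, and then recentre and rescale at an arbitrary point of $X\setminus X^*$. Your extra bookkeeping makes explicit what the paper's one-line ``shrinking and rescaling'' leaves implicit: the shift $\Par({}^\diamond\!E^\dagger,i)=\Par({}_{\bm a}E,i)-a_i$, and the fact that at a point on only $l'\le l$ components the hypothesis persists by Lemma~\ref{x-lem13.1} and $(-\frac1l,0]\subset(-\frac1{l'},0]$; for acceptability in the new chart you only need the one-sided bound $\omega_P^{\mathrm{new}}\ge C^{-1}\omega_P^{\mathrm{old}}$, which does hold, rather than two-sided comparability.
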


\begin{proof}
By Lemma~\ref{x-lem6.8}, we may assume that $a_i=0$ for all $i$.
Let $P\in X\setminus X^*$ be an arbitrary point.
After shrinking and rescaling coordinates around $P$, we apply
Proposition~\ref{x-prop13.2}.
It follows that ${}_{\bm a}E$ is locally free on $\Delta^n$ for every
$\bm a\in\mathbb R^l$.
\end{proof}

From now on, we study acceptable vector bundles on 
$X^*=(\Delta^*)^l \times \Delta^{n-l}$ in general. 
Lemma \ref{x-lem13.4} follows easily from Corollary \ref{x-cor13.3}.

\begin{lem}\label{x-lem13.4}
Let $(E, h)$ be an acceptable vector bundle 
on $X^*=(\Delta^*)^l\times \Delta^{n-l}$, 
and let $\eta>0$ be sufficiently small. 
Then there exists a positive integer $c$ such that
\[
\Par\bigl({}_{\bm \eta} (\psi_c^{-1}E), i\bigr) \subset (-\eta, \eta)
\] 
for each $i=1, \dots, l$, 
where $\bm \eta=(\eta, \dots, \eta)\in \mathbb R^l$ and 
$\psi_c \colon X=\Delta^n\to X=\Delta^n$ is the finite cover
defined by
\[
\psi_c(z_1, \dots, z_l, z_{l+1}, \dots, z_n) =
(z_1^c, \dots, z_l^c, z_{l+1}, \dots, z_n).
\] 
More precisely, for any positive integer $m$, we can choose $c$ divisible by $m$. 
Moreover, if $0<\eta<\frac{1}{2l}$, then ${}_{\bm \eta} (\psi_c^{-1}E)$ 
is locally free on $X$.
\end{lem}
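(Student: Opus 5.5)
The plan is to reduce everything to the one-variable statement Lemma~\ref{x-lem10.2}, applied on the curves $\pi_i^{-1}(P)$ with $P\in D_i^\circ$, and then to invoke Corollary~\ref{x-cor13.3}.

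First, fix $i\in\{1,\dots,l\}$. By Lemma~\ref{x-lem13.1}, the set $\Par({}^\diamond\!E,i)=\{a_{i,1},\dots,a_{i,r}\}\subset(-1,0]$ is finite and independent of $P\in D_i^\circ$. For $P\in D_i^\circ$, the map $\psi_c$ sends $\pi_i^{-1}(\widetilde P)$ to $\pi_i^{-1}(P)$ for a suitable preimage $\widetilde P$ of $P$. In the $z_i$-coordinate this map is the cyclic cover $z_i\mapsto z_i^c$. Moreover, $(\psi_c^{-1}E)|_{\pi_i^{-1}(\widetilde P)}$ is the pull-back of $E|_{\pi_i^{-1}(P)}$, with the pulled-back metric. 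Since $\psi_c$ is a finite unramified cover away from $D$, $\psi_c^{-1}E$ is again acceptable on $X^*$: the Poincaré metric pulls back to a metric dominated by a constant multiple of $\omega_P$, because $\psi_c^*\omega_P$ has coefficients $c^2\cdot c^{-2}$ in the punctured directions. Hence Lemma~\ref{x-lem13.1} applies to $\psi_c^{-1}E$. Lemma~\ref{x-lem10.2} with $b=\eta$ then gives
\[
\Par\bigl({}_{\bm\eta}(\psi_c^{-1}E),i\bigr)=\{\nu(ca,\eta)+ca\mid a\in\Par({}^\diamond\!E,i)\}.
\]
Every element of this set lies in $(\eta-1,\eta]$ and is congruent to $ca$ modulo $\mathbb Z$.

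Second, I choose $c$. We need each $ca$, for $a$ in the finite set $S:=\bigcup_i\Par({}^\diamond\!E,i)$, to lie within distance $\eta$ of an integer from below or above, so that its representative in $(\eta-1,\eta]$ lies in $(-\eta,\eta)$. This is simultaneous Diophantine approximation. By Dirichlet's theorem (the pigeonhole principle applied to the vectors $(\{k m a\})_{a\in S}\in[0,1)^{|S|}$ for $k=1,\dots,K^{|S|}+1$), there is a positive integer $k$ such that $\|kma\|<1/K$ for all $a\in S$, where $\|\cdot\|$ denotes distance to the nearest integer. Taking $K>1/\eta$ and $c=km$ gives $c$ divisible by $m$ with $ca\in\mathbb Z+(-\eta,\eta)$ for every $a\in S$. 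The representative of $ca$ in $(\eta-1,\eta]$ is then this element of $(-\eta,\eta)$, since $\eta-1<-\eta$ for $\eta<1/2$. This proves the first two assertions.

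Finally, suppose $0<\eta<\frac1{2l}$. Then $(-\eta,\eta)\subset(\eta-\frac1l,\eta]$, since $\eta-\frac1l<-\eta$. Corollary~\ref{x-cor13.3}, applied to the acceptable bundle $\psi_c^{-1}E$ with $\bm a=\bm\eta$, yields that ${}_{\bm\eta}(\psi_c^{-1}E)$ is locally free on $X$.

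The main obstacle I expect is the bookkeeping in the first step. One must check that the pull-back stays acceptable. One must also identify the restriction of $\psi_c^{-1}E$ to $\pi_i^{-1}(\widetilde P)$ with the pull-back of the restriction of $E$, so that Lemma~\ref{x-lem10.2} applies and the resulting parabolic weights are those of Lemma~\ref{x-lem13.1}, independent of the point. The Diophantine step is routine.
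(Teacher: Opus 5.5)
Your proposal is correct and follows essentially the same route as the paper's proof: you show $\psi_c^{-1}E$ is acceptable (the computation in fact gives $\psi_c^*\omega_P=\omega_P$ exactly), compute the weights along each $D_i$ from the curve case via Lemma~\ref{x-lem10.2}, choose $c$ divisible by $m$ by simultaneous Diophantine approximation, and conclude local freeness from Corollary~\ref{x-cor13.3}. The only difference is that you prove the Diophantine step yourself with Dirichlet's pigeonhole argument, whereas the paper cites the corresponding lemma from the earlier curve-case paper and Cassels' book.
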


\begin{proof}
Note first that $\psi_c^*\omega_P = \omega_P$. 
Hence, $\psi_c^*(E, h)$ is an acceptable vector bundle on $X^*$. 
From now on, we simply write $\psi^{-1}_cE$ to denote $\psi^*_c(E, h)$.

To analyze the parabolic weights of $\psi_c^{-1}E$ along $\{z_i=0\}$, 
it suffices to consider the case $l=1$. 
In this case, the behavior of parabolic weights under $\psi_c$ 
follows from the curve case (see Lemma \ref{x-lem10.2} and 
\cite[Section 11]{fujino-fujisawa-ono}). 

Applying Diophantine approximation 
(cf. \cite[Lemma 12.2]{fujino-fujisawa-ono} 
and \cite[Chapter I, Theorem VI]{cas}), 
we may choose a sufficiently large and divisible positive integer $c$ such that
\[
\Par\bigl({}_{\bm \eta} (\psi_c^{-1}E), i\bigr) \subset (-\eta, \eta)
\]
for all $i=1, \dots, l$. 
Moreover, \cite[Lemma 12.2]{fujino-fujisawa-ono} ensures that $c$ can be taken 
to be divisible by any given positive integer $m$. 

Finally, if $0<\eta<\frac{1}{2l}$, Corollary \ref{x-cor13.3} implies 
that ${}_{\bm \eta} (\psi_c^{-1}E)$ is locally free on $X$.
\end{proof}

The following theorem is the main result of this section. 
Although we use Lemma \ref{x-lem13.4}, which differs slightly from 
\cite[Lemma 21.7.2]{mochizuki4}, the proof of Theorem \ref{x-thm13.5} 
is essentially the same as that given 
in \cite[21.7.2.~Proof of Theorem 21.3.1]{mochizuki4}.

\begin{thm}[Prolongation by increasing orders]\label{x-thm13.5}
Let $(E, h)$ be an acceptable vector bundle on 
a partially punctured polydisk 
$X^*=(\Delta^*)^l\times \Delta^{n-l}$. 
Then, for any $\bm a\in \mathbb R^l$, ${}_{\bm a} E$ is 
a locally free sheaf on $X=\Delta^n$.
\end{thm}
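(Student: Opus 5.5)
The plan is to reduce to the locally free prolongation produced by Lemma~\ref{x-lem13.4} on a cyclic cover, and then to descend by taking invariants under the Galois group. First, by Lemma~\ref{x-lem6.8} (twisting by $\prod|z_i|^{2b_i}$), we may shift $\bm a$ by an arbitrary vector. The problem is local, so we work near a point of $D_I$ after shifting and rescaling coordinates; near points where fewer divisors meet, the same argument applies with smaller $l$, and Theorem~\ref{x-thm12.1} handles $l=1$.

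Fix $0<\eta<\frac{1}{2l}$ and choose $c$ as in Lemma~\ref{x-lem13.4}. Then $\widetilde E:=\psi_c^{-1}E$ is acceptable and ${}_{\bm\eta}\widetilde E$ is locally free on $X$. For each $\bm b\in\mathbb R^l$ we also want ${}_{\bm b}\widetilde E$ to be locally free. Since all parabolic weights of ${}_{\bm\eta}\widetilde E$ along each $D_i$ lie in $(-\eta,\eta)$, we have ${}_{\bm b}\widetilde E={}_{\bm\eta}\widetilde E\bigl(\sum_i \lfloor b_i+\eta'\rfloor D_i\bigr)$ for suitable rounding, provided the fractional parts of the $b_i$ avoid the bad window. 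To justify this we use Proposition~\ref{x-prop7.4}, which says the growth order of a section is controlled by its restrictions to the curves $\pi_i^{-1}(P)$, together with the curve-case description of the weights.

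Better still, we pick the target weight $\bm b:=c\bm a$ (up to the shift) and choose $c$ so that this works. Concretely, after replacing $\bm a$ by a shift we may take $\bm a$ with $c a_i$ close to an integer, arranged via the divisibility freedom in Lemma~\ref{x-lem13.4}. Then ${}_{c\bm a}\widetilde E$ is a twist of ${}_{\bm\eta}\widetilde E$ by an integral divisor, and hence it is locally free.

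Next we compare the two prolongations. The pullback identity $|\psi_c^*s|_{\widetilde h}=|s|_h\circ\psi_c$ and $|w_i|=|z_i|^c$ give
\[
\psi_c^*\bigl({}_{\bm a}E\bigr)\ \text{corresponds to}\ \bigl(\psi_{c*}\,{}_{c\bm a}\widetilde E\bigr)^{\mu_c^l},
\]
that is, ${}_{\bm a}E=(\psi_{c*}{}_{c\bm a}\widetilde E)^{G}$ with $G=\mu_c^{\,l}$. This identity should hold directly from the definition, since invariant sections upstairs are exactly pullbacks of sections downstairs, and the growth conditions match.

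Since $\psi_c$ is finite and flat and $G$ acts on the locally free sheaf ${}_{c\bm a}\widetilde E$ equivariantly, the invariant part of a locally free $\mathcal O_X$-module under the abelian group $G$ is locally free downstairs. To see this, we choose a $G$-equivariant frame $\widetilde u_j$ with characters $g^*\widetilde u_j=\zeta^{-\bm p_j}\widetilde u_j$, which exists because $G$ is a finite abelian group fixing the origin. The invariant sections are then generated by $\bm z^{\bm p_j}\widetilde u_j$, exactly as in Lemma~\ref{x-lem10.3}. These descend to a frame of ${}_{\bm a}E$, which proves local freeness.

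The main obstacle is the step asserting that ${}_{c\bm a}\widetilde E$ is locally free and admits an equivariant frame whose descended sections generate ${}_{\bm a}E$, rather than only a subsheaf of it. To handle this we check, fibrewise along each $\pi_i^{-1}(P)$, that the descended sections realize all the parabolic weights, using Lemma~\ref{x-lem10.3}. We then apply the argument of Claim~\ref{x-claim12.2}, pairing with a dual equivariant frame of the dual prolongation, to conclude generation.
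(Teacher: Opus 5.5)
Your overall route is the paper's: pass to the cover $\psi_c$ of Lemma~\ref{x-lem13.4}, use local freeness of ${}_{\bm\eta}(\psi_c^{-1}E)$ from Corollary~\ref{x-cor13.3}, average to get a $\mu_c^l$-equivariant frame, and descend via $\bm z^{\bm p_j}\widetilde u_j$. Your identity ${}_{\bm a}E=(\psi_{c*}\,{}_{c\bm a}\widetilde E)^{G}$ is correct, and it is cleaner than the paper's step that restrictions to each $\pi_i^{-1}(P)$ are frames, hence a frame. Once $\{\widetilde u_j\}$ is an equivariant frame of the upstairs sheaf, an invariant section $\sum f_j\widetilde u_j$ has each $f_j$ semi-invariant. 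Expanding in power series then gives $f_j\in \bm z^{\bm p_j}\,\psi_c^*\mathcal O_X$, so generation is automatic. The obstacle you single out at the end is therefore not the real one.

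The genuine gap is the step meant to make ${}_{c\bm a}\widetilde E$ locally free.
\begin{itemize}
\item The weights of $\widetilde E=\psi_c^{-1}E$ along $D_i$ lie within $\eta$ of the integers \emph{on both sides}. So ${}_{\bm b}\widetilde E$ can jump only when some $b_i$ is within $\eta$ of an integer. It is guaranteed to be an integral twist of ${}_{\bm\eta}\widetilde E$ only when every $b_i$ has fractional part in $[\eta,1-\eta]$.
\item Choosing $ca_i$ ``close to an integer'' is exactly the bad case. This includes making $c\bm a\in\mathbb Z^l$ via the divisibility in Lemma~\ref{x-lem13.4}, e.g.\ $\bm a=\bm 0$ after Lemma~\ref{x-lem6.8}. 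You would then need ${}_{\bm 0}\widetilde E$ to be locally free. In general it has weights in $(-1,-1+\eta)$, outside $(-\frac1l,0]$, so Corollary~\ref{x-cor13.3} does not apply.
\item You also cannot choose $c$ to push all weights to one side. Weights $-\theta$ and $-1+\theta$ with $\theta$ irrational become negatives of each other modulo $1$ after multiplying by $c$.
\item If ``shift'' means Lemma~\ref{x-lem6.8}, the argument is circular: the shift changes the weights that determine $c$.
\end{itemize}

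The fix is to perturb $\bm a$ rather than tune $c$.
\begin{itemize}
\item Reduce to $\bm a=\bm 0$. The weights of $E$ along $D_i$ form a finite set independent of $P\in D_i^\circ$ (Lemma~\ref{x-lem13.1}).
\item For $c$ large, none of them lies in $(-1,-1+\eta/c]$, and Proposition~\ref{x-prop7.4} gives ${}_{\bm 0}E={}_{\bm\eta/c}E$.
\item Hence ${}^\diamond\!E=(\psi_{c*}\,{}_{\bm\eta}\widetilde E)^{G}$. You should descend ${}_{\bm\eta}\widetilde E$ itself, as the paper does, and your equivariant-frame argument then completes the proof.
\end{itemize}
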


\begin{proof}[Proof of Theorem \ref{x-thm13.5}] 
We first note that, by Lemma~\ref{x-lem6.8}, we may assume without loss of
generality that $\bm a=\bm 0$.

We divide the proof into two steps.
In Step~\ref{x-step13.5.1}, we prove that
${}_{\bm a}E = {}^\diamond\!E$ is locally free on $X$.
In Step~\ref{x-step13.5.2}, we give a supplementary remark on the
parabolic filtrations; the description obtained there will be used in
the proof of Theorem~\ref{x-thm1.3}.

\setcounter{step}{0}
\begin{step}\label{x-step13.5.1}

The case $l=1$ has already been treated in Section~\ref{x-sec11}.
Hence, we assume $l\ge 2$ throughout this step.

Let $0<\eta<\frac{1}{2l}$ and set
$\bm \eta=(\eta,\dots,\eta)\in \mathbb R^l$.
We consider $X=\Delta^n$ and $X^*=(\Delta^*)^l\times \Delta^{n-l}$.
For a positive integer $c$, define
\[
\psi_c\colon X\to X, \quad
\psi_c(z_1,\dots,z_n)=(z_1^c,\dots,z_l^c,z_{l+1},\dots,z_n).
\]
We choose $c$ so that
\[
\Par\!\left({}_{\bm \eta} (\psi_c^{-1} E), i\right)
\subset (-\eta,\eta), \quad i=1,\dots,l.
\]
By Lemma~\ref{x-lem13.4}, the sheaf
${}_{\bm \eta} (\psi_c^{-1} E)$ is locally free.

Let $\mu_c=\mathbb Z/c\mathbb Z=\langle g\rangle$.
There is a natural $\mu_c^l$-action on $X$ given by
\[
(g_1,\dots,g_l)^*(z_1,\dots,z_n)
=(\zeta_1 z_1,\dots,\zeta_l z_l, z_{l+1},\dots,z_n),
\]
where $g_i$ is a generator of the $i$-th factor
$\mu_c^{(i)}$ of $\mu_c^l$ and $\zeta_i$ is a primitive $c$-th root of
unity.
This action lifts to ${}_{\bm \eta} (\psi_c^{-1} E)$, and each
$\mu_c^{(i)}$ acts on
${}_{\bm \eta} (\psi_c^{-1} E)|_{D_i}$.

We have a vector bundle decomposition
\[
{}_{\bm \eta} (\psi_c^{-1} E)|_{D_i}
= \bigoplus_{0\le p\le c-1} {}^i V_p,
\]
where $g_i$ acts on ${}^i V_p$ by multiplication by $\zeta_i^{-p}$.
As in the curve case (see \eqref{x-eq10.3}), we define a map
\[
\varphi_i\colon
\{\,0\le p\le c-1 \mid {}^i V_p\neq 0\,\}
\longrightarrow
\Par({}_{\bm \eta} (\psi_c^{-1} E), i).
\]

For $\eta-1<b\le \eta$, we define a filtration
${}^i\!F'$ of ${}_{\bm \eta} (\psi_c^{-1} E)|_{D_i}$ in the category of
vector bundles on $D_i$ by
\begin{equation}\label{x-eq13.2}
{}^i\!F'_b
:= \bigoplus_{\varphi_i(p)\le b} {}^i V_p .
\end{equation}
The collection of filtrations
$\bigl({}^i\!F'\mid i=1,\ldots,l\bigr)$
is compatible in the sense of Definition~\ref{x-def3.4},
since $\mu_c^l$ is abelian.
In particular, we obtain a vector bundle decomposition
\begin{equation}\label{x-eq13.3}
{}_{\bm \eta}\!\left(\psi_c^{-1}E\right)\big|_{D_I}
= \bigoplus_{\bm p} {}^I V_{\bm p},
\end{equation}
where $\bm p=(p_1,\ldots,p_l)\in\{0,1,\ldots,c-1\}^l$,
and $g_i$ acts on ${}^I V_{\bm p}$ by multiplication by
$\zeta_i^{-p_i}$ for each $1\le i\le l$.

We set
\[
\bm{\delta}_i :=
(\underbrace{0,\ldots,0}_{i-1},1,0,\ldots,0)\in\mathbb R^l .
\]
For $-1<b<0$, we define a subsheaf
${}_{\bm \eta+b\bm \delta_i} (\psi_c^{-1} E)'$
of ${}_{\bm \eta} (\psi_c^{-1} E)$ by
\[
{}_{\bm \eta+b\bm \delta_i} (\psi_c^{-1} E)'
:= \Ker\!\left(
\pi\colon
{}_{\bm \eta} (\psi_c^{-1} E)\longrightarrow
\frac{{}_{\bm \eta} (\psi_c^{-1} E)|_{D_i}}{{}^i\!F'_{\eta+b}}
\right),
\]
where $\pi$ is the natural morphism of $\mathcal O_X$-modules.

\begin{claim}\label{x-claim13.6}
For any $-1<b<0$, we have
\[
{}_{\bm \eta+b\bm \delta_i} (\psi_c^{-1} E)'
=
{}_{\bm \eta+b\bm \delta_i} (\psi_c^{-1} E).
\]
In particular, the parabolic filtration ${}^i\!F$ coincides with
${}^i\!F'$.
\end{claim}

\begin{proof}[Proof of Claim \ref{x-claim13.6}]
Let $f\in {}_{\bm \eta+b\bm \delta_i} (\psi_c^{-1} E)$.
Viewing $f$ as a section of ${}_{\bm \eta} (\psi_c^{-1} E)$, we set
$\overline f := \pi(f)$.
For any point $P\in D_i^\circ$, we have
$f|_{\pi_i^{-1}(P)} \in
{}_{\bm \eta}(\psi_c^{-1} E|_{\pi_i^{-1}(P)})$.
By the curve case,
\[
f(P)=f|_{\pi_i^{-1}(P)}(P)\in {}^i\!F'_{\eta+b}|_P,
\]
and hence $\overline f(P)=0$.
Since this holds for all $P\in D_i^\circ$, we obtain
$\overline f=0$ on $D_i$, which shows
$f\in {}_{\bm \eta+b\bm \delta_i} (\psi_c^{-1} E)'$.

Conversely, let
$f\in {}_{\bm \eta+b\bm \delta_i} (\psi_c^{-1} E)'$.
For any $P\in D_i^\circ$, the curve case implies
\[
\left|f|_{\pi_i^{-1}(P)}\right|_h
= O\!\left(\frac{1}{|z_i|^{\eta+b+\varepsilon}}\right)
\] 
for all $\varepsilon >0$. 
By Proposition~\ref{x-prop7.4}, this shows that
$f\in {}_{\bm \eta+b\bm \delta_i} (\psi_c^{-1} E)$.

Hence, we have the desired equality 
\[
{}_{\bm \eta+b\bm \delta_i} (\psi_c^{-1} E)' =
{}_{\bm \eta+b\bm \delta_i} (\psi_c^{-1} E),
\] 
which completes the proof of Claim \ref{x-claim13.6}.
\end{proof}

We record the following elementary observation.

\begin{claim}\label{x-claim13.7}
Let $v \in {}^I V_{\bm p}$, and let $v^\sharp$ be a holomorphic section of
${}_{\bm \eta}(\psi_c^{-1}E)$ on $X(R)$ for some $0<R<1$ such that
$v^\sharp|_{D_I}=v$.
Define
\[
v^\flat :=
\frac{1}{c^l}
\sum_{k_1=0}^{c-1}\cdots\sum_{k_l=0}^{c-1}
\zeta_1^{k_1 p_1}\cdots \zeta_l^{k_l p_l}
(g_1^{k_1},\ldots,g_l^{k_l})^* v^\sharp .
\]
Then $v^\flat|_{D_I}=v$, and $v^\flat$ is a $\mu_c^l$-equivariant
holomorphic section of ${}_{\bm \eta}(\psi_c^{-1}E)$ on $X(R)$.
\end{claim}

We now return to the proof of Theorem~\ref{x-thm13.5}.
By \eqref{x-eq13.3} and Claim~\ref{x-claim13.7}, we can choose a
$\mu_c^l$-equivariant frame
$\bm v=\{v_1,\ldots,v_r\}$ of
${}_{\bm \eta} (\psi_c^{-1} E)$
on $X(R)$ for some $0<R<1$ such that
\[
(g_1,\ldots,g_l)^* v_i
= \prod_{j=1}^l \zeta_j^{-p_j(v_i)}\, v_i
\]
for integers $0\le p_j(v_i)\le c-1$.
By construction, the frame $\bm v$ is compatible with the parabolic
filtrations ${}^j\!F$ for all $1\le j\le l$
(see Definition~\ref{x-def3.5}).

For each $i$, define
\[
\overline v_i := \prod_{j=1}^l z_j^{p_j(v_i)} \cdot v_i .
\]
Since $\overline v_i$ is $\mu_c^l$-invariant, it descends to a section of
$E$.
By the curve case (Lemma~\ref{x-lem10.3}), each $\overline v_i$ is a
section of ${}^\diamond\!E$.
Moreover, for any $P\in D_i^\circ$, the restrictions
$\overline{\bm v}|_{\pi_i^{-1}(P)}$ form a frame of
${}^\diamond\!(E|_{\pi_i^{-1}(P)})$.
It follows that $\overline{\bm v}$ is a frame of ${}^\diamond\!E$ on a
neighborhood of the origin.
Hence, ${}^\diamond\!E$ is locally free on $X(R)$ for some $0<R<1$.
As in Step~\ref{x-step12.1.4} of the proof of
Theorem~\ref{x-thm12.1}, we conclude that ${}^\diamond\!E$ is locally
free on $X$.
\end{step}

\begin{step}\label{x-step13.5.2}
In this step, we give a more direct description of the
parabolic filtrations of ${}^\diamond\!E$ in a neighborhood of the
origin.
This description will be used in the proof of
Theorem~\ref{x-thm1.3}.

As in the curve case (see \eqref{x-eq10.2} in
Section~\ref{x-sec10}), we have a map
\[
\chi_i \colon
\{\,0 \le p \le c-1 \mid {}^i V_p \neq 0\,\}
\longrightarrow
\Par({}^\diamond\!E, i).
\]
We set
\begin{equation}\label{x-eq13.4}
a_i(v_j) := \chi_i\bigl(p_i(v_j)\bigr).
\end{equation}

We define a filtration ${}^i\!F'_b$ of ${}^\diamond\!E|_{D_i}$ by vector
subbundles by
\[
{}^i\!F'_b :=
\bigl\langle \overline v_j|_{D_i} \,\big|\, a_i(v_j) \le b \bigr\rangle,
\]
that is, ${}^i\!F'_b$ is the vector subbundle of
${}^\diamond\!E|_{D_i}$ generated by those
$\overline v_j|_{D_i}$ with $a_i(v_j)\le b$.
Here
$\overline{\bm v}=\{\overline v_1,\ldots,\overline v_r\}$
denotes the local frame of ${}^\diamond\!E$ constructed in
Step~\ref{x-step13.5.1}.

For $-1<b\le 0$, we define a subsheaf
\[
{}_{b\cdot \bm \delta_i}(E)' :=
\Ker\!\left(
\pi \colon
{}^\diamond\!E \longrightarrow
\frac{{}^\diamond\!E|_{D_i}}{{}^i\!F'_b}
\right),
\]
where $\pi$ denotes the natural morphism of $\mathcal O_X$-modules.

\begin{claim}\label{x-claim13.8}
We have
\[
{}_{b\cdot \bm \delta_i} E
=
{}_{b\cdot \bm \delta_i}(E)',
\]
and consequently ${}^i\!F_b = {}^i\!F'_b$.
\end{claim}

\begin{proof}[Proof of Claim \ref{x-claim13.8}]
Let $f \in {}_{b\cdot \bm \delta_i} E$.
We regard $f$ as a section of ${}^\diamond\!E$.
For any $P\in D_i^\circ$, applying the curve case to
\[
f|_{\pi_i^{-1}(P)} \in
{}^\diamond\!(E|_{\pi_i^{-1}(P)}),
\]
we obtain
$f(P)\in {}^i\!F'_b|_P$.
Hence, $f\in {}_{b\cdot \bm \delta_i}(E)'$.

Conversely, let $f\in {}_{b\cdot \bm \delta_i}(E)'$.
By the curve case, we have
\[
f|_{\pi_i^{-1}(P)} \in {}_b(E|_{\pi_i^{-1}(P)})
\quad \text{for all } P\in D_i^\circ.
\]
Therefore, by Proposition~\ref{x-prop7.4},
we conclude that $f\in {}_{b\cdot \bm \delta_i} E$. 

Thus, we obtain 
\[
{}_{b\cdot \bm \delta_i} E = {}_{b\cdot \bm \delta_i}(E)',
\] 
and consequently ${}^i\!F_b = {}^i\!F'_b$. 
\end{proof}

By construction, ${}^i\!F'$ defines a filtration in the category of
vector bundles on $D_i$, and the tuple
$\bigl({}^i\!F' \mid i=1,\ldots,l\bigr)$
is compatible in the sense of Definition~\ref{x-def3.4}.
Hence, the same holds for ${}^i\!F$:
it defines a filtration by vector subbundles on $D_i$, and the tuple
$\bigl({}^i\!F \mid i=1,\ldots,l\bigr)$ is compatible.
\end{step}

We conclude the proof of Theorem~\ref{x-thm13.5}.
\end{proof}

We prove Theorems~\ref{x-thm1.1} and~\ref{x-thm1.3}.

\begin{proof}[Proof of Theorem~\ref{x-thm1.3}]
As in the proof of Theorem~\ref{x-thm13.5}, we may assume that
$\bm a=\bm 0$, i.e.,
${}_{\bm a}E={}^\diamond\!E$.
Let
$\overline{\bm v}=\{\overline v_1,\ldots,\overline v_r\}$
be a local frame of ${}^\diamond\!E$ on a sufficiently small
open neighborhood $U$ of the origin in $\Delta^n$,
constructed in Step~\ref{x-step13.5.1} of the proof of
Theorem~\ref{x-thm13.5}.
Then we have
\[
{}^\diamond\!E|_U
=\bigoplus_{j=1}^r \mathcal O_U \cdot \overline v_j.
\]

For $1\le j\le r$ and $1\le i\le l$, we set
\[
a_i(\overline v_j):=a_i(v_j)\in(-1,0],
\]
as in \eqref{x-eq13.4} of Step~\ref{x-step13.5.2} in the proof of
Theorem~\ref{x-thm13.5}.
By the curve case result and Proposition \ref{x-prop7.4}, 
it follows that for any
$\bm b\in\mathbb R^l$,
\[
{}_{\bm b}E|_U
=\bigoplus_{j=1}^r
\mathcal O_U\!\left(
\sum_{i=1}^l \lfloor b_i-a_i(\overline v_j)\rfloor D_i
\right)\cdot \overline v_j.
\]
This completes the proof.
\end{proof}

\begin{proof}[Proof of Theorem~\ref{x-thm1.1}]
In Theorem~\ref{x-thm13.5}, we have already shown that
${}_{\bm a}E$ is a locally free sheaf on $\Delta^n$
for any $\bm a\in\mathbb R^l$.
By Theorem~\ref{x-thm1.3}, the family
\[
\bigl({}_{\bm a}E \mid \bm a\in\mathbb R^l\bigr)
\]
naturally forms a filtered bundle in the sense of Mochizuki
(see Definitions~\ref{x-def4.1} and \ref{x-def4.2}).
This completes the proof.
\end{proof}

\section{Weak norm estimates}\label{x-sec14}

In this section, we prove the weak norm estimate stated in
Theorem~\ref{x-thm1.4}.

\begin{proof}[Proof of Theorem~\ref{x-thm1.4}]
Let $\bm v=\{v_1,\ldots,v_r\}$ be a frame of ${}_{\bm a}E$
defined in a neighborhood of the origin $0\in\Delta^n$,
which is compatible with the parabolic filtrations
\[
\mathbf F:=\bigl({}^i\!F \mid i=1,\ldots,l\bigr).
\]
See Definition~\ref{x-def3.6} for details.

For $1\le i\le l$ and $1\le j\le r$, we set
\[
a_i(v_j):={}^i\!\deg^{\mathbf F}(v_j)
=\deg^{{}^i\!F}(v_j).
\]
Define
\[
v'_j:=v_j\cdot\prod_{i=1}^l |z_i|^{a_i(v_j)},
\qquad
\bm v':=\{v'_1,\ldots,v'_r\}.
\]
By the construction of $\bm v'$ and Proposition~\ref{x-prop7.4},
there exist constants $C_1>0$ and $M_1>0$ such that
\[
H(h,\bm v')
\le
C_1\Bigl(-\sum_{i=1}^l\log|z_i|\Bigr)^{M_1} I_r.
\]

Let $\bm v^\vee=\{v_1^\vee,\ldots,v_r^\vee\}$ be the dual frame of $\bm v$.
For any point $P\in D_i^\circ$,
the restriction $\bm v^\vee|_{\pi_i^{-1}(P)}$ is a frame of
\[
{}_{-a_i+(1-\varepsilon)}E^\vee\big|_{\pi_i^{-1}(P)}
\]
for $0<\varepsilon\ll1$, compatible with the induced parabolic filtration.
Hence, for $0<\varepsilon\ll1$,
$\bm v^\vee$ defines a local frame of
\[
{}_{-\bm a+(1-\varepsilon)\bm\delta}E^\vee
\]
around the origin $0\in\Delta^n$,
compatible with the parabolic filtrations,
where
\[
\bm\delta=(1,\ldots,1)\in\mathbb R^l.
\]

By the curve case, we have
\[
{}^i\!\deg^{\mathbf F}(v_j^\vee)
=\deg^{{}^i\!F}(v_j^\vee)
=-a_i(v_j)
\]
for all $i$ and $j$.
We define
\[
(v_j^\vee)':=
v_j^\vee\cdot\prod_{i=1}^l |z_i|^{-a_i(v_j)},
\qquad
(\bm v^\vee)':=\{(v_1^\vee)',\ldots,(v_r^\vee)'\}.
\]
Applying Proposition~\ref{x-prop7.4} again,
there exist constants $C_2>0$ and $M_2>0$ such that
\[
H\bigl(h^\vee,(\bm v^\vee)'\bigr)
\le
C_2\Bigl(-\sum_{i=1}^l\log|z_i|\Bigr)^{M_2} I_r.
\]
This implies that there exist constants $C_3>0$ and $M_3>0$ such that
\[
C_3\Bigl(-\sum_{i=1}^l\log|z_i|\Bigr)^{-M_3} I_r
\le
H(h,\bm v').
\]
Combining the above estimates, we obtain the desired weak norm estimate.
\end{proof}

\section{Basic properties via reduction to curves}\label{x-sec15}

In this final section, we establish Theorems~\ref{x-thm1.5},
\ref{x-thm1.6}, and~\ref{x-thm1.7}, together with
Corollary~\ref{x-cor1.8}, by systematically reducing the statements
to the curve case.

\begin{proof}[Proof of Theorem \ref{x-thm1.5}]
We first note the inclusion
\[
{}_{-\bm a + \bm 1 - \bm \varepsilon}\left(E^\vee\right)
\subset \left({}_{\bm a} E\right)^\vee.
\]
This follows directly from the definition. 

To prove the reverse inclusion, let
$\bm v = \{v_1,\ldots,v_r\}$ be a local frame of ${}_{\bm a}E$
compatible with the parabolic filtration, and let
$\bm v^\vee = \{v_1^\vee,\ldots,v_r^\vee\}$ denote the dual frame.
For any $P \in D_i^\circ$, we consider the restriction
$v_j^\vee|_{\pi_i^{-1}(P)}$.

By the curve case results
\cite[Theorems~1.12 and~13.2]{fujino-fujisawa-ono}
together with Proposition~\ref{x-prop7.4},
we conclude that
\[
v_j^\vee \in
{}_{-\bm a + \bm 1 - \bm \varepsilon}\left(E^\vee\right)
\quad \text{for all } j.
\]
This implies
\[
\left({}_{\bm a}E\right)^\vee
\subset
{}_{-\bm a + \bm 1 - \bm \varepsilon}\left(E^\vee\right).
\]
Combining the two inclusions, we obtain the desired equality
\[
\left({}_{\bm a}E\right)^\vee
=
{}_{-\bm a + \bm 1 - \bm \varepsilon}\left(E^\vee\right).
\]
We complete the proof of Theorem \ref{x-thm1.5}. 
\end{proof}

\begin{proof}[Proof of Theorem \ref{x-thm1.6}]
By definition, we have the inclusion
\begin{equation}\label{x-eq15.1}
\sum_{\bm a_1 + \bm a_2 \leq \bm b}
{}_{\bm a_1}E_1 \otimes {}_{\bm a_2}E_2
\subset
{}_{\bm b}(E_1 \otimes E_2).
\end{equation}
Thus, it suffices to show that this inclusion is in fact an equality.

Set
\[
Y := \{ z_1 = \cdots = z_l,\; z_{l+1} = \cdots = z_n = 0 \}
\subset \Delta^n.
\]
We consider the restriction
${}_{\bm b}(E_1 \otimes E_2)|_Y$.
Applying the curve case result
\cite[Theorem~1.14]{fujino-fujisawa-ono}
to this restriction, we obtain that
the inclusion \eqref{x-eq15.1} is an equality
in a neighborhood of the origin.

The same argument applies after translating the center
to any point of
$\Delta^n \setminus (\Delta^*)^l \times \Delta^{n-l}$.
Hence, the inclusion \eqref{x-eq15.1} is an equality everywhere,
which completes the proof of Theorem~\ref{x-thm1.6}.
\end{proof}

\begin{proof}[Proof of Theorem \ref{x-thm1.7}]
Recall that
\[
\Hom(E_1, E_2) = E_1^\vee \otimes E_2
\]
is an acceptable vector bundle (see Lemma \ref{x-lem6.2}).
By definition, a section
$f \in {}_{\bm a}\!\Hom(E_1, E_2)$
satisfies the condition that
\[
f({}_{\bm k}E_1) \subset {}_{\bm a + \bm k}E_2
\quad \text{for all } \bm k \in \mathbb{R}^l.
\]

Conversely, let
$f \in \Hom(E_1, E_2)$
be a morphism satisfying
\[
f({}_{\bm k}E_1) \subset {}_{\bm a + \bm k}E_2
\quad \text{for all } \bm k \in \mathbb{R}^l.
\]
For any $P \in D_i^\circ$, we consider the restriction
$f|_{\pi_i^{-1}(P)}$.
By the curve case result
\cite[Proposition~17.1]{fujino-fujisawa-ono}
together with Proposition~\ref{x-prop7.4},
we conclude that
\[
f \in {}_{\bm a}\!\Hom(E_1, E_2).
\] 
This completes the proof. 
\end{proof}

Corollary~\ref{x-cor1.8} follows 
directly from Theorem~\ref{x-thm1.7}.

\begin{proof}[Proof of Corollary \ref{x-cor1.8}]
The assertion follows immedizately from Theorem~\ref{x-thm1.7},
since
\[
{}^\diamond\!\End(E) = {}_{\bm 0}\!\Hom(E, E).
\] 
This completes the proof. 
\end{proof}


\end{document}